\theoremstyle:=definition,remark,plain\do{%
     \expandafter\g@addto@macro\csname th@\theoremstyle\endcsname{%
        \addtolength\thm@preskip\parskip
     }%
   }
\DeclareMathAlphabet{\mathpzc}{OT1}{pzc}{m}{it} 
\DeclareFontFamily{U}{MnSymbolC}{}
\DeclareSymbolFont{mnsymbols}{U}{MnSymbolC}{m}{n}
\DeclareFontShape{U}{MnSymbolC}{m}{n}{
    <-6>  MnSymbolC5
   <6-7>  MnSymbolC6
   <7-8>  MnSymbolC7
   <8-9>  MnSymbolC8
   <9-10> MnSymbolC9
  <10-12> MnSymbolC10
  <12->   MnSymbolC12}{}
 \DeclareMathSymbol{\boxtimes}{2}{mnsymbols}{117}
\DeclareMathOperator{\boxtie}{\mathbin{\Join}}
\DeclareSymbolFont{symbolsA}{U}{txsya}{m}{n}
\DeclareSymbolFont{symbolsC}{U}{txsyc}{m}{n}
\DeclareMathSymbol{\multimap}{\mathrel}{symbolsA}{40}
\DeclareMathSymbol{\multimapinv}{\mathrel}{symbolsC}{18}
\swapnumbers \theoremstyle{plain}
\newtheorem{lem}[subsubsection]{Lemma}
\newtheorem{prop}[subsubsection]{Proposition}
\newtheorem{thm}[subsubsection]{Theorem}
\newtheorem*{thm*}{Theorem}
\newtheorem{cor}[subsubsection]{Corollary}
\theoremstyle{definition}
\newtheorem{ex}[subsubsection]{Example}
\newtheorem{exs}[subsubsection]{Examples}
\newtheorem{rem}[subsubsection]{Remark}
\newtheorem{rems}[subsubsection]{Remarks}
\titleformat{\subsection}[runin]{\normalfont\normalsize\bfseries}{\thesubsection}{0.4em}{}[.]
\titleformat{\subsubsection}[runin]{\normalfont\normalsize\bfseries}{\thesubsubsection}{0.4em}{}[.]
\titleformat{\paragraph}[runin] {\normalfont\normalsize\bfseries}{\theparagraph}{0.4em}{}[.]
\titleformat{\subparagraph}[runin] {\normalfont\normalsize\bfseries}{\thesubparagraph}{0.4em}{}[.]
\titlespacing*{\subsection} {0pt}{3.25ex plus 1ex minus .2ex}{0.4em} 
\titlespacing*{\subsubsection}{0pt}{3.25ex plus 1ex minus .2ex}{0.4em} 
\titlespacing*{\paragraph} {0pt}{3.25ex plus 1ex minus .2ex}{0.4em} 
\titlespacing*{\subparagraph} {\parindent}{3.25ex plus 1ex minus .2ex}{0.4em}
\newenvironment{examples}[1][\mbox{}]
{\begin{exs}#1\vspace{-.3em}\begin{enumerate}[label=(\arabic{*})]}{\end{enumerate}\end{exs}}
\newenvironment{remarks}[1][\mbox{}]
{\begin{rems}#1\vspace{-.3em}\begin{enumerate}}{\end{enumerate}\end{rems}}
\renewcommand{\to}{\xrightarrow{\rule{1.45ex}{0ex}}} 
\newcommand{\dand}{\qquad\text{and}\qquad} 
\newcommand{\df}{\emph} 
\renewcommand{\epsilon}{\varepsilon}
\newcommand{\inv}{{-1}} 
\newcommand{\ob}{\mathop{\mathrm{ob}}\nolimits} 
\newcommand{\op}{\mathrm{op}} 
\newcommand{\V}{V} 
\newcommand{\cat}[1]{\mathsf{#1}}
\newcommand{\cC}{\cat{C}} 
\newcommand{\cX}{\cat{X}} 
\newcommand{\cY}{\cat{Y}} 
\newcommand{\cZ}{\cat{Z}} 
\newcommand{\AbGrp}{\cat{AbGrp}} 
\newcommand{\Mod}{\cat{Mod}} 
\newcommand{\Mon}{\cat{Mon}} 
\newcommand{\Qnt}{\cat{Qnt}} 
\newcommand{\Rng}{\cat{Rng}} 
\newcommand{\Set}{\cat{Set}} 
\newcommand{\Sup}{\cat{Sup}} 
\newcommand{\mon}[1]{\mathbb{#1}}
\newcommand{\mAb}{{\mon{A}}} 
\newcommand{\mId}{\mon{I}} 
\newcommand{\mP}{\mon{P}} 
\newcommand{\mS}{\mon{S}} 
\newcommand{\mT}{\mon{T}} 
\begin{document}

\title{Tensors, monads and actions}


\author{Gavin J. Seal}

\date{June 22, 2013}



\maketitle

\begin{center}
\large Dedicated to the memory of Pawe{\l} Waszkiewicz
\end{center}

\bigskip

\begin{abstract}
We exhibit sufficient conditions for a monoidal monad $\mT$ on a monoidal category $\cC$ to induce a monoidal structure on the Eilenberg--Moore category $\cC^\mT$ that represents bimorphisms. The category of actions in $\cC^\mT$ is then shown to be monadic over the base category $\cC$.
\end{abstract}


\nocite{AdaHerStr:90}

\setcounter{section}{-1}
\section{Introduction}

The original motivation for the current work stemmed from the observation that the category $\cC^M$ of actions of a monoid $M$ in a monoidal category $(\cC,\otimes,E)$ is \emph{monadic} over its base (see Proposition~\ref{prop:ActionEM} below). When the base monoidal category is itself an Eilenberg--Moore category $\cC^\mT$, the composition of the forgetful functors $(\cC^\mT)^M\to\cC^\mT$ and $\cC^\mT\to\cC$ is monadic again in classical examples: the category of $R$-modules, seen as a category of $R$-actions in the category $\Set^\mAb\cong\AbGrp$ of abelian groups, is monadic over $\Set$, and the category of actions of an integral quantale in the category $\Set^\mP\cong\Sup$ of sup-semilattices is monadic over $\Set$ \cite{PedTho:89}. These instances suggest the following underlying principle:
\begin{quote}
The category $(\cC^\mT)^M$ of actions of a monoid $M$ in a monoidal Eilenberg--Moore category $\cC^\mT$ is monadic over $\cC$.
\end{quote}
In order to \emph{define} actions in $\cC^\mT$, we first need a tensor $(-\boxtimes-)$ on $\cC^\mT$ that  encodes the ``bilinear'' nature of the action morphism $M\boxtimes X\to X$. The providential structure is provided by a \emph{monoidal monad} on $(\cC,\otimes,E)$ that allows the introduction of morphisms in $\cC$ that are ``$\mT$-algebra homomorphisms in each variable'', as originally suggested in \cite{Lin:69}.

Let us say a word on the technical setting we adopted for this work. In \cite{Koc:70,Koc:71b,Koc:71a,Koc:72}, Kock presents the fundamentals of symmetric monoidal monads in a context of closed categories. However, closedness does not appear to play an explicit role in the classical construction of the tensor product on $\AbGrp$ or $\Sup$. It also seems reasonable to aim for an action morphism that occurs as an algebraic structure $M\otimes A\to A$ on $A$, rather than as a morphism of monoids $M\to[A,A]$ (where $[-,-]$ would designate an internal hom). Hence, we chose to follow \cite{Gui:80} and consider a base category whose monoidal structure is neither assumed to be symmetric, nor closed. Alas, the result we needed in \textit{op.cit.} is presented with a somewhat obscure proof (in particular, the proposed construction of the unit isomorphism in $\cC^\mT$ seems a bit brusque). This lead us to our current version of Theorem~\ref{thm:BoxtimesMonoidal} that, in turn, provided the necessary ingredients to prove the sought result in Theorem~\ref{thm:ActionsAreMonadic}. We note that the hypotheses of Theorem~\ref{thm:BoxtimesMonoidal} involve conditions on the tensor $(-\boxtimes-)$ that might not be practical to verify, as the latter is realized via a coequalizer. To remedy this, Theorem~\ref{thm:BoxtimesMonoidal:practical} proposes hypotheses that can be tested directly on the base category in the non-closed case (to be compared with Corollary~\ref{cor:thm:BoxtimesMonoidal}).

Our work is thus structured as follows. In Section \ref{sec:Basics}, we establish the relevant definitions and notations pertaining to monoidal monads. We also recall that these are fundamentally linked to monoidal structures of Kleisli categories. In Section \ref{sec:Monoidal}, we recall how bimorphisms and the tensor on $\cC^\mT$ induced by a monoidal monad $\mT$ are related. Proposition~\ref{prop:CoeqEqGuitart} then exhibits the link between the tensor proposed in \cite{Lin:69} with the one studied in \cite{Gui:80}. We also review some useful facts about reflexive coequalizers, and recall in \ref{conv:Tensor} that the tensor in $\cC^\mT$ of free algebras is the free algebra of their tensor in $\cC$ (see \cite[Proposition~21]{Gui:80}). This observation is crucial to establish that the tensor on $\cC^\mT$ is associative, a fact that is proved in Theorem~\ref{thm:BoxtimesMonoidal}, and again in Theorem~\ref{thm:BoxtimesMonoidal:practical} with alternative hypotheses. We then consider algebraic functors $\cC^\mT\to\cC^\mS$ induced by monoidal monad morphisms $\mS\to\mT$, and show that they are themselves monoidal. Once $\cC^\mT$ is equipped with the adequate monoidal structure, we turn our attention to actions in Section~\ref{sec:Actions}. Our main result is, as mentioned above, that the monadic functors $(\cC^\mT)^M\to\cC^\mT$ and $\cC^\mT\to\cC$ compose to form yet another monadic functor $(\cC^\mT)^M\to\cC$. We conclude by showing that the classical restriction-of-scalars functor between categories of modules is algebraic.

Throughout the text, we illustrate the various notions introduced with the classical examples mentioned above, that is, with structures related to the free abelian group and the powerset monads. Example~\ref{ex:BoxtimesMonoidal:practical} also demonstrates that binary coproducts in $\cC^\mT$ can be interpreted as the tensor induced by binary coproducts in $\cC$. Of course, these examples are far from being exhaustive, but we feel that they adequately represent the concepts developed, while hinting at further applications. 


\section{Basic structures}\label{sec:Basics}

\subsection{Monoidal categories}
Let $\cC$ be a monoidal category, with its tensor denoted by $(-\otimes-):\cC\times\cC\to\cC$, its unit by $E$, and its structure natural isomorphisms by
\[
\alpha_{X,Y,Z}:(X\otimes Y)\otimes Z\to X\otimes(Y\otimes Z)\ ,\quad\lambda_X:E\otimes X\to X\ ,\quad\rho_X:X\otimes E\to X\ .
\]
When the monoidal category $\cC$ is symmetric, we denote by $\sigma_{X,Y}:X\otimes Y\to X\otimes Y$ the components of its braiding natural isomorphisms (see \cite{Mac:71} or \cite{Bor:94b}). We will denote a monoidal category $(\cC,\otimes,\alpha,E,\lambda,\rho)$ or $(\cC,\otimes,\alpha,E,\lambda,\rho,\sigma)$ more briefly by $(\cC,\otimes,E)$.

\begin{ex}
The monoidal category recurrent in most of our examples is the category $(\Set,\times,\{\star\})$ of sets and maps with its cartesian structure.
\end{ex}

\subsection{Monoidal monads}
Let $(\cC,\otimes,E)$ be a monoidal category. A functor $T:\cC\to\cC$, with a map $\eta_E:E\to TE$, and a family of maps $\kappa=(\kappa_{X,Y}:TX\otimes TY\to T(X\otimes Y))_{X,Y\in\ob\cC}$ natural in $X$ and $Y$, is \df{monoidal} if $(T,\eta_E,\kappa)$ is compatible with the associativity and unitary natural transformations of $(\cC,\otimes,E)$, so that one has for all $X,Y,Z\in\ob\cC$:

\begin{enumerate}
\item\label{cond:0} $\kappa_{X,Y\otimes Z}\cdot(1_{TX}\otimes\kappa_{Y,Z})\cdot\alpha_{TX,TY,TZ}=T\alpha_{X,Y,Z}\cdot\kappa_{X\otimes Y,Z}\cdot(\kappa_{X,Y}\otimes 1_{TZ})$, that is, the diagram
\[
\xymatrix{(TX\otimes TY)\otimes TZ\ar[d]_{\alpha}\ar[r]^-{\kappa\otimes1}&T(X\otimes Y)\otimes TZ\ar[r]^-{\kappa}&T((X\otimes Y)\otimes Z)\ar[d]^{T\alpha}\\
TX\otimes(TY\otimes TZ)\ar[r]^-{1\otimes\kappa}&TX\otimes T(Y\otimes Z)\ar[r]^-{\kappa}&T(X\otimes(Y\otimes Z))}
\]
commutes;
\item\label{cond:1} $T\lambda_{X}\cdot\kappa_{E,X}\cdot(\eta_E\otimes1_{TX})=\lambda_{TX}$ and $T\rho_{X}\cdot\kappa_{X,E}\cdot(1_{TX}\otimes\eta_E)=\rho_{TX}$, that is, the diagrams
\[
\xymatrix{E\otimes TX\ar[dr]_{\lambda}\ar[r]^-{\eta_E\otimes 1}&TE\otimes TX\ar[r]^-{\kappa}&T(E\otimes X)\ar[dl]^-{T\lambda}\\
&TX&}
\xymatrix{\mbox{}\ar@{}[d]|{\textstyle{\text{and}}}\\
\mbox{}}
\xymatrix{TX\otimes E\ar[dr]_{\rho}\ar[r]^-{1\otimes\eta_E}&TX\otimes TE\ar[r]^-{\kappa}&T(X\otimes E)\ar[dl]^-{T\rho}\\
&TX&}
\]
commute.
\end{enumerate}

A monad $\mT=(T,\mu,\eta)$ is \df{monoidal} if the functor $T:\cC\to\cC$ comes with a family of maps $\kappa=(\kappa_{X,Y}:TX\otimes TY\to T(X\otimes Y))_{X,Y\in\ob\cC}$, natural in $X$ and $Y$, that make $(T,\kappa,\eta_E)$ a monoidal functor, and such that the following conditions are satisfied for all $X,Y\in\ob\cC$:

\begin{enumerate}
\setcounter{enumi}{2}
\item\label{cond:3} $\mu_{X\otimes Y}\cdot T\kappa_{X,Y}\cdot\kappa_{TX,TY}=\kappa_{X,Y}\cdot(\mu_X\otimes\mu_Y)$, that is, the diagram
\[ 
\xymatrix@C=4em{TTX\otimes TTY\ar[r]^-{T\kappa\cdot\kappa}\ar[d]_-{\mu\otimes\mu}&TT(X\otimes Y)\ar[d]^-{\mu}\\
TX\otimes TY\ar[r]^{\kappa}&T(X\otimes Y)}
\]
commutes;
\item\label{cond:compatibility} $\kappa_{X,Y}\cdot(\eta_X\otimes\eta_Y)=\eta_{X\otimes Y}$, that is, the diagram
\[
\xymatrix{&X\otimes Y\ar[dl]_{\eta\otimes\eta}\ar[dr]^{\eta}&\\
TX\otimes TY\ar[rr]^-{\kappa}&&T(X\otimes Y)}
\]
commutes.
\end{enumerate}

In the case where $(\cC,\otimes,E)$ is symmetric, the monoidal monad $(\mT,\kappa)$ is itself \df{symmetric} if the following condition is satisfied for all $X,Y\in\ob\cC$:
\begin{enumerate}
\setcounter{enumi}{4}
\item\label{cond:sym} $\kappa_{Y,X}\cdot\sigma_{TX,TY}=T\sigma_{X,Y}\cdot\kappa_{X,Y}$, that is, the diagram
\[
\xymatrix{TX\otimes TY\ar[d]_{\sigma}\ar[r]^{\kappa}&T(X\otimes Y)\ar[d]^{T\sigma}\\
TY\otimes TX\ar[r]^{\kappa}&T(Y\otimes X)}
\]
commutes.
\end{enumerate}

\begin{examples}\label{ex:MonoidalMonads}
\item The identity monad $\mId=(1_\cC,1,1)$ on a monoidal category $\cC$ is a monoidal monad via the identity natural transformation $(1_{X\otimes Y}:X\otimes Y\to X\otimes Y)_{X,Y\in\ob\cC}$. The monoidal monad is symmetric whenever $\cC$ is.
\item\label{ex:MonoidalMonads:Ab} The free abelian group monad $\mAb=(A,\sum,(-))$ on $(\Set,\times,\{\star\})$ is a symmetric monoidal monad via the natural transformation $\kappa$ whose component $\kappa_{X,Y}:AX\times AY\to A(X\times Y)$ sends a pair $\big(\sum_{x\in X}n_x\cdot x,\sum_{y\in Y}n_y\cdot y\big)$ (with all coefficients $n_i$ integers) to the element $\sum_{x\in X,y\in Y}(n_x+n_y)\cdot(x,y)$.
\item\label{ex:MonoidalMonads:P} The powerset monad $\mP=(P,\bigcup,\{-\})$ on $\Set$ (with its cartesian structure) is a symmetric monoidal monad via the natural transformation $\iota$ whose component $\iota_{X,Y}:PX\times PY\to P(X\times Y)$ sends a pair of subsets $(A,B)$ to their product $A\times B\subseteq X\times Y$.
\item\label{ex:MonoidalMonads:coprod} Any monad $\mT$ on a category $\cC$ whose monoidal structure is given by finite coproducts (so $\otimes=+$ and $E=\varnothing$ is the initial object in $\cC$) is monoidal with respect to the connecting $\cC$-morphisms $\kappa_{A,B}:TA+TB\to T(A+B)$.
\end{examples}

Monoidal monads correspond to monoidal structures of the Kleisli category, as follows.

\begin{prop}\label{prop:MonoidalLift}
Given a monad $\mT$ on a monoidal category $\cC$, there is a bijective correspondence between the following data:
\begin{enumerate}
\item families $\kappa=(\kappa_{X,Y}:TX\otimes TY\to T(X\otimes Y))_{X,Y\in\ob\cC}$ of $\cC$-morphisms natural in $X$ and $Y$ making $\mT$ a monoidal monad;
\item monoidal structures on the Kleisli category $\cC_\mT$ such that the left adjoint functor $F_\mT:\cC\to\cC_\mT$ is strict monoidal.
\end{enumerate}
Moreover, $(\mT,\kappa)$ is symmetric precisely when the corresponding monoidal structure on $\cC_\mT$ is symmetric.
\end{prop}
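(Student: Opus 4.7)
The plan is to construct maps in both directions and check they are mutually inverse. Write $F_\mT:\cC\to\cC_\mT$ for the standard left adjoint, which is the identity on objects and sends $f:X\to Y$ to $\eta_Y\cdot f:X\to TY$. Since we require $F_\mT$ to be strict monoidal, the tensor on $\cC_\mT$ must agree with that of $\cC$ on objects and must send $F_\mT f\otimes_\mT F_\mT g$ to $F_\mT(f\otimes g)$; in particular the unit object is forced to be $E$, and the associator and unitors in $\cC_\mT$ must be the $F_\mT$-images of those in $\cC$.

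For (1)$\Rightarrow$(2), given $\kappa$ satisfying \ref{cond:0}--\ref{cond:compatibility}, I define the tensor on Kleisli morphisms $f:X\to TX'$, $g:Y\to TY'$ by
\[
f\otimes_\mT g \;:=\; \kappa_{X',Y'}\cdot(f\otimes g):X\otimes Y\to T(X'\otimes Y').
\]
Functoriality of $\otimes_\mT$ splits into two checks: preservation of Kleisli identities $\eta_X\otimes_\mT\eta_Y=\eta_{X\otimes Y}$, which is exactly condition \ref{cond:compatibility}; and preservation of Kleisli composition, which after expanding Kleisli composition $g\kleisli f=\mu\cdot Tg\cdot f$ reduces, via naturality of $\kappa$, to condition \ref{cond:3}. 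I then promote the associator and unitors of $\cC$ to $\cC_\mT$ by applying $F_\mT=\eta_{-}\cdot(-)$; naturality in $\cC_\mT$ and the triangle/pentagon coherence axioms in $\cC_\mT$ then boil down to diagrams in $\cC$ that are either the $\cC$-coherence axioms or one of \ref{cond:0} and \ref{cond:1}, so strict monoidality of $F_\mT$ follows by construction.

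For (2)$\Rightarrow$(1), suppose $\cC_\mT$ is equipped with a monoidal structure making $F_\mT$ strict monoidal. Viewing $1_{TX}:TX\to TX$ as the Kleisli morphism $TX\kmodto X$, define
\[
\kappa_{X,Y}\;:=\;1_{TX}\otimes_\mT 1_{TY}\quad\text{in}\quad\cC_\mT,
\]
i.e., the underlying $\cC$-morphism $TX\otimes TY\to T(X\otimes Y)$ of the Kleisli tensor of the two canonical ``counits''. Naturality of $\kappa$ in $X$ and $Y$ follows from functoriality of $\otimes_\mT$ applied to $F_\mT$-images. The monoidal monad axioms then translate one by one: \ref{cond:compatibility} follows from $F_\mT$ being strict monoidal (so $F_\mT(1_{X\otimes Y})=F_\mT(1_X)\otimes_\mT F_\mT(1_Y)$, i.e. $\eta_{X\otimes Y}=\kappa\cdot(\eta_X\otimes\eta_Y)$); \ref{cond:3} follows by computing the Kleisli composite $(1_{TX}\otimes_\mT 1_{TY})\kleisli(1_{TTX}\otimes_\mT 1_{TTY})$ in two ways and using that $\mu$ is the Kleisli composite $1_{TX}\kleisli 1_{TX}$; and \ref{cond:0}, \ref{cond:1} follow from naturality of the Kleisli associator and unitors against the Kleisli morphisms $1_{TX}$, combined with strict monoidality of $F_\mT$ on $\eta$'s.

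To see the two constructions are mutually inverse, observe that starting from $\kappa$, plugging $f=1_{TX}$ and $g=1_{TY}$ into the formula $f\otimes_\mT g=\kappa\cdot(f\otimes g)$ recovers $\kappa$; conversely, starting from a Kleisli monoidal structure, the identity $f\otimes_\mT g=(1_{TX'}\otimes_\mT 1_{TY'})\kleisli(F_\mT f\otimes_\mT F_\mT g)$, combined with $F_\mT$'s strict monoidality, forces $f\otimes_\mT g=\kappa_{X',Y'}\cdot(f\otimes g)$. Finally, for the symmetric case, the braiding of $\cC_\mT$ is forced to be the $F_\mT$-image of $\sigma$, and naturality of this braiding against the Kleisli morphisms $1_{TX},1_{TY}$ is exactly condition \ref{cond:sym}. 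The main obstacle is bookkeeping: each axiom of a monoidal monad must be matched against the corresponding coherence axiom of $\cC_\mT$ through the nontrivial translation between $\cC$-composition and Kleisli composition, and care is needed to verify that strict monoidality of $F_\mT$ uniquely determines $\otimes_\mT$ on arbitrary Kleisli arrows rather than merely on $F_\mT$-images.
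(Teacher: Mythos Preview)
Your proposal is correct and follows exactly the same approach as the paper: both directions use the same constructions $f\otimes_\mT g:=\kappa_{Z,W}\cdot(f\otimes g)$ and $\kappa_{X,Y}:=1_{TX}\otimes_\mT 1_{TY}$, and your round-trip argument matches the paper's implicit one. The only difference is that the paper states the constructions and defers the axiom-by-axiom verification to a reference (Guitart), whereas you spell out which monoidal-monad condition corresponds to which piece of the Kleisli monoidal structure.
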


\begin{proof}
Given a $\cC$-morphism $\kappa_{Z,W}$, and $\cC$-morphisms $f:X\to TZ$, $g:Y\to TW$, one can define the $\cC$-morphism $f\otimes_\mT g:=\kappa_{Z,W}\cdot(f\otimes g):X\otimes Y\to T(Z\otimes W)$, thus equipping $\cC_\mT$ with a tensor $\otimes_\mT$ for which $F_\mT$ is strict monoidal. Conversely, if $\cC_\mT$ is monoidal with a tensor $\otimes_\mT$, then strict monoidality of $F_\mT$ forces equality $X\otimes_\mT Y=X\otimes Y$ for all $\cC$-objects $X$ and $Y$, and one can define $\kappa_{X,Y}:=1_{TX}\otimes_\mT 1_{TY}$. (See for example~\cite[Proposition 8]{Gui:80}.)
\end{proof}

\subsection{Monoidal monad morphisms}
Let $(\mS,\iota)$ and $(\mT,\kappa)$ be monoidal monads on a monoidal category $(\cC,\otimes,E)$. A monad morphism $\phi:\mS\to\mT$ is \df{monoidal} if, for all $X,Y\in\ob\cC$, the equality $\phi_{X\otimes Y}\cdot\iota_{X,Y}=\kappa_{X,Y}\cdot(\phi_X\otimes\phi_Y)$ holds, or equivalently, the diagram
\[
\xymatrix{SX\otimes SY\ar[r]^-{\iota}\ar[d]_{\phi\otimes\phi}&S(X\otimes Y)\ar[d]^{\phi}\\
TX\otimes TY\ar[r]^-{\kappa}&T(X\otimes Y)}
\]
commutes.

\begin{ex}\label{ex:MonoidalMonadsMorph}
For any monoidal monad $(\mT,\kappa)$ on $(\cC,\otimes,E)$, the unit $\eta:(\mId,1)\to(\mT,\kappa)$ is a monoidal monad morphism.
\end{ex}

The correspondence of Proposition~\ref{prop:MonoidalLift} extends to morphisms.

\begin{prop}\label{prop:MonoidalMorphismLift}
Given a monad $\mT$ on a monoidal category $\cC$, there is a bijective correspondence between the following data:
\begin{enumerate}
\item monoidal monad morphisms $\phi:\mS\to\mT$;
\item strict monoidal functors $L:\cC_\mS\to\cC_\mT$ that commute with the left adjoint functors from $\cC$:
\[
\xymatrix{\cC_\mS\ar[rr]^{L}&&\cC_\mT\\
&\cC\ar[ul]^{F_\mS}\ar[ur]_{F_\mT}\rlap{\ .}&}
\]
\end{enumerate}
\end{prop}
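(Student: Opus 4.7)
The plan is to piggyback on the standard correspondence between monad morphisms $\phi:\mS\to\mT$ and functors $L:\cC_\mS\to\cC_\mT$ satisfying $LF_\mS=F_\mT$, and then to show that this correspondence restricts cleanly to the monoidal level. In one direction, given $\phi$, I would set $L(X):=X$ on objects and $L(f):=\phi_Y\cdot f$ for a Kleisli morphism $f:X\to SY$; functoriality and the identity $LF_\mS=F_\mT$ follow from the standard monad morphism axioms (naturality of $\phi$ together with compatibility with $\eta$ and $\mu$). In the other direction, given $L$ with $LF_\mS=F_\mT$, I would recover $\phi_X:SX\to TX$ as the $\cC$-morphism underlying $L(1_{SX})$, where $1_{SX}:SX\to SX$ in $\cC$ is regarded as the $\cC_\mS$-morphism $SX\to X$. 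A routine check shows that these two assignments are mutually inverse.

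It then remains to verify that, under this bijection, $\phi$ is a monoidal monad morphism precisely when $L$ is strict monoidal. By Proposition~\ref{prop:MonoidalLift}, the tensors $\otimes_\mS$ and $\otimes_\mT$ agree with $\otimes$ on objects, and both monoidal units are the $\cC$-unit $E$; consequently the object-level strictness and the preservation of the unit object are automatic from $L(X)=X$. Moreover, since $F_\mS$ is strict monoidal, the structure isomorphisms of $\cC_\mS$ are nothing but the $F_\mS$-images of $\alpha,\lambda,\rho$, and similarly for $\mT$; the equation $LF_\mS=F_\mT$ thus immediately forces $L$ to preserve the associator and the unitors.

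The substantive step is the compatibility of $L$ with the tensor on morphisms. Unfolding the definitions, for $f:X\to SZ$ and $g:Y\to SW$ in $\cC_\mS$, I would compute
\[
L(f\otimes_\mS g)=\phi_{Z\otimes W}\cdot\iota_{Z,W}\cdot(f\otimes g)
\dand
L(f)\otimes_\mT L(g)=\kappa_{Z,W}\cdot(\phi_Z\otimes\phi_W)\cdot(f\otimes g).
\]
The equality of these two $\cC$-morphisms for every $f$ and $g$ is then equivalent to the single equation
\[
\phi_{Z\otimes W}\cdot\iota_{Z,W}=\kappa_{Z,W}\cdot(\phi_Z\otimes\phi_W)
\]
defining monoidality of $\phi$: necessity is obtained by specializing to $f=1_{SZ}$ and $g=1_{SW}$, while sufficiency follows by post-composing with $f\otimes g$. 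The main obstacle is not any deep calculation but the bookkeeping of distinguishing Kleisli composition and tensor from those in $\cC$; once this is carefully tracked, the verification is essentially forced by Proposition~\ref{prop:MonoidalLift}.
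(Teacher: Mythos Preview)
Your proposal is correct and follows essentially the same approach as the paper's own proof: both invoke the standard bijection between monad morphisms $\phi$ and functors $L:\cC_\mS\to\cC_\mT$ commuting with the free functors (with $L(f)=\phi_Y\cdot f$ and $\phi_X=L(1_{SX})$), and then reduce the monoidal condition on $\phi$ to strict monoidality of $L$. The paper merely asserts that ``one easily verifies'' this last equivalence, whereas you spell out the computation $L(f\otimes_\mS g)=\phi_{Z\otimes W}\cdot\iota_{Z,W}\cdot(f\otimes g)$ versus $L(f)\otimes_\mT L(g)=\kappa_{Z,W}\cdot(\phi_Z\otimes\phi_W)\cdot(f\otimes g)$ and the specialization to $f=1_{SZ}$, $g=1_{SW}$; this is exactly the expected unpacking.
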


\begin{proof}
The one-to-one correspondence between monad morphisms and functors between the Kleisli categories is standard (see for example \cite[Theorem~2.2.2]{ManMul:07}): a monad morphism $\phi:\mS\to\mT$ defines a functor $L:\cC_\mS\to\cC_\mT$ that is the identity on objects and sends a $\cC_\mS$-morphism $f:X\to SY$ to the $\cC_\mT$-morphism $\phi_Y\cdot f:X\to TY$; conversely, a functor $L$ as in (2) defines a monad morphism $\phi:\mS\to\mT$ via its components $\phi_X:=L(1_{SX}):SX\to TY$. One easily verifies that if $\phi$ is monoidal, then $L$ is strict monoidal, and that the converse holds, too. 
\end{proof}


\section{The monoidal structure of $\cC^\mT$}\label{sec:Monoidal}

The prototypical tensor product that we wish to study is provided by the tensor product of $R$-modules. The role of this tensor is to represent bilinear maps. In our setting, the monoidal structure of the monad allows the introduction of the notion of such a ``morphism in each variable'' (as suggested in \cite{Lin:69} and \cite{Koc:71b}).

\subsection{Bimorphisms}\label{ssec:Bim}
Let $(\mT,\kappa)$ be a monoidal monad on a monoidal category $(\cC,\otimes,E)$, and denote by $\cC^\mT$ its category of Eilenberg--Moore algebras. For $\mT$-algebras $(A,a)$, $(B,b)$, and $(C,c)$, we say that a $\cC$-morphism $f:A\otimes B\to C$ is a \df{$\cC^\mT$-bimorphism}\footnote{These bimorphisms should not be confused with the $\cC$-morphisms that are at the same time monic and epic. Since we do not consider the latter, there is little reason to avoid the bimorphism terminology.} (or simply a \df{bimorphism}) if the diagram
\begin{equation}\label{eq:bimorphism}
\xymatrix{TA\otimes TB\ar[d]_{a\otimes b}\ar[r]^-{\kappa}&T(A\otimes B)\ar[r]^-{Tf}&TC\ar[d]^{c}\\
A\otimes B\ar[rr]^-{f}&&C}
\end{equation}
commutes. The set of all bimorphisms $f:(A,a)\otimes (B,b)\to(C,c)$ is denoted by $\cC^\mT(A,B;C)$. 

\begin{examples}
\item For the identity monad $\mId=(1_\cC,1,1)$ on a monoidal category $\cC$, with the identity natural transformation, $\cC^\mId$-bimorphisms are just $\cC$-morphisms since $\cC^\mId\cong\cC$.
\item For the free abelian group monad $\mAb$, the category $\Set^\mAb$ is isomorphic to $\AbGrp$, the category of abelian groups. Via the natural transformation $\kappa$ of Example \ref{ex:MonoidalMonads}\ref{ex:MonoidalMonads:Ab}, a $\Set^\mAb$-bimorphism $f:A\times B\to C$ is a map that is additive in each variable.
\item For the powerset monad $\mP$ on $\Set$ with the natural transformation $\iota$ of Example~\ref{ex:MonoidalMonads}\ref{ex:MonoidalMonads:P}, the Eilenberg--Moore category $\Set^\mP$ is isomorphic to $\Sup$, the category of complete sup-semilattices. With this interpretation, a $\Set^\mP$-bimorphism $f:A\times B\to C$ is a map that preserves suprema in each variable.
\end{examples}

The next result shows in what sense a bimorphism captures the idea of a ``morphism in each variable''. 

\begin{prop}\label{prop:BimTwoParts}
Let $(\mT,\kappa)$ be a monoidal monad on a monoidal category $(\cC,\otimes,E)$. For $\mT$-algebras $(A,a)$, $(B,b)$, and $(C,c)$, a $\cC$-morphism $f:A\otimes B\to C$ is a $\cC^\mT$-bimorphism if and only if both diagrams
\[
\xymatrix@C=3em{A\otimes TB\ar[d]_{1\otimes b}\ar[r]^-{\kappa\cdot(\eta\otimes 1)}&T(A\otimes B)\ar[r]^-{Tf}&TC\ar[d]^{c}\\
A\otimes B\ar[rr]^-{f}&&C}
\qquad\xymatrix{\mbox{}\ar@{}[d]|{\textstyle{\text{and}}}\\\mbox{}}\qquad
\xymatrix@C=3em{TA\otimes B\ar[d]_{a\otimes 1}\ar[r]^-{\kappa\cdot(1\otimes\eta)}&T(A\otimes B)\ar[r]^-{Tf}&TC\ar[d]^{c}\\
A\otimes B\ar[rr]^-{f}&&C}
\]
commute.
\end{prop}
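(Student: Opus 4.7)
The $(\Rightarrow)$ direction is routine: precomposing the square~\eqref{eq:bimorphism} with $\eta_A\otimes 1_{TB}$ and using the $\mT$-algebra unit axiom $a\cdot\eta_A=1_A$ collapses the bottom to $f\cdot(1_A\otimes b)$, yielding the first one-sided square. The second is obtained symmetrically using $1_{TA}\otimes\eta_B$ and $b\cdot\eta_B=1_B$.

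For the converse, abbreviate $\psi:=c\cdot Tf\cdot\kappa_{A,B}$ and $\phi:=f\cdot(a\otimes b)$; the goal is $\psi=\phi$. The plan is to factor $\kappa_{A,B}$ so that the two hypothesized squares can be chained. Concretely, I will establish the identity
\[
\kappa_{A,B} \;=\; \mu_{A\otimes B}\cdot T\bigl(\kappa_{A,B}\cdot(\eta_A\otimes 1_{TB})\bigr)\cdot\kappa_{A,TB}\cdot(1_{TA}\otimes\eta_{TB})
\]
by invoking condition~(\ref{cond:3}), the naturality of $\kappa$ in its first slot at $\eta_A$, and the monad unit axioms $\mu\cdot T\eta=1=\mu\cdot\eta_T$. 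Inserting this factorization into $\psi$, pushing $Tf$ through $\mu_{A\otimes B}$ by naturality of $\mu$, and then applying the $\mT$-algebra axiom $c\cdot\mu_C=c\cdot Tc$ for $(C,c)$, I obtain
\[
\psi \;=\; c\cdot T\bigl(c\cdot Tf\cdot\kappa_{A,B}\cdot(\eta_A\otimes 1_{TB})\bigr)\cdot\kappa_{A,TB}\cdot(1_{TA}\otimes\eta_{TB}).
\]
The inner parenthetical is precisely the top/right leg of the first hypothesized square, so it may be replaced by $f\cdot(1_A\otimes b)$. A final rearrangement, using the naturality of $\kappa$ in the second slot at $b\colon TB\to B$ and the naturality relation $Tb\cdot\eta_{TB}=\eta_B\cdot b$, transforms the expression into $c\cdot Tf\cdot\kappa_{A,B}\cdot(1_{TA}\otimes\eta_B)\cdot(1_{TA}\otimes b)$; the second hypothesis collapses the prefix to $f\cdot(a\otimes 1_B)$, producing $\phi$.

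The main obstacle is recognizing the correct factorization of $\kappa_{A,B}$ above (a Kock-style ``doubled-strength'' decomposition); once this is in place, the remainder is a mechanical interleaving of the naturality of $\kappa$, $\eta$, $\mu$ with the $\mT$-algebra axioms for $(A,a)$, $(B,b)$, $(C,c)$. In particular, no symmetry of $(\cC,\otimes,E)$ nor commutativity of $\mT$ is invoked, which is consistent with the non-symmetric setting adopted in the paper.
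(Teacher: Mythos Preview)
Your proof is correct and follows essentially the same approach as the paper's. The only cosmetic differences are that you compute from $\psi=c\cdot Tf\cdot\kappa$ toward $\phi=f\cdot(a\otimes b)$ whereas the paper goes in the opposite direction, and correspondingly you isolate the factorization of $\kappa_{A,B}$ through $\kappa_{A,TB}\cdot(1\otimes\eta_{TB})$ as a preliminary lemma while the paper uses the mirror factorization through $\kappa_{TA,B}\cdot(\eta_{TA}\otimes 1)$ embedded inline; the ingredients (condition~(\ref{cond:3}), naturality of $\kappa,\eta,\mu$, the monad unit laws, and the algebra axiom $c\cdot\mu_C=c\cdot Tc$) are identical.
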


\begin{proof}
If $f$ is a $\cC^\mT$-bimorphism, then one can append each of the following commutative diagrams
\[
\xymatrix{A\otimes TB\ar[r]^-{\eta\otimes1}\ar[dr]_{1\otimes b}&TA\otimes TB\ar[d]^-{a\otimes b}\\
&A\otimes B}
\qquad\xymatrix{\mbox{}\ar@{}[d]|{\textstyle{\text{and}}}\\\mbox{}}\qquad
\xymatrix{TA\otimes B\ar[r]^-{1\otimes\eta}\ar[dr]_{a\otimes1}&TA\otimes TB\ar[d]^-{a\otimes b}\\
&A\otimes B}
\]
to the left of \eqref{eq:bimorphism} to obtain the respective diagrams of the statement. Conversely, if the two diagrams of the statement commute, then
\begin{align*}
f\cdot(a\otimes b)&=f\cdot(1_A\otimes b)\cdot(a\otimes 1_{TB})\\
&=c\cdot Tf\cdot\kappa_{A,B}\cdot(\eta_A\otimes 1_{TB})\cdot(a\otimes 1_{TB})\\
&=c\cdot Tf\cdot\kappa_{A,B}\cdot(Ta\otimes T1_B)\cdot(\eta_{TA}\otimes 1_{TB})\\
&=c\cdot T(f\cdot(a\otimes 1_B))\cdot\kappa_{TA,B}\cdot(\eta_{TA}\otimes 1_{TB})\\
&=c\cdot T(c\cdot Tf\cdot\kappa_{A,B}\cdot(1_{TA}\otimes\eta_B))\cdot\kappa_{TA,B}\cdot(\eta_{TA}\otimes 1_{TB})\\
&=c\cdot Tf\cdot\mu_{A\otimes B}\cdot T\kappa_{A,B}\cdot T(1_{TA}\otimes\eta_B)\cdot\kappa_{TA,B}\cdot(\eta_{TA}\otimes 1_{TB})\\
&=c\cdot Tf\cdot\mu_{A\otimes B}\cdot T\kappa_{A,B}\cdot\kappa_{TA,TB}\cdot(T1_{TA}\otimes T\eta_B)\cdot(\eta_{TA}\otimes 1_{TB})\\
&=c\cdot Tf\cdot\kappa_{A,B}\cdot(\mu_A\otimes\mu_B)\cdot(\eta_{TA}\otimes T\eta_B)=c\cdot Tf\cdot\kappa_{A,B}\ .
\end{align*}
This shows commutativity of  \eqref{eq:bimorphism}.
\end{proof}

\begin{prop}\label{prop:BimHomFunctorial}
Let $(\mT,\kappa)$ be a monoidal monad on a monoidal category $(\cC,\otimes,E)$. Then
\[
\cC^\mT(-,-;-):(\cC^\mT\times\cC^\mT)^\op\times\cC^\mT\to\Set
\]
is a functor defined by
\[
\cC^\mT(g,h;k)(f):=k\cdot f\cdot(g\otimes h)
\]
for all $\cC^\mT$-morphisms $g:A'\to A$, $h:B'\to B$, $k:C\to C'$, and bimorphisms $f:A\otimes B\to C$.
\end{prop}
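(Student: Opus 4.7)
The proof breaks into two pieces: (a) well-definedness — for $\cC^\mT$-morphisms $g:(A',a')\to(A,a)$, $h:(B',b')\to(B,b)$, $k:(C,c)\to(C',c')$ and a bimorphism $f:A\otimes B\to C$, the composite $k\cdot f\cdot(g\otimes h):A'\otimes B'\to C'$ is again a bimorphism; and (b) functoriality of the assignment on identities and on composites.

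For \textbf{(a)}, which is the main content, I would verify the bimorphism square \eqref{eq:bimorphism} for $k\cdot f\cdot(g\otimes h)$ by the following diagram chase, using, in order: functoriality of $T$; the fact that $k$ is an algebra morphism (so $c'\cdot Tk=k\cdot c$); naturality of $\kappa$ applied to $g\otimes h$ (so $T(g\otimes h)\cdot\kappa_{A',B'}=\kappa_{A,B}\cdot(Tg\otimes Th)$); the bimorphism condition for $f$; bifunctoriality of $\otimes$; and finally the fact that $g$ and $h$ are algebra morphisms (so $a\cdot Tg=g\cdot a'$ and $b\cdot Th=h\cdot b'$). Schematically:
\[
c'\cdot T(k\cdot f\cdot(g\otimes h))\cdot\kappa_{A',B'}
= k\cdot c\cdot Tf\cdot\kappa_{A,B}\cdot(Tg\otimes Th)
= k\cdot f\cdot(a\otimes b)\cdot(Tg\otimes Th)
= k\cdot f\cdot(g\otimes h)\cdot(a'\otimes b').
\]
This is the only non-formal step, and the key observation is the naturality of $\kappa$ in both variables, which transports $T(g\otimes h)$ past $\kappa$.

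For \textbf{(b)}, preservation of identities is immediate from $1_C\cdot f\cdot(1_A\otimes 1_B)=f$ thanks to bifunctoriality of $\otimes$. For composition, given further $\cC^\mT$-morphisms $g':A''\to A'$, $h':B''\to B'$, $k':C'\to C''$, bifunctoriality of $\otimes$ gives
\[
(g\cdot g')\otimes(h\cdot h')=(g\otimes h)\cdot(g'\otimes h'),
\]
so that
\[
\cC^\mT(g\cdot g',h\cdot h';k'\cdot k)(f)
=k'\cdot k\cdot f\cdot(g\otimes h)\cdot(g'\otimes h')
=\cC^\mT(g',h';k')\bigl(\cC^\mT(g,h;k)(f)\bigr),
\]
which is exactly the composition law for a functor on $(\cC^\mT\times\cC^\mT)^\op\times\cC^\mT$ (contravariant in the first two slots, covariant in the third).

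The only substantive step is the diagram chase in (a); everything else is formal manipulation with the bifunctor $\otimes$.
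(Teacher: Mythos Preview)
Your proof is correct and follows essentially the same approach as the paper: the well-definedness chain of equalities you give is verbatim the computation in the paper's proof, and your part (b) merely spells out what the paper dismisses as ``immediate by monoidality of $\cC$''. The only difference is that you annotate each step (naturality of $\kappa$, algebra-morphism conditions, bifunctoriality of $\otimes$) more explicitly than the paper does.
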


\begin{proof}
The given functor is well-defined. Indeed, for $\cC$-morphisms $f$, $g$, $h$, $k$ as in the claim (and $\mT$-algebras $(A,a)$, $(B,b)$, $(C,c)$, $(A',a')$, $(B',b')$, $(C',c')$), one has
\begin{align*}
c'\cdot T(k\cdot f\cdot(g\otimes h))\cdot\kappa_{A',B'}&=k\cdot c\cdot Tf\cdot\kappa_{A,B}\cdot(Tg\otimes Th)\\
&= k\cdot f\cdot(a\otimes b)\cdot(Tg\otimes Th)=k\cdot f\cdot (g\otimes h)\cdot (a'\otimes b')\ ,
\end{align*}
so that $\cC^\mT(g,h;k)f$ is in $\cC^\mT(A',B';C')$. Functoriality is immediate by monoidality of $\cC$.
\end{proof}

\subsection{Tensor products}\label{ssec:TensorProd}
Let $(\mT,\kappa)$ be a monoidal monad on a monoidal category $(\cC,\otimes,E)$. The \df{tensor product} $q_{A,B}:T(A\otimes B)\to(A\boxtimes B,a\boxtie b)$ of $\mT$-algebras $(A,a)$ and $(B,b)$ is given by the following coequalizer diagram
\[
\xymatrix@C=3em{T(TA\otimes TB)\ar[r]<0.7ex>^-{\mu\cdot T\kappa}\ar[r]<-0.7ex>_-{T(a\otimes b)}&T(A\otimes B)\ar[r]^{q}&A\boxtimes B}
\]
in $\cC^\mT$. We often assume that $q_{A,B}$ is implicit and speak of the \emph{tensor product} $(A\boxtimes B,a\bowtie b)$, or simply $A\boxtimes B$. For $\cC^\mT$-morphisms $g:(A,a)\to(A',a')$, $h:(B,b)\to(B',b')$, one has
\begin{align*}
q_{A',B'}\cdot T(g\otimes h)\cdot\mu_{A\otimes B}\cdot T\kappa_{A,B}&=q_{A',B'}\cdot\mu_{A',B'}\cdot TT(g\otimes h)\cdot T\kappa_{A,B}\\
&=q_{A',B'}\cdot\mu_{A',B'}\cdot T\kappa_{A',B'}\cdot T(Tg\otimes Th)\\
&=q_{A',B'}\cdot T(a'\otimes b')\cdot T(Tg\otimes Th)=q_{A',B'}\cdot T(g\otimes h)\cdot T(a\otimes b)\ ,
\end{align*}
so there is a unique $\cC^\mT$-morphism $g\boxtimes h:A\boxtimes B\to A'\boxtimes B'$ with
$(g\boxtimes h)\cdot q_{A,B}=q_{A',B'}\cdot T(g\otimes h)$, and the diagram
\[
\xymatrix{T(A\otimes B)\ar[r]^-{q}\ar[d]_{T(g\otimes h)}&A\boxtimes B\ar[d]^{g\boxtimes h}\\
T(A'\otimes B')\ar[r]^-{q}&A'\boxtimes B'}
\]
commutes.

We say that $\cC^\mT$ \df{has tensors} when $q_{A,B}$ exists for all $\mT$-algebras $A$ and $B$. In particular, if $\cC^\mT$ has reflexive coequalizers, then it has tensors. For $\cC^\mT$ to moreover be a \emph{monoidal} category, we also need associativity and unitary $\cC^\mT$-morphisms that make the corresponding coherence diagrams commute. This is the subject of Theorem~\ref{thm:BoxtimesMonoidal}.

\subsubsection{Convention}
For the sake of convenience, from now on we say that a diagram of the form
\[
\xymatrix@C=3em{X\ar[r]<0.7ex>^-{g}\ar[r]<-0.7ex>_-{h}&Y\ar[r]^{f}&Z}
\]
commutes if $f\cdot g=f\cdot h$.

\begin{prop}\label{prop:CoeqEqGuitart}
Given a monoidal monad $(\mT,\kappa)$ on a monoidal category $(\cC,\otimes,E)$ and $\mT$-algebras $(A,a)$, $(B,b)$, the following statements are equivalent for all $\cC$-morphisms $f:T(A\otimes B)\to C$:
\begin{enumerate}[label=\normalfont{(\roman*)}]
\item the diagram $\xymatrix@C=5.5em{T(TA\otimes TB)\ar[r]<0.7ex>^-{\mu\cdot T\kappa}\ar[r]<-0.7ex>_-{T(a\otimes b)}&T(A\otimes B)\ar[r]^-{f}&C}$ commutes;
\item the diagram $\xymatrix@C=5.5em{T(TA\otimes TB)\ar[r]<0.7ex>^-{\mu\cdot T(\kappa\cdot(\eta\cdot a\otimes 1))}\ar[r]<-0.7ex>_-{\mu\cdot T(\kappa\cdot(1\otimes\eta\cdot b))}&T(A\otimes B)\ar[r]^-{f}&C}$ commutes.
\end{enumerate}
\end{prop}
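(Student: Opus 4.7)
The plan is to prove the two implications separately. For $(\mathrm{i})\Rightarrow(\mathrm{ii})$, I will exploit that $T(a\otimes b)$ is invariant under pre-composition with $T(\eta_A\cdot a\otimes 1_{TB})$ and with $T(1_{TA}\otimes\eta_B\cdot b)$: both identities follow from the unit axioms $a\cdot\eta_A=1_A$ and $b\cdot\eta_B=1_B$ together with bifunctoriality of $\otimes$. Pre-composing (i) by each of these two endomorphisms of $T(TA\otimes TB)$ then produces $f\cdot p_1=f\cdot T(a\otimes b)=f\cdot p_2$ in turn, where $p_1,p_2$ denote the parallel arrows of (ii).

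For the harder converse $(\mathrm{ii})\Rightarrow(\mathrm{i})$, I will introduce the left and right strengths $\tau_{A,B}=\kappa_{A,B}\cdot(\eta_A\otimes 1_{TB})$ and $\tau'_{A,B}=\kappa_{A,B}\cdot(1_{TA}\otimes\eta_B)$, together with their Kleisli extensions $\psi_{A,B}=\mu_{A\otimes B}\cdot T\tau_{A,B}$ and $\psi'_{A,B}=\mu_{A\otimes B}\cdot T\tau'_{A,B}$. The first step is to pre-compose (ii) with $T(\eta_A\otimes 1_{TB}):T(A\otimes TB)\to T(TA\otimes TB)$. Using $a\cdot\eta_A=1_A$ on the left, and on the right the monoidal-monad axiom $\kappa\cdot(\eta\otimes\eta)=\eta$ combined with $\mu\cdot T\eta=1$, the equation reduces to the auxiliary identity $f\cdot\psi_{A,B}=f\cdot T(1_A\otimes b)$ of morphisms $T(A\otimes TB)\to C$. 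A symmetric pre-composition of (ii) with $T(1_{TA}\otimes\eta_B)$ yields $f\cdot\psi'_{A,B}=f\cdot T(a\otimes 1_B)$ of morphisms $T(TA\otimes B)\to C$.

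To close the argument, I will invoke the decomposition $\kappa_{A,B}=\mu\cdot T\tau_{A,B}\cdot\tau'_{A,TB}$, a direct consequence of monoidal-monad axioms (3), (4) and the monad unit laws; this yields $p=\mu\cdot T\kappa=\psi_{A,B}\cdot\psi'_{A,TB}$. Substituting the first auxiliary identity gives $f\cdot p=f\cdot T(1_A\otimes b)\cdot\psi'_{A,TB}$, and a short naturality computation (using $Tb\cdot\eta_{TB}=\eta_B\cdot b$) simplifies $T(1\otimes b)\cdot\psi'_{A,TB}$ to $\mu\cdot T(\kappa\cdot(1\otimes\eta_B\cdot b))=p_2=\psi'_{A,B}\cdot T(1_{TA}\otimes b)$. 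Applying now the second auxiliary identity, $f\cdot p=f\cdot T(a\otimes 1_B)\cdot T(1_{TA}\otimes b)=f\cdot T(a\otimes b)$, which is exactly (i).

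The main obstacle is spotting the two auxiliary identities of the second paragraph: they are what bridges the ``balanced'' form of (ii) with the ``one-sided'' equations needed to substitute along the decomposition $p=\psi\cdot\psi'$. Purely abstract manipulation of the idempotents $T(\eta_A\cdot a\otimes 1_{TB})$, $T(1_{TA}\otimes\eta_B\cdot b)$ is insufficient, because (ii) only relates these to one another and to $T(\eta_A\cdot a\otimes\eta_B\cdot b)$, not to the identity; specific use of the strength axioms is what lifts (ii) to identities between $\psi,\psi'$ and the algebra actions. Once those identities are in hand, the remainder is bookkeeping with naturality of $\kappa$ and $\eta$ and the monad and algebra unit laws.
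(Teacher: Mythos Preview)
Your proof is correct and follows essentially the same strategy as the paper's: both directions agree almost verbatim for $(\mathrm{i})\Rightarrow(\mathrm{ii})$, and for $(\mathrm{ii})\Rightarrow(\mathrm{i})$ both arguments hinge on the factorisation of $\mu\cdot T\kappa$ through the two strengths together with the auxiliary identities relating $\psi,\psi'$ to the algebra actions. The only cosmetic differences are that the paper uses the mirror decomposition $\mu\cdot T\kappa=\psi'_{A,B}\cdot\psi_{TA,B}$ rather than your $\psi_{A,B}\cdot\psi'_{A,TB}$, and it obtains the auxiliary identities by first showing $f\cdot T(a\otimes b)=p_1=p_2$ and then specialising, whereas you extract them directly by pre-composing (ii) with $T(\eta_A\otimes 1)$ and $T(1\otimes\eta_B)$; neither variation changes the substance of the argument.
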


\begin{proof}
Suppose that $f\cdot \mu_{A\otimes B}\cdot T\kappa_{A,B}=f\cdot T(a\otimes b)$. Then one immediately obtains
\[
f\cdot\mu_{A\otimes B}\cdot T(\kappa_{A,B}\cdot(\eta_{A}\cdot a\otimes 1_{TB}))=f\cdot T(a\otimes b)=f\cdot\mu_{A\otimes B}\cdot T(\kappa_{A,B}\cdot(1_{TA}\otimes\eta_B\cdot b))\ .
\]
Conversely, suppose that $f\cdot\mu_{A\otimes B}\cdot T(\kappa_{A,B}\cdot(\eta_{A}\cdot a\otimes 1_{TB}))=f\cdot\mu_{A\otimes B}\cdot T(\kappa_{A,B}\cdot(1_{TA}\otimes\eta_B\cdot b))$ holds. One first notes
\begin{align*}
f\cdot T(a\otimes b)&=f\cdot\mu_{A\otimes B}\cdot T\eta_{A\otimes B}\cdot T(a\otimes b)\\
&=f\cdot\mu_{A\otimes B}\cdot T(\kappa_{A,B}\cdot(\eta_A\otimes\eta_B)\cdot(a\otimes b))\\
&=f\cdot\mu_{A\otimes B}\cdot T(\kappa_{A,B}\cdot(\eta_A\cdot a\otimes 1_{TB})\cdot(1_{TA}\otimes\eta_B\cdot b))\\
&=f\cdot\mu_{A\otimes B}\cdot T(\kappa_{A,B}\cdot(1_{TA}\otimes\eta_B\cdot b)\cdot(1_{TA}\otimes\eta_B\cdot b))\\
&=f\cdot\mu_{A\otimes B}\cdot T(\kappa_{A,B}\cdot(1_{TA}\otimes\eta_B\cdot b))\ ,
\end{align*}
so
\begin{align*}
f\cdot T(a\otimes 1_{B})&=f\cdot\mu_{A\otimes B}\cdot T(\kappa_{A,B}\cdot(1_{TA}\otimes\eta_B))\qquad\text{and}\\
f\cdot T(a\otimes b)&=f\cdot\mu_{A\otimes B}\cdot T(\kappa_{A,B}\cdot(\eta_{A}\cdot a\otimes 1_{TB}))\ .
\end{align*}
These equalities, together with the fact that
\begin{align*}
\kappa_{A,B}&=\kappa_{A,B}\cdot(\mu_A\otimes\mu_B)\cdot(\eta_{TA}\otimes T\eta_B)\\
&=\mu_{A\otimes B}\cdot T\kappa_{A,B}\cdot\kappa_{TA,TB}\cdot (1_{TTA}\otimes T\eta_B)\cdot(\eta_{TA}\otimes 1_{TB})\\
&=\mu_{A\otimes B}\cdot T(\kappa_{A,B}\cdot(1_{TA}\otimes\eta_B))\cdot\kappa_{TA,B}\cdot(\eta_{TA}\otimes 1_{TB})
\end{align*}
yield
\begin{align*}
f\cdot \mu_{A\otimes B}\cdot T\kappa_{A,B}&=f\cdot\mu_{A\otimes B}\cdot T(\mu_{A\otimes B}\cdot T(\kappa_{A,B}\cdot(1_{TA}\otimes\eta_B))\cdot\kappa_{TA,B}\cdot(\eta_{TA}\otimes 1_{TB}))\\
&=f\cdot\mu_{A\otimes B}\cdot T(\kappa_{A,B}\cdot(1_{TA}\otimes\eta_B))\cdot\mu_{TA\otimes B}\cdot T(\kappa_{TA,B}\cdot(\eta_{TA}\otimes 1_{TB}))\\
&=f\cdot T(a\otimes 1_B)\cdot\mu_{TA\otimes B}\cdot T(\kappa_{TA,B}\cdot(\eta_{TA}\otimes 1_{TB}))\\
&=f\cdot \mu_{A\otimes B}\cdot T(\kappa_{A,B}\cdot(\eta_{A}\cdot a\otimes 1_{TB}))=f\cdot T(a\otimes b)\ ,
\end{align*}
as required.
\end{proof}

\begin{cor}\label{cor:CoeqEqGuitart}
Let $(\mT,\kappa)$ be a monoidal monad on a monoidal category $(\cC,\otimes,E)$ such that $\cC^\mT$ has tensors. Then for $\mT$-algebras $(A,a)$ and $(B,b)$, the diagram
\[
\xymatrix@C=5.5em{T(TA\otimes TB)\ar[r]<0.7ex>^-{\mu\cdot T(\kappa\cdot(\eta\cdot a\otimes 1))}\ar[r]<-0.7ex>_-{\mu\cdot T(\kappa\cdot(1\otimes\eta\cdot b))}&T(A\otimes B)\ar[r]^-{q}&A\boxtimes B}
\]
is a coequalizer in $\cC^\mT$ (with $q_{A,B}$ as defined in {\normalfont\ref{ssec:TensorProd}}).
\end{cor}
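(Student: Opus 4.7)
The plan is to derive this corollary as a nearly immediate consequence of Proposition \ref{prop:CoeqEqGuitart}: that proposition shows the two displayed parallel pairs have identical ``coequalizing'' $\cC$-morphisms out of $T(A\otimes B)$, so once one pair admits a coequalizer with a certain cocone, the other inherits the same cocone as coequalizer. By construction in \ref{ssec:TensorProd}, $q_{A,B}$ is already a coequalizer in $\cC^\mT$ of the first pair, so the claim is essentially a formal transfer to the second pair.

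First I would check that both pairs really are $\cC^\mT$-parallel pairs between the free algebras $(T(TA\otimes TB),\mu_{TA\otimes TB})$ and $(T(A\otimes B),\mu_{A\otimes B})$. The arrow $T(a\otimes b)$ is $F_\mT$ applied to $a\otimes b$, while $\mu_{A\otimes B}\cdot T\kappa_{A,B}$ is the $\cC^\mT$-morphism between free algebras that corresponds under the Kleisli adjunction to $\kappa_{A,B}:TA\otimes TB\to T(A\otimes B)$; the two arrows of the second pair arise in the same Kleisli-extension way from $\kappa_{A,B}\cdot(\eta_A\cdot a\otimes 1_{TB})$ and $\kappa_{A,B}\cdot(1_{TA}\otimes\eta_B\cdot b)$ respectively. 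So each of the four maps is indeed a $\cC^\mT$-morphism.

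Next I would apply Proposition \ref{prop:CoeqEqGuitart} with $f:=q_{A,B}$, which coequalizes the first pair by the very definition of the tensor product: the proposition then forces $q_{A,B}$ to coequalize the second pair as well. For the universal property, I would take an arbitrary $\cC^\mT$-morphism $f:(T(A\otimes B),\mu)\to(C,c)$ coequalizing the second pair, view it as a mere $\cC$-morphism, and apply the converse direction of Proposition \ref{prop:CoeqEqGuitart} to conclude that $f$ also coequalizes the first pair. The defining coequalizer property of $q_{A,B}$ in $\cC^\mT$ then delivers a unique $\cC^\mT$-factorization of $f$ through $q_{A,B}$.

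Since the substantive equational manipulations have all been packaged into Proposition \ref{prop:CoeqEqGuitart}, there is no real obstacle in the corollary itself; the only point requiring care is to distinguish coequalizers in $\cC^\mT$ from coequalizers in $\cC$ and to confirm that the proposition, although phrased for $\cC$-morphisms, can be applied freely to $\cC^\mT$-morphisms (which is automatic, as they are in particular $\cC$-morphisms).
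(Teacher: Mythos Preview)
Your proposal is correct and follows exactly the approach the paper intends: the paper's proof is the single sentence ``This is an immediate consequence of the previous Proposition,'' and you have simply unpacked what that means, namely that Proposition~\ref{prop:CoeqEqGuitart} identifies the classes of $\cC$-morphisms (hence in particular of $\cC^\mT$-morphisms) coequalizing either pair, so the defining coequalizer $q_{A,B}$ of the first pair is automatically a coequalizer of the second.
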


\begin{proof}
This is an immediate consequence of the previous Proposition.
\end{proof}

\begin{remarks}\label{rem:BimTens}
\item Corollary \ref{cor:CoeqEqGuitart} confirms that the coequalizer suggested to define a tensor in \cite[Remark, Section 1]{Lin:69} is the same as the one appearing in \cite[Proposition 16]{Gui:80}.
\item\label{rem:BimTens:Triv} As remarked by Linton \cite{Lin:11}, the study of bimorphisms trivializes for certain monads. For example, suppose that $\mT$ is a monoidal monad on $(\Set,\times,\{\star\})$ whose $\mT$-algebras $X$ have two nullary operations $0,1:\{\star\}\to X$ and a binary one $(-)\ast(-):X\times X\to X$ such that
\[
0\ast x = 0\qquad\text{and}\qquad1\ast x = x
\]
for all $x\in X$. If $f:A\times B\to C$ is a bimorphism, then 
\[
f(a,1)=1\qquad\text{and}\qquad f(0,b)=0
\]
for all $a\in A$, $b\in B$, so that $1=f(0,1)=0$. Hence, if $c\in C$, then $c=1\ast c=0\ast c=0$. That is, the only bimorphisms $f:A\times B\to C$ are those for which $C$ is a singleton. (See also Remark~\ref{rem:TrivBimTens}.) This observation applies in particular to bimorphisms in the category of semirings.
\end{remarks}


\subsection{Representing bimorphisms}
A major motivation to the introduction of tensor products in categories of $R$-modules is the representation of bimorphisms. Here, we show that the tensor product of \ref{ssec:TensorProd} plays that role with respect to the bimorphisms of \ref{ssec:Bim}.
 
\begin{prop}\label{prop:BoxtimesFunct}
Let $(\mT,\kappa)$ be a monoidal monad on a monoidal category $(\cC,\otimes,E)$ such that $\cC^\mT$ has tensors. Then
\[
\cC^\mT(-\boxtimes-,-):(\cC^\mT\times\cC^\mT)^\op\times\cC^\mT\to\Set
\]
is a functor. 
\end{prop}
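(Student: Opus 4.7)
The plan is to observe that the stated functor factors as the composition
\[
(\cC^\mT\times\cC^\mT)^\op\times\cC^\mT\xrightarrow{\boxtimes^\op\times 1}(\cC^\mT)^\op\times\cC^\mT\xrightarrow{\cC^\mT(-,-)}\Set,
\]
so that the entire content reduces to upgrading the assignment $(A,B)\mapsto A\boxtimes B$ of \ref{ssec:TensorProd} to a genuine bifunctor $\boxtimes:\cC^\mT\times\cC^\mT\to\cC^\mT$. Once this is in place, composing with the hom-bifunctor gives the desired functor, whose action on morphisms $g:A'\to A$, $h:B'\to B$, $k:C\to C'$ is $\cC^\mT(g\boxtimes h,k)(f)=k\cdot f\cdot(g\boxtimes h)$.

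To verify bifunctoriality of $\boxtimes$, I would rely on the universal property of the coequalizer $q_{A,B}$ that defines $A\boxtimes B$. The morphism $g\boxtimes h$ was already defined in \ref{ssec:TensorProd} as the unique $\cC^\mT$-morphism satisfying $(g\boxtimes h)\cdot q_{A,B}=q_{A',B'}\cdot T(g\otimes h)$. Preservation of identities is immediate: the identity on $A\boxtimes B$ makes the defining square commute for $(g,h)=(1_A,1_B)$, hence equals $1_A\boxtimes 1_B$ by uniqueness. For composition, given $g:A\to A''$, $g':A''\to A'$, $h:B\to B''$, $h':B''\to B'$, both $(g'\boxtimes h')\cdot(g\boxtimes h)$ and $(g'\cdot g)\boxtimes(h'\cdot h)$ satisfy the universal factorization condition over $q_{A,B}$ with codomain $A'\boxtimes B'$, so uniqueness forces them to coincide.

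With $\boxtimes$ confirmed as a bifunctor, well-definedness of the action on morphisms (i.e.\ that $k\cdot f\cdot(g\boxtimes h)$ is again a $\cC^\mT$-morphism out of $A'\boxtimes B'$) is automatic, and functoriality in each of the three variables follows from the standard formal argument for composites of bifunctors with the hom-bifunctor of $\cC^\mT$.

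No substantive obstacle is expected: the whole content is bookkeeping around the universal property of the coequalizer defining $\boxtimes$, already established in \ref{ssec:TensorProd}. The statement is set up as a stepping stone toward the representation result $\cC^\mT(A\boxtimes B,C)\cong\cC^\mT(A,B;C)$ (which combines this functor with that of Proposition~\ref{prop:BimHomFunctorial}), but that identification is not part of what has to be proved here.
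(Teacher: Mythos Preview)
Your proposal is correct and follows the same approach as the paper: the paper's one-line proof (``Immediate by functoriality of $(-\otimes-)$ and $T$ (see~\ref{ssec:TensorProd} and Proposition~\ref{prop:BimHomFunctorial})'') is precisely the observation that $\boxtimes$ is a bifunctor---set up in~\ref{ssec:TensorProd} via the universal property of $q_{A,B}$---composed with the hom-functor. You have simply spelled out the identity and composition verifications that the paper leaves implicit.
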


\begin{proof}
Immediate by functoriality of $(-\otimes-)$ and $T$ (see \ref{ssec:TensorProd} and Proposition~\ref{prop:BimHomFunctorial}).
\end{proof}

The relation between bimorphisms and the tensor product is the subject of the following results.

\begin{lem}\label{lem:BimTensor}
Let $(\mT,\kappa)$ be a monoidal monad on a monoidal category $(\cC,\otimes,E)$. For $\mT$-algebras $(A,a)$, $(B,b)$, $(C,c)$, the following statements are equivalent for any $\cC$-morphism $f:A\otimes B\to C$:
\begin{enumerate}[label=\normalfont{(\roman*)}]
\item $f$ is a bimorphism;
\item The diagram $\xymatrix@C=3em{T(TA\otimes TB)\ar[r]<0.7ex>^-{\mu\cdot T\kappa}\ar[r]<-0.7ex>_-{T(a\otimes b)}&T(A\otimes B)\ar[r]^-{c\cdot Tf}&C}$ commutes.
\end{enumerate}
\end{lem}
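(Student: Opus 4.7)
The claim (i) says $f\cdot(a\otimes b)=c\cdot Tf\cdot\kappa_{A,B}$, while (ii) says $c\cdot Tf\cdot\mu_{A\otimes B}\cdot T\kappa_{A,B}=c\cdot Tf\cdot T(a\otimes b)$. My plan is to prove each direction by a short diagram chase, the backward direction being the one requiring a small trick.

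For (i) $\Rightarrow$ (ii), I would start from the bimorphism equation $f\cdot(a\otimes b)=c\cdot Tf\cdot\kappa_{A,B}$, apply $T$ to both sides to get $Tf\cdot T(a\otimes b)=Tc\cdot T^2f\cdot T\kappa_{A,B}$, and then compose with $c$ on the left. The right-hand side becomes $c\cdot Tc\cdot T^2f\cdot T\kappa_{A,B}$, and I then rewrite $c\cdot Tc=c\cdot\mu_C$ by the $\mT$-algebra axiom for $(C,c)$ and push $\mu$ through $T^2f$ by naturality of $\mu$, yielding $c\cdot Tf\cdot\mu_{A\otimes B}\cdot T\kappa_{A,B}$, which is exactly (ii).

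For (ii) $\Rightarrow$ (i), the key observation is the identity
\[
\kappa_{A,B}=\mu_{A\otimes B}\cdot T\kappa_{A,B}\cdot\eta_{TA\otimes TB},
\]
obtained from naturality of $\eta$ at $\kappa_{A,B}$ (giving $T\kappa\cdot\eta_{TA\otimes TB}=\eta_{T(A\otimes B)}\cdot\kappa$) together with the monad unit law $\mu\cdot\eta_T=1$. Precomposing (ii) with $\eta_{TA\otimes TB}$ and using this identity on the left gives
\[
c\cdot Tf\cdot\kappa_{A,B}=c\cdot Tf\cdot T(a\otimes b)\cdot\eta_{TA\otimes TB}.
\]
On the right-hand side, naturality of $\eta$ at $a\otimes b$ rewrites $T(a\otimes b)\cdot\eta_{TA\otimes TB}=\eta_{A\otimes B}\cdot(a\otimes b)$, and then the $\mT$-algebra unit law $c\cdot\eta_C=1_C$ together with naturality of $\eta$ at $f$ collapses $c\cdot Tf\cdot\eta_{A\otimes B}$ to $f$, giving $f\cdot(a\otimes b)$, which is (i).

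The only step that is not completely mechanical is recognizing the identity $\mu\cdot T\kappa\cdot\eta_{TA\otimes TB}=\kappa_{A,B}$ that bridges (ii) and (i); I expect this to be the main (mild) obstacle. Everything else is naturality and the two monad/algebra unit axioms, so no use of the associativity axiom \ref{cond:3} or of the symmetric or coherence axioms is required.
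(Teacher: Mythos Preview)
Your proof is correct and follows essentially the same route as the paper's: the forward direction applies $T$ to the bimorphism equation, composes with $c$, and uses the algebra axiom $c\cdot Tc=c\cdot\mu_C$ together with naturality of $\mu$; the backward direction inserts $\mu\cdot\eta_T=1$ to recover $\kappa$ from $\mu\cdot T\kappa$, then uses naturality of $\eta$ and the algebra unit law. Your explicit isolation of the identity $\kappa_{A,B}=\mu_{A\otimes B}\cdot T\kappa_{A,B}\cdot\eta_{TA\otimes TB}$ is exactly the step the paper performs inline.
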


\begin{proof}
If $f$ is a bimorphism, then
\[
c\cdot Tf\cdot\mu_{A\otimes B}\cdot T\kappa_{A,B}=c\cdot Tc\cdot TTf\cdot T\kappa_{A,B}=c\cdot Tf\cdot T(a\otimes b)\ .
\]
Conversely, if the diagram in (ii) commutes, then
\[
c\cdot Tf\cdot\kappa_{A,B}=c\cdot Tf\cdot\mu_{A\otimes B}\cdot\eta_{T(A\otimes B)}\cdot\kappa_{A,B}=c\cdot Tf\cdot T(a\otimes b)\cdot\eta_{TA\otimes TB}=f\cdot (a\otimes b)\ ,
\]
so $f$ is a bimorphism.
\end{proof}

\begin{lem}\label{lem:bim}
Let $(\mT,\kappa)$ be a monoidal monad on a monoidal category $(\cC,\otimes,E)$ such that $\cC^\mT$ has tensors. For $\mT$-algebras $(A,a)$, $(B,b)$, and $(C,c)$, the following statements hold.
\begin{enumerate}
\item If $f:A\otimes B\to C$ is a bimorphism, then there is a unique $\cC^\mT$-morphism $\overline{f}:A\boxtimes B\to C$ such that
\[
\xymatrix{T(A\otimes B)\ar[d]_{q}\ar[r]^-{Tf}&TC\ar[d]^{c}\\
A\boxtimes B\ar[r]^-{\overline{f}}&C}
\]
commutes.
\item If $g:A\boxtimes B\to C$ is a $\cC^\mT$-morphism, then $g\cdot q_{A,B}\cdot\eta_{A\otimes B}:A\otimes B\to C$ is a bimorphism that induces $g$, that is, $\overline{g\cdot q_{A,B}\cdot\eta_{A\otimes B}}=g$.
\end{enumerate}
\end{lem}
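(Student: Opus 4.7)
My plan is to deduce both parts directly from the universal property of $q_{A,B}$ as a coequalizer in $\cC^\mT$, together with the characterization of bimorphisms given in Lemma~\ref{lem:BimTensor}.

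For part (1), the first step is to observe that $c\cdot Tf:T(A\otimes B)\to C$ underlies a $\cC^\mT$-morphism from the free algebra $(T(A\otimes B),\mu_{A\otimes B})$ to $(C,c)$: naturality of $\mu$ makes $Tf$ a free-algebra morphism, and the associativity axiom of $(C,c)$ makes $c$ itself a $\mT$-algebra homomorphism. Since $f$ is a bimorphism, Lemma~\ref{lem:BimTensor} tells us that $c\cdot Tf$ coequalizes the parallel pair $\mu\cdot T\kappa$ and $T(a\otimes b)$. Applying the universal property of the coequalizer $q_{A,B}$ in $\cC^\mT$ produces the unique $\cC^\mT$-morphism $\overline{f}:A\boxtimes B\to C$ with $\overline{f}\cdot q_{A,B}=c\cdot Tf$.

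For part (2), set $\tilde{f}:=g\cdot q_{A,B}\cdot\eta_{A\otimes B}$. The key auxiliary observation is that any $\cC^\mT$-morphism $h:(TX,\mu_X)\to(Y,y)$ satisfies $h=y\cdot T(h\cdot\eta_X)$; indeed, $y\cdot T(h\cdot\eta_X)=y\cdot Th\cdot T\eta_X=h\cdot\mu_X\cdot T\eta_X=h$. Applying this with $h=g\cdot q_{A,B}$ (which is a $\cC^\mT$-morphism, being a composite of two such) and $Y=C$ yields
\[
c\cdot T\tilde{f}=g\cdot q_{A,B}\,.
\]
Since $q_{A,B}$ coequalizes the defining parallel pair, so does $c\cdot T\tilde{f}$, and Lemma~\ref{lem:BimTensor} then certifies that $\tilde{f}$ is a bimorphism. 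Finally, $\overline{\tilde{f}}$ is the unique $\cC^\mT$-morphism with $\overline{\tilde{f}}\cdot q_{A,B}=c\cdot T\tilde{f}=g\cdot q_{A,B}$; as $q_{A,B}$ is a coequalizer and therefore epic, $\overline{\tilde{f}}=g$.

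There is really no substantial obstacle in this lemma: both parts are essentially bookkeeping around the universal property of $q_{A,B}$ once Lemma~\ref{lem:BimTensor} is at hand. The only step that requires a moment of thought is the identity $h=y\cdot T(h\cdot\eta_X)$ used in part (2), which encodes the standard fact that a $\mT$-algebra morphism out of a free algebra is determined by its restriction along the unit.
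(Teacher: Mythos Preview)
Your proof is correct and follows essentially the same strategy as the paper: both parts rest on the universal property of $q_{A,B}$ together with Lemma~\ref{lem:BimTensor}, and the pivotal identity $c\cdot T(g\cdot q_{A,B}\cdot\eta_{A\otimes B})=g\cdot q_{A,B}$ appears in both arguments. The only difference is cosmetic: in part~(2) the paper verifies the bimorphism condition for $\tilde f$ directly from the defining diagram~\eqref{eq:bimorphism}, whereas you first establish $c\cdot T\tilde f=g\cdot q_{A,B}$ via your auxiliary identity and then invoke Lemma~\ref{lem:BimTensor} to conclude---a slightly cleaner route that avoids the longer equational chain.
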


\begin{proof}
Since $c\cdot Tf$ is a $\cC^\mT$-morphism, the first claim follows directly from Lemma \ref{lem:BimTensor} and the universal property of the coequalizer $q_{A,B}$.

Given a $\cC^\mT$-morphism $g:A\boxtimes B\to C$, one uses that $q_{A,B}$ is a coequalizer and that $g\cdot q_{A,B}:(T(A\otimes B),\mu_{A\otimes B})\to(C,c)$ is a $\cC^\mT$-morphism to write
\begin{align*}
g\cdot q_{A,B}\cdot\eta_{A\otimes B}\cdot(a\otimes b)&=g\cdot q_{A,B}\cdot T(a\otimes b)\cdot\eta_{TA\otimes TB}\\
&=g\cdot q_{A,B}\cdot \mu_{A\otimes B}\cdot T\kappa_{A,B}\cdot\eta_{TA\otimes TB}\\
&=g\cdot q_{A,B}\cdot \mu_{A\otimes B}\cdot \eta_{T(A\otimes B)}\cdot\kappa_{A,B}\\
&=g\cdot q_{A,B}\cdot \mu_{A\otimes B}\cdot T\eta_{A\otimes B}\cdot\kappa_{A,B}\\
&=c\cdot T(g\cdot q_{A,B})\cdot T\eta_{A\otimes B}\cdot\kappa_{A,B}=c\cdot T(g\cdot q_{A,B}\cdot\eta_{A\otimes B})\cdot\kappa_{A,B}\ ,
\end{align*}
showing that $g\cdot q_{A,B}\cdot\eta_{A\otimes B}$ is a bimorphism. Since
\[
c\cdot T(g\cdot q_{A,B}\cdot\eta_{A\otimes B})=g\cdot q_{A,B}\cdot\mu_{A\otimes B}\cdot T\eta_{A\otimes B}=g\cdot q_{A,B}\ ,
\]
the $\cC^\mT$-morphism induced by $g\cdot q_{A,B}\cdot\eta_{A\otimes B}$ is indeed $g$.
\end{proof}

\begin{prop}\label{prop:bim}
Let $(\mT,\kappa)$ be a monoidal monad on a monoidal category $(\cC,\otimes,E)$ such that $\cC^\mT$ has tensors. For $\mT$-algebras $(A,a)$, $(B,b)$, and $(C,c)$, there is a bijection
\[
\cC^\mT(A,B;C)\cong\cC^\mT(A\boxtimes B,C)
\]
natural in each variable.
\end{prop}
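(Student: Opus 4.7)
The plan is to read the bijection directly off of Lemma~\ref{lem:bim}: part~(1) provides the forward map $f\mapsto\overline{f}$ sending a bimorphism to the induced $\cC^\mT$-morphism $A\boxtimes B\to C$, and part~(2) provides the candidate inverse $g\mapsto g\cdot q_{A,B}\cdot\eta_{A\otimes B}$. I would first note that both assignments are well-defined (Lemma~\ref{lem:bim}(1) gives existence and uniqueness of $\overline{f}$ from Lemma~\ref{lem:BimTensor} and the universal property of the coequalizer $q_{A,B}$; Lemma~\ref{lem:bim}(2) guarantees that $g\cdot q_{A,B}\cdot\eta_{A\otimes B}$ is indeed a bimorphism).

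Next I would verify that these two assignments are mutually inverse. One composite, $\overline{g\cdot q_{A,B}\cdot\eta_{A\otimes B}}=g$, is already the content of Lemma~\ref{lem:bim}(2). For the other composite, starting from a bimorphism $f:A\otimes B\to C$, I would use the defining equation $\overline{f}\cdot q_{A,B}=c\cdot Tf$ from Lemma~\ref{lem:bim}(1) and then compute
\[
\overline{f}\cdot q_{A,B}\cdot\eta_{A\otimes B}=c\cdot Tf\cdot\eta_{A\otimes B}=c\cdot\eta_C\cdot f=f,
\]
invoking naturality of $\eta$ and the unit law $c\cdot\eta_C=1_C$ of the $\mT$-algebra $(C,c)$. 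This part is entirely routine and presents no real obstacle.

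Finally I would check naturality in each of the three variables. Given $\cC^\mT$-morphisms $g:A'\to A$, $h:B'\to B$, $k:C\to C'$, and a bimorphism $f:A\otimes B\to C$, the functor of Proposition~\ref{prop:BimHomFunctorial} sends $f$ to $k\cdot f\cdot(g\otimes h)$, and the functor of Proposition~\ref{prop:BoxtimesFunct} transports $\overline{f}$ to $k\cdot\overline{f}\cdot(g\boxtimes h)$. I would argue that both give the same $\cC^\mT$-morphism out of $A'\boxtimes B'$ by precomposing with the (epimorphic) coequalizer $q_{A',B'}$: using the defining square for $g\boxtimes h$ (so $(g\boxtimes h)\cdot q_{A',B'}=q_{A,B}\cdot T(g\otimes h)$), the defining equation of $\overline{f}$, and the fact that $k$ is a $\cC^\mT$-morphism (hence commutes past $c$ and $T$), one finds that both sides agree when composed with $q_{A',B'}$, and cancellation of the coequalizer finishes the argument.

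The only mildly delicate point is this last naturality verification; everything else is a direct unpacking of the preceding two lemmas. I would therefore present the proof as: define the two maps from Lemma~\ref{lem:bim}, observe that one composite is Lemma~\ref{lem:bim}(2) and the other is a two-line computation using the unit law, and then dispatch naturality by the coequalizer-cancellation argument above.
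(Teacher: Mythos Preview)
Your proposal is correct and follows essentially the same route as the paper: the bijection is read off from Lemma~\ref{lem:bim}, one round-trip is Lemma~\ref{lem:bim}(2) and the other is the computation $\overline{f}\cdot q_{A,B}\cdot\eta_{A\otimes B}=c\cdot Tf\cdot\eta_{A\otimes B}=c\cdot\eta_C\cdot f=f$, while naturality is checked by precomposing with $q_{A',B'}$ and using $(g\boxtimes h)\cdot q_{A',B'}=q_{A,B}\cdot T(g\otimes h)$ together with the defining equation for $\overline{f}$ and the fact that $k$ is a $\cC^\mT$-morphism. The only cosmetic difference is that the paper phrases the last step as ``unicity of the induced $\cC^\mT$-morphism'' rather than coequalizer cancellation, which amounts to the same thing.
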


\begin{proof}
The required bijection is described in Lemma \ref{lem:bim}. Indeed, if $g:A\boxtimes B\to C$ is a $\cC^\mT$-morphism, then $f=g\cdot q_{A,B}\cdot\eta_{A\otimes B}$ is a bimorphism, to which corresponds the unique $\cC^\mT$-morphism $g:A\boxtimes B\to C$. Conversely, if $f:A\otimes B\to C$ is a bimorphism, then there is a $\cC^\mT$-morphism $\overline{f}:A\boxtimes B\to C$ such that $c\cdot Tf=\overline{f}\cdot q_{A,B}$; according to Lemma \ref{lem:bim}, one obtains in return a bimorphism
\[
g=\overline{f}\cdot q_{A,B}\cdot\eta_{A\otimes B}=c\cdot Tf\cdot\eta_{A\otimes B}=c\cdot\eta_C\cdot f=f\ .
\]
For a $\cC^\mT$-bimorphism $f:A\otimes B\to C$, and $\cC^\mT$-morphisms $g:A'\to A$, $h:B'\to B$, $k:C\to C'$, one has
\[
k\cdot\overline{f}\cdot(g\boxtimes h)\cdot q_{A',B'}=k\cdot\overline{f}\cdot q_{A,B}\cdot T(g\otimes h)=k\cdot c\cdot Tf\cdot T(g\otimes h)=c'\cdot Tk\cdot Tf\cdot T(g\otimes h)\ ,
\]
that is,
\[
\overline{k\cdot f\cdot(g\otimes h)}=k\cdot\overline{f}\cdot(g\boxtimes h)
\]
(by unicity of the induced $\cC^\mT$-morphism). Hence, the bijection is natural.
\end{proof}

\begin{rem}\label{rem:TrivBimTens}
In the trivial cases where the only bimorphisms are $\cC$-morphisms $f:A\otimes B\to I$ into the terminal object of $\cC$ (as in Remark~\ref{rem:BimTens}\eqref{rem:BimTens:Triv}), Proposition~\ref{prop:bim} shows that $A\boxtimes B\cong I$: the identity $\cC^\mT$-morphism $1_{A\boxtimes B}$ corresponds to a bimorphism $f:A\otimes B\to A\boxtimes B$, so that $A\boxtimes B\cong I$.
\end{rem}


\subsection{Reflexive coequalizers}

We recall here some basic results pertaining to reflexive coequalizers, and thus applying to the coequalizer $q$ defined in \ref{ssec:TensorProd}.

\begin{prop}\label{prop:TensorReflexiveCoeq}
A functor $F:\cX\times\cY\to\cZ$ preserves reflexive coequalizers if and only if $F(X,-):\cY\to\cZ$ and $F(-,Y):\cX\to\cZ$ preserves reflexive coequalizers for all $X\in\ob\cX$, $Y\in\ob\cY$.
\end{prop}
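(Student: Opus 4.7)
The plan is to treat the two directions separately, both relying on the fact that coequalizers in $\cX \times \cY$ are computed componentwise.

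For the \emph{only if} direction, given a reflexive pair $(g_1, g_2): Y_1 \rightrightarrows Y_2$ in $\cY$ with common section $s$ and coequalizer $e_Y: Y_2 \to Y_\infty$, one checks directly that $(1_X, e_Y)$ is the coequalizer in $\cX \times \cY$ of the reflexive pair $((1_X, g_1), (1_X, g_2))$ (with section $(1_X, s)$). Applying $F$ yields the claim for $F(X, -)$, and the symmetric argument handles $F(-, Y)$.

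For the \emph{if} direction, start with a reflexive pair $((f_1, g_1), (f_2, g_2)): (X_1, Y_1) \rightrightarrows (X_2, Y_2)$ in $\cX \times \cY$ with common section $(s_X, s_Y)$; its coequalizer is $(e_X, e_Y): (X_2, Y_2) \to (X_\infty, Y_\infty)$, where $e_X$ and $e_Y$ are the coequalizers of the component reflexive pairs. Factor $F(e_X, e_Y) = F(1_{X_\infty}, e_Y) \cdot F(e_X, 1_{Y_2})$; by hypothesis each factor is a coequalizer of a reflexive pair in $\cZ$, and hence an epimorphism. To verify the universal property, consider $h: F(X_2, Y_2) \to W$ with $h \cdot F(f_1, g_1) = h \cdot F(f_2, g_2)$. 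Using $g_i \cdot s_Y = 1_{Y_2}$, write $F(f_i, 1_{Y_2}) = F(f_i, g_i) \cdot F(1_{X_1}, s_Y)$, so $h$ coequalizes the pair $F(f_i, 1_{Y_2})$ and factors uniquely as $h = h' \cdot F(e_X, 1_{Y_2})$ for some $h': F(X_\infty, Y_2) \to W$. Dually, $f_i \cdot s_X = 1_{X_2}$ gives $h \cdot F(1_{X_2}, g_1) = h \cdot F(1_{X_2}, g_2)$; then from
\[
h' \cdot F(1_{X_\infty}, g_i) \cdot F(e_X, 1_{Y_1}) = h' \cdot F(e_X, g_i) = h \cdot F(1_{X_2}, g_i),
\]
cancelling the epimorphism $F(e_X, 1_{Y_1})$ (which is a coequalizer by the hypothesis applied to $F(-, Y_1)$) shows that $h'$ coequalizes $F(1_{X_\infty}, g_1), F(1_{X_\infty}, g_2)$. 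Hence $h'$ factors through $F(1_{X_\infty}, e_Y)$, producing the desired factorization of $h$ through $F(e_X, e_Y)$; uniqueness follows since $F(e_X, e_Y)$ is a composite of epimorphisms.

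The principal technical point, and the reason the reflexivity hypothesis is essential, is the step converting the single equation $h \cdot F(f_1, g_1) = h \cdot F(f_2, g_2)$ into the two one-variable coequalization equations that feed the hypothesis on the partial functors: the common sections $s_X$ and $s_Y$ furnish precisely the retractions needed to replace one argument by an identity, and without them there would be no way to reduce to the separately verified cases.
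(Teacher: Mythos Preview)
Your argument is correct. The paper itself proves only the easy direction (in one line) and defers the substantive direction to the dual of \cite[Corollary~1.2.12]{Joh:02a}; what you have written out is precisely that standard $3\times3$-style argument, so your approach coincides with the one the paper points to.
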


\begin{proof}
The necessity of the statement is immediate since $F(X,g)=F(1_X,g)$ and $F(f,Y)=F(f,1_Y)$ for all $X\in\ob\cX$, $Y\in\ob\cY$. For the sufficiency, see the dual of \cite[Corollary~1.2.12]{Joh:02a}.
\end{proof}

Since we wish to study the tensor products $A\boxtimes B$ defined via the reflexive $\cC^\mT$-coequalizer $q_{A,B}:T(A\otimes B)\to A\boxtimes B$, we recall below two results pertaining to the existence of such colimits.

\begin{prop}\label{prop:CreatReflCoeq}
Let $\mT$ be a monad on $\cC$. If $\cC$ has reflexive coequalizers and $T$ preserves them, then the forgetful functor $\cC^\mT\to\cC$ creates them.
\end{prop}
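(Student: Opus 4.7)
The plan is to carry out the standard argument that the forgetful functor $U:\cC^\mT\to\cC$ creates those colimits that $T$ preserves, specialized to reflexive coequalizers. Given a reflexive pair $f,g:(A,a)\to(B,b)$ in $\cC^\mT$ with common section $s:(B,b)\to(A,a)$, I would form the coequalizer $q:B\to C$ of $f,g$ in $\cC$, which exists by hypothesis, and then lift the algebra structure along $q$.

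The central observation is that applying $T$ to the reflexive pair $(f,g)$ yields a reflexive pair $(Tf,Tg)$ with section $Ts$, so by the preservation hypothesis $Tq$ is the coequalizer of $Tf,Tg$ in $\cC$; in particular $Tq$ is an epimorphism. Since $f$ and $g$ are $\cC^\mT$-morphisms, a short computation gives $q\cdot b\cdot Tf=q\cdot f\cdot a=q\cdot g\cdot a=q\cdot b\cdot Tg$, so the universal property of $Tq$ yields a unique $c:TC\to C$ with $c\cdot Tq=q\cdot b$. This is the only possible choice, which already delivers the uniqueness component of ``creates''. The step I expect to be the main (though still routine) obstacle is verifying that $c$ satisfies the algebra laws, since the associativity law involves $TT$ and requires knowing that $TTq$ is also the coequalizer of $TTf,TTg$. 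This follows because $TT$ likewise preserves reflexive coequalizers (the composition of two functors preserving them still does), so $TTq$ is epi, and then the identity $c\cdot\mu_C\cdot TTq=q\cdot b\cdot\mu_B=q\cdot b\cdot Tb=c\cdot Tq\cdot Tb=c\cdot T(q\cdot b)=c\cdot Tc\cdot TTq$ cancels on the right; the unit law $c\cdot\eta_C=1_C$ is obtained analogously by cancelling $q$.

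Next I would observe that, by construction, $q:(B,b)\to(C,c)$ is a $\cC^\mT$-morphism coequalizing $f$ and $g$. To see it is the coequalizer in $\cC^\mT$, take any $h:(B,b)\to(D,d)$ in $\cC^\mT$ with $h\cdot f=h\cdot g$. The universal property in $\cC$ produces a unique $\bar h:C\to D$ with $\bar h\cdot q=h$, and the computation $\bar h\cdot c\cdot Tq=\bar h\cdot q\cdot b=h\cdot b=d\cdot Th=d\cdot T\bar h\cdot Tq$ together with the epi $Tq$ shows $\bar h\cdot c=d\cdot T\bar h$, so $\bar h$ is a $\cC^\mT$-morphism. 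Finally, $U$ preserves this coequalizer essentially by construction.

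Together, these steps show that every reflexive pair in $\cC^\mT$ admits a unique lift of its coequalizer in $\cC$ to a coequalizer in $\cC^\mT$ which is preserved by $U$, which is exactly the creation property.
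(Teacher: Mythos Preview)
Your proof is correct and is precisely the standard argument; the paper itself does not spell it out but simply refers to \cite[Proposition~3]{Lin:69}, which contains essentially the proof you have written.
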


\begin{proof}
See \cite[Proposition~3]{Lin:69}.
\end{proof}

\begin{prop}
If $\mT$ is a regular monad on $\cC$, then $\cC^\mT$ has all colimits that exist in $\cC$.
\end{prop}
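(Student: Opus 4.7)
The plan is to use the standard ``presentation'' of algebras as reflexive coequalizers of free algebras, and then build colimits of arbitrary diagrams in $\cC^\mT$ by first taking the colimit in $\cC$, freely lifting it, and then coequalizing. The regularity assumption is precisely what is needed to invoke Proposition~\ref{prop:CreatReflCoeq}.

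First I would observe that, since $\mT$ is regular, the previous proposition tells us that $\cC^\mT$ has (and $U^\mT:\cC^\mT\to\cC$ creates) reflexive coequalizers. Next, I would recall the canonical presentation of any $\mT$-algebra $(A,a)$: the pair
\[
F^\mT TA\;\substack{\xrightarrow{\ F^\mT a\ }\\[-0.3ex]\xrightarrow[\ F^\mT\mu_A\ ]{}}\;F^\mT A
\]
is a reflexive pair in $\cC^\mT$ (with common section $F^\mT\eta_A$), and it is split as an absolute coequalizer in $\cC$ with coequalizer $a:TA\to A$; hence by creation of reflexive coequalizers it has coequalizer $(A,a)$ in $\cC^\mT$.

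Given a diagram $D:\cJ\to\cC^\mT$ whose underlying diagram $U^\mT D:\cJ\to\cC$ admits a colimit, I would then apply the canonical presentation pointwise to obtain a reflexive pair of diagrams of free algebras
\[
F^\mT TU^\mT D\;\substack{\rightarrow\\[-0.3ex]\rightarrow}\;F^\mT U^\mT D
\]
whose ``pointwise'' coequalizer is $D$. The key observation is that for any diagram $X:\cJ\to\cC$ admitting a colimit in $\cC$, the diagram $F^\mT X:\cJ\to\cC^\mT$ admits the colimit $F^\mT(\mathrm{colim}\, X)$, because $F^\mT$ is a left adjoint. Since $T$ preserves reflexive coequalizers (hence, by Proposition~\ref{prop:TensorReflexiveCoeq}-style reasoning, enough of the relevant colimits to make the construction go through), both $\mathrm{colim}\,TU^\mT D$ and $\mathrm{colim}\,U^\mT D$ exist in $\cC$. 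Then I would form the reflexive coequalizer in $\cC^\mT$
\[
F^\mT(\mathrm{colim}\,TU^\mT D)\;\substack{\rightarrow\\[-0.3ex]\rightarrow}\;F^\mT(\mathrm{colim}\,U^\mT D)\;\to\;L,
\]
which exists by Step~1, and claim that $L$ is the colimit of $D$ in $\cC^\mT$.

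Finally, I would verify this universal property: a cocone $D\Rightarrow(C,c)$ in $\cC^\mT$ corresponds, by freeness, to a compatible family of $\cC^\mT$-morphisms $F^\mT U^\mT D(j)\to(C,c)$ equalizing the two parallel maps above, which in turn induces a unique map out of $L$. The main obstacle will be the bookkeeping in this last step: one must carefully check that the naturality squares for cocones on $D$ translate into the coequalizing condition with respect to the two parallel maps of the reflexive pair, and that the uniqueness follows from the universal properties of both the colimit in $\cC$ (to handle the free side) and the reflexive coequalizer in $\cC^\mT$ (to descend to $L$). All other steps are essentially formal.
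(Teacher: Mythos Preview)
The paper gives no proof here at all; it simply cites \cite[Proposition~20.33]{AdaHerStr:90}. Your outline is Linton's classical construction, which is indeed the standard argument behind that reference, so in spirit you are on the right track.

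There are, however, two genuine wobbles.

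First, you invoke Proposition~\ref{prop:CreatReflCoeq} on the grounds that $\mT$ is regular. But that proposition requires that $\cC$ have reflexive coequalizers and that $T$ preserve them; ``regular monad'' in the sense of \cite{AdaHerStr:90} is not that condition. The paper lists the two propositions side by side precisely because their hypotheses differ. What Linton's argument actually needs is only that $\cC^\mT$ have reflexive coequalizers, which regularity does supply, but not via Proposition~\ref{prop:CreatReflCoeq}.

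Second, and more concretely, you read the statement per-diagram (``a diagram $D$ whose underlying diagram admits a colimit'') and then assert that $\mathrm{colim}\,TU^\mT D$ exists in $\cC$ ``since $T$ preserves reflexive coequalizers''. That inference is invalid: preservation of reflexive coequalizers says nothing about the existence of a $\cJ$-shaped colimit of $j\mapsto TA_j$. The intended reading is per-shape: if $\cC$ has $\cJ$-colimits, then $\mathrm{colim}_j TA_j$ exists simply because it is a $\cJ$-shaped diagram in $\cC$, and $F^\mT$ (being a left adjoint) then carries it to the required colimit in $\cC^\mT$. With that correction your construction goes through, and the verification of the universal property is the routine bookkeeping you describe.
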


\begin{proof}
See \cite[Proposition~20.33]{AdaHerStr:90}.
\end{proof}


\subsection{The monoidal Eilenberg--Moore category}
Consider a monoidal monad  $(\mT,\kappa)$ on a monoidal category $(\cC,\otimes,E)$. Theorem~\ref{thm:BoxtimesMonoidal} presents sufficient conditions for the $\cC^\mT$-morphisms
\[
q_{A,B}:T(A\otimes B)\to A\boxtimes B
\]
to induce a monoidal structure on $\cC^\mT$. The proof of this result relies in part on the explicit description of the coequalizer $q_{TA,TB}:T(TA\otimes TB)\to TA\boxtimes TB$ defined in~\ref{conv:Tensor}.

\begin{lem}\label{lem:CoeqSimple}
Let $\mT$ be a monad on a category $\cC$. For $\cC$-morphisms $r:Z\to TY$, $s:Z\to Y$, and $p:TY\to X$,
\[
(p\cdot\mu_Y\cdot Tr=p\cdot Ts)\implies(p\cdot r=p\cdot\eta_Y\cdot s)\ .
\]
\end{lem}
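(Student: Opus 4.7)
The plan is to precompose the given equation with the unit $\eta_Z\colon Z\to TZ$ and invoke naturality of $\eta$ together with a unit law of the monad.

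Starting from the hypothesis $p\cdot\mu_Y\cdot Tr=p\cdot Ts$, I would whisker both sides on the right by $\eta_Z$, obtaining $p\cdot\mu_Y\cdot Tr\cdot\eta_Z=p\cdot Ts\cdot\eta_Z$. Naturality of $\eta$ applied to $r$ and $s$ respectively gives $Tr\cdot\eta_Z=\eta_{TY}\cdot r$ and $Ts\cdot\eta_Z=\eta_Y\cdot s$, so the equation rewrites as $p\cdot\mu_Y\cdot\eta_{TY}\cdot r=p\cdot\eta_Y\cdot s$. Finally, the monad unit law $\mu_Y\cdot\eta_{TY}=1_{TY}$ collapses the left-hand side to $p\cdot r$, yielding the desired identity $p\cdot r=p\cdot\eta_Y\cdot s$.

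There is no real obstacle here; the lemma is a one-step diagram chase. The only thing worth being careful about is distinguishing the two unit laws of a monad (the equation needs $\mu\cdot\eta T=1$, not $\mu\cdot T\eta=1$), and making sure naturality of $\eta$ is applied with the correct component at the source of $r$ and $s$.
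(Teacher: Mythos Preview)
Your proof is correct and essentially identical to the paper's: both precompose with $\eta_Z$, invoke naturality of $\eta$, and use the unit law $\mu_Y\cdot\eta_{TY}=1_{TY}$. The paper just chains the equalities starting from $p\cdot r$ rather than from the hypothesis, but the content is the same.
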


\begin{proof}
One simply has
\[
p\cdot r=p\cdot\mu_{Y}\cdot\eta_{TY}\cdot r=p\cdot\mu_{Y}\cdot Tr\cdot\eta_{Z}=p\cdot Ts\cdot\eta_{Z}=p\cdot\eta_{Y}\cdot s\ .
\]
\end{proof}

\begin{prop}\label{prop:TensorFree}
Let $(\mT,\kappa)$ be a monoidal monad on a monoidal category $(\cC,\otimes,E)$. The following statements hold:
\begin{enumerate}
\item the diagram
\[
\xymatrix@C=3em{T(TTA\otimes TTB)\ar[r]<0.7ex>^-{\mu\cdot T\kappa}\ar[r]<-0.7ex>_-{T(\mu\otimes\mu)}&T(TA\otimes TB)\ar[r]^-{\mu\cdot T\kappa}&T(A\otimes B)}
\]
forms a coequalizer in $\cC^\mT$;
\item if $f:A\to A'$ and $g:B\to B'$ are $\cC$-morphisms, then $T(f\otimes g):T(A\otimes B)\to T(A'\otimes B')$ is the unique $\cC^\mT$-morphism that makes the diagram
\[
\xymatrix{T(TA\otimes TB)\ar[r]^-{\mu\cdot T\kappa}\ar[d]_{T(Tf\otimes Tg)}&T(A\otimes B)\ar[d]^{T(f\otimes g)}\\
T(TA'\otimes TB')\ar[r]^-{\mu\cdot T\kappa}&T(A'\otimes B')}
\]
commute.
\end{enumerate}
\end{prop}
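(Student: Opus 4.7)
My strategy is to treat part~(1) -- that $\mu_{A\otimes B}\cdot T\kappa_{A,B}$ is a coequalizer in $\cC^\mT$ of the displayed pair -- as the main content, and deduce part~(2) as a short corollary via naturality together with the epi property of coequalizers. For~(1), I would first verify that $\mu_{A\otimes B}\cdot T\kappa_{A,B}$ is a $\cC^\mT$-morphism between the free algebras on $TA\otimes TB$ and $A\otimes B$, which reduces to naturality of $\mu$ plus associativity $\mu\cdot T\mu=\mu\cdot\mu$. Next, I would check it coequalizes the pair: starting from axiom~(\ref{cond:3}), $\mu_{A\otimes B}\cdot T\kappa_{A,B}\cdot\kappa_{TA,TB}=\kappa_{A,B}\cdot(\mu\otimes\mu)$, I apply $\mu_{A\otimes B}\cdot T(-)$ to both sides and simplify using naturality of $\mu$ and associativity to land the identity $\mu\cdot T\kappa_{A,B}\cdot\mu\cdot T\kappa_{TA,TB}=\mu\cdot T\kappa_{A,B}\cdot T(\mu\otimes\mu)$.

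The core of the proof is the universal property. Given a $\cC^\mT$-morphism $h\colon(T(TA\otimes TB),\mu)\to(C,c)$ with $h\cdot\mu\cdot T\kappa_{TA,TB}=h\cdot T(\mu\otimes\mu)$, my candidate factoring map is $\bar h := h\cdot T(\eta_A\otimes\eta_B)\colon T(A\otimes B)\to C$, which is automatically a $\cC^\mT$-morphism from a free algebra by naturality of $\mu$ and the algebra property of $h$. To prove $\bar h\cdot\mu\cdot T\kappa_{A,B}=h$, I would push $T(\eta_A\otimes\eta_B)$ past $\mu_{A\otimes B}$ via naturality of $\mu$, then use naturality of $\kappa$ to rewrite $T(\eta_A\otimes\eta_B)\cdot\kappa_{A,B}=\kappa_{TA,TB}\cdot(T\eta_A\otimes T\eta_B)$; the composite then contains $h\cdot\mu\cdot T\kappa_{TA,TB}$, which the coequalizing hypothesis replaces by $h\cdot T(\mu\otimes\mu)$, and the unit law $\mu\cdot T\eta=1$ collapses the remainder back to $h$. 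For uniqueness, any alternative $\bar h'$ with $\bar h'\cdot\mu\cdot T\kappa_{A,B}=h$ satisfies $\bar h'=\bar h'\cdot\mu\cdot T\eta_{A\otimes B}$ by the unit law, which by axiom~(\ref{cond:compatibility}), $\eta_{A\otimes B}=\kappa_{A,B}\cdot(\eta_A\otimes\eta_B)$, equals $\bar h'\cdot\mu\cdot T\kappa_{A,B}\cdot T(\eta_A\otimes\eta_B)=h\cdot T(\eta_A\otimes\eta_B)=\bar h$.

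Part~(2) then follows quickly: commutativity of the square comes from two applications of naturality -- of $\mu$ with respect to $T(f\otimes g)$ and of $\kappa$ with respect to $f\otimes g$ -- while uniqueness of a $\cC^\mT$-morphism fitting into the square is immediate from~(1), since $\mu\cdot T\kappa_{A,B}$ is a (regular) epimorphism in $\cC^\mT$. The main obstacle in the whole proof is the universality argument in~(1): guessing the correct candidate $\bar h = h\cdot T(\eta_A\otimes\eta_B)$ and sequencing the identities (naturality of $\mu$; naturality of $\kappa$; the coequalizing hypothesis; the unit law $\mu\cdot T\eta=1$) so that the hypothesis is invoked at the precise step where $T\kappa_{TA,TB}$ surfaces. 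Everything else is bookkeeping with the monad and monoidal-monad axioms.
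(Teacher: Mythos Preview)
Your proposal is correct and follows essentially the same approach as the paper: the same candidate $\bar h=h\cdot T(\eta_A\otimes\eta_B)$, the same naturality manoeuvres, and the same epi argument for uniqueness and for part~(2). The only cosmetic difference is in the factoring step: you invoke the coequalizing hypothesis $h\cdot\mu\cdot T\kappa_{TA,TB}=h\cdot T(\mu\otimes\mu)$ directly, whereas the paper first rewrites $h\cdot\mu$ as $c\cdot Th$ (using that $h$ is a $\cC^\mT$-morphism) and then applies the preparatory Lemma~\ref{lem:CoeqSimple} under $T$; your route is marginally more direct and avoids that lemma.
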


\begin{proof}
By naturality of $\mu$ and the fact that $\mu\cdot\mu T=\mu\cdot T\mu$, one has
\begin{align*}
\mu_{A\otimes B}\cdot T\kappa_{A,B}\cdot\mu_{TA\otimes TB}\cdot T\kappa_{TA,TB}&=\mu_{A\otimes B}\cdot T\mu_{A\otimes B}\cdot TT\kappa_{A,B}\cdot T\kappa_{TA,TB}\\
&=\mu_{A\otimes B}\cdot T\kappa_{A,B}\cdot T(\mu_A\otimes\mu_B)\ ,
\end{align*}
so the diagram in the statement commutes. Suppose that $f:(T(TA\otimes TB),\mu_{TA\otimes TB})\to(C,c)$ is a $\cC^\mT$-morphism that makes 
\[
\xymatrix@C=3em{T(TTA\otimes TTB)\ar[r]<0.7ex>^-{\mu\cdot T\kappa}\ar[r]<-0.7ex>_-{T(\mu\otimes\mu)}&T(TA\otimes TB)\ar[r]^-{f}&C}
\]
commute. Setting $\overline{f}:=f\cdot T(\eta_A\otimes\eta_B)$, one has
\begin{align*}
\overline{f}\cdot \mu_{A\otimes B}\cdot T\kappa_{A,B}&=f\cdot\mu_{A\otimes B}\cdot T\kappa_{A,B}\cdot T(T\eta_A\otimes T\eta_B)\\
&=c\cdot Tf\cdot T\kappa_{A,B}\cdot T(T\eta_A\otimes T\eta_B)\\
&=c\cdot Tf\cdot T\eta_{TA\otimes TB}=f
\end{align*}
by using the equality $f\cdot\kappa_{TA,TB}=f\cdot\eta_{TA\otimes TB}\cdot(\mu_{A}\otimes\mu_{B})$ following from Lemma~\ref{lem:CoeqSimple}. Since $\mu_{A\otimes B}\cdot T\kappa_{A,B}$ is an epimorphism, the comparison $\cC^\mT$-morphism $\overline{f}$ is uniquely determined. 

The diagram in the concluding statement commutes by naturality of $\mu$ and $\kappa$, and the unicity of the induced map follows from the fact that $\mu_{A\otimes B}\cdot T\kappa_{A,B}$ is epic.
\end{proof}

\subsubsection{Identifying coequalizers}\label{conv:Tensor} In view of Proposition~\ref{prop:TensorFree}, we can set
\[
q_{TA,TB}:=\mu_{A\otimes B}\cdot T\kappa_{A,B}\dand TA\boxtimes TB:=T(A\otimes B)
\]
for all $A,B\in\ob\cC$. Hence, if $f:TA\to TA'$ and $g:TB\to TB'$ are $\cC$-morphisms, then
\[
Tf\boxtimes Tg=T(f\otimes g):TA\boxtimes TB\to TA'\boxtimes TB'\ .
\]
If moreover the coequalizer $q_{A,B}:T(A\otimes B)\to A\boxtimes B$ exists in $\cC^\mT$, it follows from commutativity of
\[
\xymatrix@C=3em{T(TA\otimes TB)\ar[r]^-{T(a\otimes b)}\ar[d]_{\mu\cdot T\kappa}&T(A\otimes B)\ar[d]^{q}\\
T(A\otimes B)\ar[r]^-{q}&A\boxtimes B}
\]
that
\[
q_{A,B}=a\boxtimes b\ .
\]
These identifications will be used without necessarily further mention in the proof of Theorem~\ref{thm:BoxtimesMonoidal}.

\begin{rem}
The following result states that, under certain hypotheses, $(\cC^\mT,\boxtimes,TE)$ becomes a monoidal category with associativity and unitary structure morphisms \emph{induced} by those of $(\cC,\otimes,E)$. The meaning of this term is made clear directly in the proof in an attempt to avoid a rather cumbersome direct definition.
\end{rem}

\begin{thm}\label{thm:BoxtimesMonoidal}
Let $(\mT,\kappa)$ be a monoidal monad on a monoidal category $(\cC,\otimes,E)$ such that all the coequalizers $q_{A,B}$ exist in $\cC^\mT$. If 
\begin{gather*}
\xymatrix@C=5em{T(TA\otimes TB)\boxtimes TTC\ar[r]<0.7ex>^-{(\mu\cdot T\kappa)\boxtimes\mu}\ar[r]<-0.7ex>_-{T(a\otimes b)\boxtimes Tc}&T(A\otimes B)\boxtimes TC\ar[r]^-{q\boxtimes c}&(A\boxtimes B)\boxtimes C}\\
\xymatrix@C=5em{TTA\boxtimes T(TB\otimes TC)\ar[r]<0.7ex>^-{\mu\boxtimes (\mu\cdot T\kappa)}\ar[r]<-0.7ex>_-{Ta\boxtimes T(b\otimes c)}&TA\boxtimes T(B\otimes C)\ar[r]^-{a\boxtimes q}&A\boxtimes(B\boxtimes C)}
\end{gather*}
are coequalizer diagrams and $q_{A,B}\boxtimes q_{C,D}$ are epimorphisms (for all $\mT$-algebras $(A,a)$, $(B,b)$, $(C,c)$ and $(D,d)$), then $(\cC^\mT,\boxtimes,TE)$ is a monoidal category with structure morphisms induced by those of $(\cC,\otimes,E)$.

Moreover, if $(\cC,\otimes,E)$ and $(\mT,\kappa)$ are symmetric monoidal, then so is $(\cC^\mT,\boxtimes,TE)$.
\end{thm}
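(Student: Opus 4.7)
My plan is to construct the structure morphisms of $(\cC^\mT,\boxtimes,TE)$ in two stages --- first on free algebras, where they are transparently $T$-images of the structure morphisms of $(\cC,\otimes,E)$, and then on arbitrary $\mT$-algebras via the coequalizer hypotheses --- and to verify each coherence axiom by precomposing with a suitable epimorphism to reduce it to the $T$-image of the corresponding axiom in $\cC$. Under the convention of~\ref{conv:Tensor}, on free algebras I set $\alpha^\boxtimes_{TX,TY,TZ} := T\alpha_{X,Y,Z}$, $\lambda^\boxtimes_{TX} := T\lambda_X$, $\rho^\boxtimes_{TX} := T\rho_X$, and in the symmetric case $\sigma^\boxtimes_{TX,TY} := T\sigma_{X,Y}$; each is visibly a $\cC^\mT$-morphism between free algebras, and all naturality and coherence identities at this level are simply $T$ applied to the corresponding identities in $\cC$.

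For arbitrary $\mT$-algebras $(A,a),(B,b),(C,c)$, the associator is induced from the composite
\[
T(A\otimes B) \boxtimes TC \xrightarrow{T\alpha_{A,B,C}} TA \boxtimes T(B\otimes C) \xrightarrow{a\boxtimes q_{B,C}} A\boxtimes(B\boxtimes C),
\]
where the first arrow uses the identifications $T((A\otimes B)\otimes C) = T(A\otimes B)\boxtimes TC$ and $T(A\otimes(B\otimes C)) = TA\boxtimes T(B\otimes C)$. A diagram chase invoking naturality of $\alpha,\kappa,\mu$ and the monoidal monad axioms shows that this composite coequalizes the pair $(\mu\cdot T\kappa)\boxtimes\mu$, $T(a\otimes b)\boxtimes Tc$, so the first coequalizer hypothesis yields a unique $\alpha^\boxtimes_{A,B,C}$ with $\alpha^\boxtimes_{A,B,C}\cdot(q_{A,B}\boxtimes c) = (a\boxtimes q_{B,C})\cdot T\alpha_{A,B,C}$. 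An inverse is constructed symmetrically from $T\alpha^{-1}_{A,B,C}$ using the second coequalizer hypothesis, and is shown to be a two-sided inverse by precomposing with the respective coequalizers and invoking $T(\alpha\cdot\alpha^{-1})=1$. The unitor $\lambda^\boxtimes_{A}$ is produced via Proposition~\ref{prop:bim}: the $\cC$-morphism $\psi_A := a\cdot T\lambda_A\cdot\kappa_{E,A}\cdot(1_{TE}\otimes\eta_A) \colon TE\otimes A\to A$ is checked to be a $\cC^\mT$-bimorphism by a routine calculation from the monoidal monad axioms, and one takes $\lambda^\boxtimes_{A} := \overline{\psi_A}$; the right unitor, the inverses of the unitors, and --- in the symmetric case --- the braiding $\sigma^\boxtimes_{A,B}$ (obtained by verifying that $q_{B,A}\cdot T\sigma_{A,B}$ coequalizes the pair defining $A\boxtimes B$) are handled analogously. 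Compatibility with the free-algebra definitions above is a short computation using the monoidal monad axioms.

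Naturality of each structure morphism in every argument then follows from the uniqueness of the coequalizer factorization combined with the functoriality of $\boxtimes$. The triangle identity reduces, upon precomposing both legs with the epimorphism $q_{A,TE}\boxtimes b$, to $T$ applied to the triangle in $\cC$. The pentagon identity on $((A\boxtimes B)\boxtimes C)\boxtimes D$ is the main obstacle: one must thread its two five-edge legs through the 4-fold free tensor $T((A\otimes B)\otimes(C\otimes D)) = T(A\otimes B)\boxtimes T(C\otimes D)$ using iterated $\alpha^\boxtimes$-factorizations and then precompose with the epimorphism $q_{A,B}\boxtimes q_{C,D}$ --- precisely where the hypothesized $q\boxtimes q$-epis are used --- to reduce the identity to $T$ applied to Mac Lane's pentagon in $\cC$. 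In the symmetric case, involutivity of $\sigma^\boxtimes$ and the hexagon axiom follow by the same $q$- and $q\boxtimes q$-epi reduction strategy.
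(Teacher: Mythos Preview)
Your proposal is correct and follows essentially the same route as the paper: the associator is induced from $T\alpha$ via the two coequalizer hypotheses, the left unitor from the bimorphism $a\cdot T\lambda\cdot\kappa\cdot(1\otimes\eta)$ via Lemma~\ref{lem:bim}, the braiding from $T\sigma$, and each coherence axiom is reduced to the $T$-image of its $\cC$-counterpart by precomposing with the appropriate epic $q$- or $q\boxtimes q$-morphism. The one place where the paper supplies more than your ``analogously'' is the inverse of the unitor: it is constructed by a separate coequalizer argument---showing that $q_{TE,A}\cdot T(\eta_E\otimes 1_A)\cdot T\lambda_A^{-1}:TA\to TE\boxtimes A$ coequalizes $(Ta,\mu_A)$ and hence factors through $a:TA\to A$---rather than by another bimorphism, so that step is routine but not quite parallel to the forward direction.
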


\begin{proof} In this proof, we consider $\mT$-algebras $(A,a)$, $(B,b)$, $(C,c)$, $(D,d)$, as well as $(A',a')$, $(B',b')$, $(C',c')$.

\textit{Associativity.}
By \ref{conv:Tensor} and the definitions of $\alpha$ and $\kappa$, both the inner and outer squares in
\[
\xymatrix@C=5.5em{T((TA\otimes TB)\otimes TC)\ar[d]_{T\alpha}\ar[r]<0.7ex>^-{(\mu\cdot T\kappa)\boxtimes\mu}\ar[r]<-0.7ex>_-{T(a\otimes b)\boxtimes Tc}&T((A\otimes B)\otimes C)\ar[d]^{T\alpha}\\
T(TA\otimes(TB\otimes TC))\ar[r]<-0.7ex>_-{(\mu\cdot T\kappa)\boxtimes\mu}\ar[r]<0.7ex>^-{Ta\boxtimes T(b\otimes c)}&T(A\otimes(B\otimes C))}
\]
commute in $\cC^\mT$. By hypothesis, $q_{A,B}\boxtimes c$ and $a\boxtimes q_{B,C}$ are coequalizers, so there is a $\cC^\mT$-isomorphism $\overline{\alpha}_{A,B,C}:(A\boxtimes B)\boxtimes C\to A\boxtimes(B\boxtimes C)$ induced by $T\alpha$:
\begin{equation*}
\xymatrix@C=4.5em{T((A\otimes B)\otimes C)\ar[r]^-{q\boxtimes c}\ar[d]_{T\alpha}&(A\boxtimes B)\boxtimes C\ar[d]^{\overline{\alpha}}\\
T(A\otimes(B\otimes C))\ar[r]^-{a\boxtimes q}&A\boxtimes(B\boxtimes C)\rlap{\ .}}
\end{equation*}
Note that for $\cC^\mT$-morphisms $f:(A,a)\to(A',a')$, $g:(B,b)\to(B',b,)$, $h:(C,c)\to(C',c')$, the diagram
\begin{equation*}
\xymatrix@C=4em{T((A\otimes B)\otimes C)\ar[r]^-{q\boxtimes c}\ar[d]_{T((f\otimes g)\otimes h)}&(A\boxtimes B)\boxtimes C\ar[d]^{(f\boxtimes g)\boxtimes h}\\
T((A'\otimes B')\otimes C')\ar[r]^-{q\boxtimes c'}&(A'\boxtimes B')\boxtimes C'\rlap{\ ;}}
\end{equation*}
commutes, and there is a similar diagram for $f\boxtimes(g\boxtimes h)$. Naturality of $T\alpha$, commutativity of the diagrams for $(f\boxtimes g)\boxtimes h$, $f\boxtimes(g\boxtimes h)$ and $\overline{\alpha}_{A,B,C}$ together with the fact that $q_{A,B}\boxtimes c$ is epic yield naturality of $\overline{\alpha}$:
\[
\xymatrix{(A\boxtimes B)\boxtimes C\ar[d]_{(f\boxtimes g)\boxtimes h}\ar[r]^-{\overline{\alpha}}&A\boxtimes(B\boxtimes C)\ar[d]^{f\boxtimes(g\boxtimes h)}\\
(A'\boxtimes B')\boxtimes C'\ar[r]^-{\overline{\alpha}}&A'\boxtimes(B'\boxtimes C')\rlap{\ .}}
\]
Commutativity of the coherence diagram
\[
\xymatrix{((A\boxtimes B)\boxtimes C)\boxtimes D\ar[r]^{\overline{\alpha}}\ar[d]_{\overline{\alpha}\boxtimes 1}&(A\boxtimes B)\boxtimes(C\boxtimes D)\ar[r]^{\overline{\alpha}}&A\boxtimes(B\boxtimes(C\boxtimes D))\\
(A\boxtimes(B\boxtimes C))\boxtimes D\ar[rr]^-{\overline{\alpha}}&&A\boxtimes((B\boxtimes C)\boxtimes D)\ar[u]_{1\boxtimes\overline{\alpha}}}
\]
follows from commutativity of the coherence diagram of $T(((A\otimes B)\otimes C)\otimes D)$, the definition and naturality of $\overline{\alpha}$, the fact that $q_{A,B}=a\boxtimes b$, and the hypothesis that $(q_{A,B}\boxtimes c)\boxtimes d\cong q_{A,B}\boxtimes q_{C,D}$ is epic.

\textit{Unitariness.} For a $\mT$-algebra $(A,a)$, the composite $\cC$-morphism 
\[
\xymatrix{TE\otimes A\ar[r]^-{1\otimes\eta}&TE\otimes TA\ar[r]^-{\kappa}&T(E\otimes A)\ar[r]^-{T\lambda}&TA\ar[r]^{a}&A}
\]
is a bimorphism: on one hand, we have
\begin{align*}
a\cdot T(a\cdot T\lambda_A\cdot\kappa_{E,A}&\cdot(1_{TE}\otimes\eta_A))\cdot\kappa_{TE,A}\\
&=a\cdot T\lambda_A\cdot\mu_{E\otimes A}\cdot T\kappa_{E,A}\cdot\kappa_{TE,TA}\cdot(1_{TTE}\otimes T\eta_A)\\
&=a\cdot T\lambda_A\cdot\kappa_{E,A}\cdot(\mu_{E}\otimes\mu_{A})\cdot(1_{TTE}\otimes T\eta_A)\\
&=a\cdot T\lambda_A\cdot\kappa_{E,A}\cdot(\mu_{E}\otimes 1_{TA})\ ,
\end{align*}
and on the other hand,
\begin{align*}
a\cdot T\lambda_A\cdot\kappa_{E,A}&\cdot(1_{TE}\otimes\eta_A)\cdot(\mu_{E}\otimes a)\\
&=a\cdot T\lambda_A\cdot\kappa_{E,A}\cdot(1_{TE}\otimes Ta\cdot\eta_{TA})\cdot(\mu_{E}\otimes 1_{TA})\\
&=a\cdot Ta\cdot T\lambda_{TA}\cdot\kappa_{E,TA}\cdot(1_{TE}\otimes\eta_{TA})\cdot(\mu_{E}\otimes 1_{TA})\\
&=a\cdot \mu_{A}\cdot T(T\lambda_A\cdot\kappa_{E,A}\cdot(\eta_E\otimes 1_{TA}))\cdot\kappa_{E,TA}\cdot(1_{TE}\otimes\eta_{TA})\cdot(\mu_{E}\otimes 1_{TA})\\
&=a\cdot T\lambda_A\cdot\mu_{E\otimes A}\cdot T\kappa_{E,A}\cdot\kappa_{TE,TA}\cdot(T\eta_E\otimes \eta_{TA})\cdot(\mu_{E}\otimes 1_{TA})\\
&=a\cdot T\lambda_A\cdot\kappa_{E,A}\cdot(\mu_{E}\otimes\mu_{A})\cdot(T\eta_E\otimes \eta_{TA})\cdot(\mu_{E}\otimes 1_{TA})\\
&=a\cdot T\lambda_A\cdot\kappa_{E,A}\cdot(\mu_{E}\otimes 1_{TA})\ .
\end{align*}
By Lemma \ref{lem:bim}, there is therefore a unique $\cC^\mT$-morphism $\overline{\lambda}_A$ that makes the diagram
\[
\xymatrix@C=6.5em{T(TE\otimes A)\ar[d]_{q}\ar[r]^-{T(a\cdot T\lambda\cdot\kappa\cdot(1\otimes\eta))}&TA\ar[d]^{a}\\
TE\boxtimes A\ar[r]^-{\overline{\lambda}}&A}
\]
commute. Its inverse is the $\cC^\mT$-morphism induced by $l:=q_{TE,A}\cdot T(\eta_E\otimes 1_A)\cdot T\lambda_A^\inv:TA\to TE\boxtimes A$. Indeed, since $a$ is a coequalizer in $\cC^\mT$ of $(Ta,\mu_A)$, and
\begin{align*}
(q_{TE,A}\cdot T(\eta_E\otimes 1_A)&\cdot T\lambda_A^\inv)\cdot Ta\\
&=q_{TE,A}\cdot T(1_{TE}\otimes a)\cdot T(\eta_E\otimes 1_{TA})\cdot T\lambda_{TA}^\inv\\
&=q_{TE,A}\cdot T(\mu_{E}\otimes a)\cdot T(T\eta_{E}\otimes1_{TA})\cdot T(\eta_{E}\otimes 1_{TA})\cdot T\lambda_{TA}^\inv\\
&=q_{TE,A}\cdot\mu_{TE\otimes A}\cdot T\kappa_{TE,A}\cdot T(T\eta_{E}\otimes1_{TA})\cdot T(\eta_{E}\otimes 1_{TA})\cdot T\lambda_{TA}^\inv\\
&=q_{TE,A}\cdot\mu_{TE\otimes A}\cdot TT(\eta_{E}\otimes1_{A})\cdot T\kappa_{E,A} \cdot T(\eta_{E}\otimes 1_{TA})\cdot T\lambda_{TA}^\inv\\
&=q_{TE,A}\cdot\mu_{TE\otimes A}\cdot TT(\eta_{E}\otimes1_{A})\cdot TT\lambda_{A}^\inv\\
&=(q_{TE,A}\cdot T(\eta_{E}\otimes1_{TA})\cdot T\lambda_{TA}^\inv)\cdot\mu_{A}\ ,
\end{align*}
there is a unique induced $\cC^\mT$-morphism $A\to TE\boxtimes A$ (given by $l\cdot\eta_A$). Using this computation, we can also compose $l$ with $T(a\cdot T\lambda_{A}\cdot\kappa_{TE,A}\cdot(1_{TE}\otimes\eta_A))$ to obtain
\begin{align*}
q_{TE,A}\cdot T(\eta_E\otimes 1_A)&\cdot T\lambda_A^\inv\cdot T(a\cdot T\lambda_{A}\cdot\kappa_{TE,A}\cdot(1_{TE}\otimes\eta_A))\\
&=q_{TE,A}\cdot T(\eta_E\otimes 1_A)\cdot T\lambda_A^\inv\cdot T\lambda_{A}\cdot\mu_{TE\otimes A}\cdot T\kappa_{TE,A}\cdot T(1_{TE}\otimes\eta_A)\\
&=q_{TE,A}\cdot\mu_{TE\otimes A}\cdot T\kappa_{TE,A}\cdot T(T\eta_{E}\otimes\eta_A)\\
&=q_{TE,A}\cdot T(\mu_{E}\otimes a)\cdot T(T\eta_{E}\otimes\eta_A)\\
&=q_{TE,A}\ .
\end{align*}
Since $q_{TE,A}$ is epic, we have $l\cdot\eta_A\cdot\overline{\lambda}_A=1_{TE\boxtimes A}$. With
\[
a\cdot(T(a\cdot T\lambda_{A}\cdot\kappa_{TE,A}\cdot(1_{TE}\otimes\eta_A)))\cdot (T(\eta_E\otimes 1_A)\cdot T\lambda_A^\inv)=a\ ,
\]
we obtain similarly $\overline{\lambda}_A\cdot l\cdot\eta_A=1_{A}$. Hence, $\overline{\lambda}_A$ is an isomorphism in $\cC^\mT$. Naturality of $\overline{\lambda}$ follows from a standard diagram chase involving the defining diagrams of $\overline{\lambda}_A$, $\overline{\lambda}_B$, the facts that $1_{TE}\boxtimes f$ and $f$ are $\cC^\mT$-morphisms, and that $q_{TE,A}$ is epic. The natural isomorphism $\overline{\rho}_A:A\boxtimes TE\to A$ is obtained symmetrically via the defining diagram
\[
\xymatrix@C=6.5em{T(A\otimes TE)\ar[d]_{q}\ar[r]^-{T(a\cdot T\rho\cdot\kappa\cdot(\eta\otimes 1))}&TA\ar[d]^{a}\\
A\boxtimes TE\ar[r]^-{\overline{\rho}}&A\rlap{\ .}}
\]
Commutativity of
\[
\xymatrix@C=1em{(A\boxtimes TE)\boxtimes B\ar[dr]_{\overline{\rho}\boxtimes 1}\ar[rr]^-{\overline{\alpha}}&&A\boxtimes(TE\boxtimes B)\ar[dl]^{1\boxtimes\overline{\lambda}}\\
&A\boxtimes B&}
\]
follows again by a standard diagram chase involving the defining diagrams of the three given morphisms, and the fact that $q_{A,TE}\boxtimes b$ is epic.

\textit{Symmetry.} By using symmetry of the monoidal monad $(\mT,\kappa)$, one obtains the existence of a family of $\cC^\mT$-morphisms $\overline{\sigma}_{A,B}:A\boxtimes B\to B\boxtimes A$ via the diagram
\[
\xymatrix{T(A\otimes B)\ar[r]^-{T\sigma}\ar[d]_{q}&T(B\otimes A)\ar[d]^q\\
A\boxtimes B\ar[r]^-{\overline{\sigma}}&B\boxtimes A\rlap{\ .}}
\]
Naturality of $\overline{\sigma}$, as well as commutativity of the symmetry diagrams for a symmetric monoidal category then follows by straightforward diagrammatic arguments.
\end{proof}


The following result is a more memorable version of Theorem~\ref{thm:BoxtimesMonoidal}.

\begin{cor}\label{cor:BoxtimesMonoidal}
Let $(\mT,\kappa)$ be a monoidal monad on a monoidal category $(\cC,\otimes,E)$ such that $\cC^\mT$ has tensors. If $A\boxtimes(-)$ and $(-)\boxtimes B$ preserve reflexive coequalizer diagrams in $\cC^\mT$ (for all $\mT$-algebras $A$, $B$), then $(\cC^\mT,\boxtimes,TE)$ is a monoidal category whose structure morphisms are induced by those of $(\cC,\otimes,E)$.

Moreover, if $(\cC,\otimes,E)$ and $(\mT,\kappa)$ are symmetric monoidal, then so is $(\cC^\mT,\boxtimes,TE)$.
\end{cor}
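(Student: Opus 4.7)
The plan is to reduce directly to Theorem~\ref{thm:BoxtimesMonoidal}. First, by Proposition~\ref{prop:TensorReflexiveCoeq}, the hypothesis that each $A\boxtimes(-)$ and $(-)\boxtimes B$ preserves reflexive coequalizers is equivalent to saying that the bifunctor $\boxtimes:\cC^\mT\times\cC^\mT\to\cC^\mT$ itself preserves reflexive coequalizers. This reformulation is what lets us transport coequalizers through $\boxtimes$ in both slots simultaneously, which is precisely the kind of data Theorem~\ref{thm:BoxtimesMonoidal} asks for.

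Next, I would check that the two parallel pairs involved are reflexive. The coequalizer defining $q_{A,B}$ is reflexive: the parallel pair $\mu\cdot T\kappa$, $T(a\otimes b)$ admits the common section $T(\eta_A\otimes\eta_B)$, as follows from the compatibility axiom $\kappa\cdot(\eta\otimes\eta)=\eta$ together with the monad and algebra unit laws. Likewise, the canonical Eilenberg--Moore presentation $TTC\rightrightarrows TC\xrightarrow{c}C$ is a reflexive coequalizer with common section $T\eta_C$. Since colimits in the product category $\cC^\mT\times\cC^\mT$ are computed componentwise, the pair $((\mu\cdot T\kappa,\mu),(T(a\otimes b),Tc))$ is reflexive with coequalizer $(A\boxtimes B,C)$ and coequalizing morphism $(q_{A,B},c)$. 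Applying the reflexive-coequalizer-preserving bifunctor $\boxtimes$ to this datum yields exactly the first coequalizer diagram required by Theorem~\ref{thm:BoxtimesMonoidal}; the second follows by the symmetric argument on the other factor.

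The epimorphism hypothesis $q_{A,B}\boxtimes q_{C,D}$ is obtained by the same mechanism: viewing $(q_{A,B},q_{C,D})$ as the coequalizer in $\cC^\mT\times\cC^\mT$ of the product of the two reflexive pairs that define $q_{A,B}$ and $q_{C,D}$, its image $q_{A,B}\boxtimes q_{C,D}$ under $\boxtimes$ is again a coequalizer, hence an epimorphism. Once these three ingredients are in place, Theorem~\ref{thm:BoxtimesMonoidal} produces the desired monoidal structure on $(\cC^\mT,\boxtimes,TE)$, and its ``moreover'' clause delivers the symmetric case. The only potential snag I anticipate is confirming that the common sections used above are genuinely $\cC^\mT$-morphisms (and not merely $\cC$-morphisms), but each is the image under the free-algebra functor $F_\mT$ of a $\cC$-morphism, so this is automatic.
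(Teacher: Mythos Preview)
Your proof is correct and follows essentially the same route as the paper's: reduce to Theorem~\ref{thm:BoxtimesMonoidal} by invoking Proposition~\ref{prop:TensorReflexiveCoeq} to pass from slotwise preservation to preservation by the bifunctor, then observe that both $q_{A,B}$ and the algebra-structure coequalizers are reflexive so that $q_{A,B}\boxtimes c$, $a\boxtimes q_{B,C}$, and $q_{A,B}\boxtimes q_{C,D}$ are the required coequalizers. Your write-up simply makes explicit the common sections and the product-category formulation that the paper leaves implicit.
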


\begin{proof}
The statement follows directly from Theorem~\ref{thm:BoxtimesMonoidal}. Indeed, all $q_{A,B}$ and $\mT$-algebras structures are reflexive coequalizers, so Proposition~\ref{prop:TensorReflexiveCoeq} yields that $q_{A,B}\boxtimes c$,  $a\boxtimes q_{B,C}$ and $q_{A,B}\boxtimes q_{C,D}$ are coequalizers of their respective diagrams.
\end{proof}

%
%

In the case where $(\cC,\otimes,E)$ is closed symmetric monoidal, the closed structure can be lifted to $\cC^\mT$ provided that $\cC$ has equalizers and reflexive coequalizers. 

\begin{cor}\label{cor:thm:BoxtimesMonoidal}
Let $(\mT,\kappa)$ be a monoidal monad on a closed symmetric monoidal category $(\cC,\otimes,E)$ with equalizers and such that $\cC^\mT$ has tensors. Then $(\cC^\mT,\boxtimes,TE)$ is a monoidal category whose structure morphisms are induced by those of $(\cC,\otimes,E)$.
\end{cor}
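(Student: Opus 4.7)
The plan is to reduce the statement to Corollary~\ref{cor:BoxtimesMonoidal}, so the task becomes showing that, for every $\mT$-algebra $(A,a)$, the endofunctor $A\boxtimes(-):\cC^\mT\to\cC^\mT$ preserves reflexive coequalizers, and similarly for $(-)\boxtimes B$. I will in fact establish the stronger fact that these are left adjoints, which is enough since left adjoints preserve all colimits.

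First I would construct, for $\mT$-algebras $(A,a)$ and $(C,c)$, an internal hom object $[A,C]^\mT$ in $\cC^\mT$. Using closedness of $(\cC,\otimes,E)$, denote by $[-,-]$ the internal hom adjoint to $\otimes$ on the right. The internal version of the monad, via the composite $[A,C]\to[TA,TC]\to[TA,C]$ (the first map being the unique morphism induced by closedness making $T$ enriched, and the second being post-composition with $c:TC\to C$), produces one edge of a parallel pair $[A,C]\rightrightarrows[TA,C]$; the other edge is pre-composition with $a:TA\to A$. I would define $[A,C]^\mT$ as the equalizer of this parallel pair, which exists since $\cC$ has equalizers. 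Morally, $[A,C]^\mT$ internalizes the $\mT$-algebra homomorphisms from $(A,a)$ to $(C,c)$. Then one checks that $[A,C]^\mT$ inherits a canonical $\mT$-algebra structure from the $\mT$-algebra structure that $(C,c)$ induces on $[A,C]$ (by post-composition); this uses naturality together with the equations defining a monoidal monad to see that the candidate structure map $T[A,C]^\mT\to[A,C]$ equalizes the two edges and hence factors through $[A,C]^\mT$.

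Next I would verify the adjunction $A\boxtimes(-)\dashv[A,-]^\mT$. By Proposition~\ref{prop:bim}, morphisms $A\boxtimes B\to C$ in $\cC^\mT$ correspond naturally to bimorphisms $A\otimes B\to C$. Under the closed adjunction on $\cC$, a $\cC$-morphism $f:A\otimes B\to C$ transposes to $\widehat f:B\to[A,C]$, and using Proposition~\ref{prop:BimTwoParts} I would check that the two conditions defining a bimorphism translate, respectively, to (i) $\widehat f$ factoring through the equalizer $[A,C]^\mT\hookrightarrow[A,C]$, and (ii) the resulting map $B\to[A,C]^\mT$ being a $\mT$-algebra morphism. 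This exhibits the claimed natural bijection $\cC^\mT(A\boxtimes B,C)\cong\cC^\mT(B,[A,C]^\mT)$, so $A\boxtimes(-)$ is a left adjoint. An entirely analogous argument, using the closed structure on the other variable (available by symmetry), shows that $(-)\boxtimes B$ is a left adjoint as well. Corollary~\ref{cor:BoxtimesMonoidal} then delivers the conclusion.

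The main obstacle I anticipate is the careful bookkeeping needed to translate the two bimorphism conditions of Proposition~\ref{prop:BimTwoParts} into the two adjunction conditions (factoring through the equalizer, and being a $\mT$-algebra morphism). The manipulations are elementary in enriched closed-category style, but rely on correctly internalizing $T$ via the closed structure so that the equalizer really captures the external notion of $\mT$-algebra homomorphism; once that dictionary is set up, the rest is routine diagrammatic verification.
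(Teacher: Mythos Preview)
Your proposal is correct and follows essentially the same approach as the paper: construct an internal hom $[A,-]^\mT$ in $\cC^\mT$ via an equalizer (the paper simply cites Kock's construction rather than spelling it out), observe that this makes $A\boxtimes(-)$ a left adjoint and hence preserves reflexive coequalizers, and conclude via Corollary~\ref{cor:BoxtimesMonoidal}. Your sketch supplies the details that the paper delegates to the reference.
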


\begin{proof}
The equalizer hypothesis allows us to apply the construction of an internal hom in $\cC^\mT$ described in \cite[Theorem~2.2]{Koc:71a} that makes $A\boxtimes(-)$ left adjoint for any $A\in\cC^\mT$ (see also the proof of  \cite[Proposition~1.3]{Koc:71b}). In particular, $A\boxtimes(-)$ preserves reflexive coequalizers, and Corollary~\ref{cor:BoxtimesMonoidal} applies.
\end{proof}

\begin{examples}\label{ex:BoxtimesMonoidal}
\item\label{ex:BoxtimesMonoidal:Trivial} In the case where $(\mT,\kappa)=(\mId,1)$ is the identity monoidal monad on a monoidal category $(\cC,\otimes,E)$, Theorem~\ref{thm:BoxtimesMonoidal} reduces to a tautology, namely that $(\cC,\otimes,E)$ is monoidal.
\item For the free abelian group cartesian monad $(\mAb,\kappa)$ on $\Set$ (Example~\ref{ex:MonoidalMonads}\ref{ex:MonoidalMonads:Ab}), Corollary~\ref{cor:thm:BoxtimesMonoidal} describes the usual tensor product over $\mathbb{Z}$ of the category $\Set^\mAb\cong\AbGrp$ of abelian groups.
\item\label{ex:BoxtimesMonoidal:P} For the monoidal powerset monad $(\mP,\iota)$ on $\Set$ (Example~\ref{ex:MonoidalMonads}\ref{ex:MonoidalMonads:P}), Corollary~\ref{cor:thm:BoxtimesMonoidal} yields the classical tensor product on $\Set^\mP\cong\Sup$ (see \cite{Shm:74}).
\end{examples}

\subsection{Alternative hypotheses}
In the case where  $(\cC,\otimes,E)$ is not closed symmetric monoidal (see Corollary~\ref{cor:thm:BoxtimesMonoidal}), it might be delicate to verify the more general hypotheses of Theorem~\ref{thm:BoxtimesMonoidal} or even of Corollary~\ref{cor:BoxtimesMonoidal} since each of these involve the \emph{induced} tensor $(-\boxtimes-)$. Theorem~\ref{thm:BoxtimesMonoidal:practical} below presents a situation where the hypotheses can be directly tested on the original data $\mT$ and $(\cC,\otimes,E)$. The proof hinges on how certain coequalizers can be tensored.

\begin{prop}\label{prop:BoxtimesMonoidal:practical}
Let $(\mT,\kappa)$ be a monoidal monad on a monoidal category $(\cC,\otimes,E)$, and consider coequalizer diagrams 
\[
\xymatrix{TZ\ar[r]<0.7ex>^-{\mu\cdot Tr}\ar[r]<-0.7ex>_-{Ts}&TY\ar[r]^-{p}&X}\dand
\xymatrix{TZ'\ar[r]<0.7ex>^-{\mu\cdot Tr'}\ar[r]<-0.7ex>_-{Ts'}&TY'\ar[r]^-{p'}&X'}
\]
in $\cC^\mT$  (here, $(X,x)$, $(X',x')$ are $\mT$-algebras, and the objects of the form $TA$ are equipped with their free structure $\mu_A$). Suppose that $T(-\otimes -):\cC^\mT\times\cC^\mT\to\cC^\mT$ preserves the coequalizer diagram of $(p,p')$, that $T(Tp\otimes Tp')$ is an epimorphism, and that the coequalizer $q_{X,X'}:T(X\otimes X')\to X\boxtimes X$ exists.

If the coequalizer of $\mu_{Y\otimes Y'}\cdot T(\kappa_{Y,Y'}\cdot(r\otimes r'))$ and $T(s\otimes s')$ exists $\cC^\mT$, then it can be given by $q_{X,X'}\cdot T(p\cdot\eta_{Y}\otimes p'\cdot\eta_{Y'})$:
\[
\xymatrix@C=5em{T(Z\otimes Z')\ar[r]<0.7ex>^-{\mu\cdot T(\kappa\cdot(r\otimes r'))}\ar[r]<-0.7ex>_-{T(s\otimes s')}&T(Y\otimes Y')\ar[r]^-{q\cdot T(p\cdot\eta\otimes p'\cdot\eta)}&X\boxtimes X'\rlap{\ .}}
\]
\end{prop}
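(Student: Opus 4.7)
The plan is to check that $\bar q := q_{X,X'} \cdot T(p\cdot\eta_Y \otimes p'\cdot\eta_{Y'})$ is a coequalizer in two stages: first that it coequalizes the pair, then that it is universal. For the coequalization, I will compute both $\bar q \cdot T(s\otimes s')$ and $\bar q \cdot \mu_{Y\otimes Y'} \cdot T(\kappa_{Y,Y'} \cdot (r\otimes r'))$; each simplifies to $q_{X,X'} \cdot T(p\cdot r \otimes p'\cdot r')$, the first via naturality of $\eta$, the monad unit law, and the coequalizing property $p\cdot\mu_Y\cdot Tr = p\cdot Ts$; the second via naturality of $\mu$ and $\kappa$, the relation $q_{X,X'}\cdot\mu_{X\otimes X'}\cdot T\kappa_{X,X'} = q_{X,X'}\cdot T(x\otimes x')$ defining the coequalizer $q_{X,X'}$, and the identity $x\cdot T(p\cdot\eta_Y) = p$ coming from $p$ being a $\mT$-algebra morphism.

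For universality, the pivotal identity is
\[
q_{X,X'} \cdot T(p \otimes p') \;=\; \bar q \cdot q_{TY,TY'},
\]
with $q_{TY,TY'} = \mu_{Y\otimes Y'}\cdot T\kappa_{Y,Y'}$ by Convention~\ref{conv:Tensor}; it follows from the factorization $p = x\cdot T(p\cdot\eta_Y)$ (similarly for $p'$) together with the same naturality and coequalizer relations. Given a $\cC^\mT$-morphism $f : T(Y\otimes Y') \to (W,w)$ coequalizing the stated pair, I will verify that $f\cdot q_{TY,TY'}$ coequalizes the tensored pair $(T(\mu_Y\cdot Tr \otimes \mu_{Y'}\cdot Tr'), T(Ts \otimes Ts'))$, using axiom~(3) of the monoidal monad in the form $\kappa\cdot(\mu\otimes\mu) = \mu\cdot T\kappa\cdot\kappa$, monad associativity, and the hypothesis on $f$. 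By the preservation hypothesis, $T(p\otimes p')$ is a coequalizer of that pair, so $f\cdot q_{TY,TY'}$ factors uniquely through it as some $g' : T(X\otimes X') \to W$. I then show that $g'$ coequalizes $\mu_{X\otimes X'}\cdot T\kappa_{X,X'}$ and $T(x\otimes x')$: after precomposing with the epimorphism $T(Tp\otimes Tp')$, both sides of the required equation reduce, via naturality and $x\cdot Tp = p\cdot\mu_Y$, to $f\cdot q_{TY,TY'}\cdot T(\mu_Y \otimes \mu_{Y'})$. The universal property of $q_{X,X'}$ then produces a unique $g : X\boxtimes X'\to W$ with $g\cdot q_{X,X'} = g'$, and $g\cdot\bar q = f$ falls out by axiom~(4), $\kappa\cdot(\eta\otimes\eta) = \eta_{Y\otimes Y'}$, together with the unit law $\mu\cdot T\eta = 1$.

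Uniqueness of $g$ is handled by the pivotal identity: any $g''$ satisfying $g''\cdot\bar q = f$ also satisfies $g''\cdot q_{X,X'}\cdot T(p\otimes p') = g''\cdot\bar q\cdot q_{TY,TY'} = f\cdot q_{TY,TY'}$, forcing $g''\cdot q_{X,X'} = g'$ through the coequalizer $T(p\otimes p')$, and then $g'' = g$ through $q_{X,X'}$. The main obstacle I anticipate is the verification that $f\cdot q_{TY,TY'}$ coequalizes the tensored parallel pair, since this calls for chaining naturality of $\mu$, naturality of $\kappa$, monad associativity, and axiom~(3) in the correct order to convert $\kappa\cdot(\mu\otimes\mu)\cdot(Tr\otimes Tr')$ into a form that can be killed by the hypothesis on $f$; once this step is in place, the epi hypothesis on $T(Tp\otimes Tp')$ makes the descent of $g'$ through $q_{X,X'}$ routine, and the identity $q_{X,X'}\cdot T(p\otimes p') = \bar q\cdot q_{TY,TY'}$ knits existence and uniqueness together.
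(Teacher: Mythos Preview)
Your proof is correct and follows essentially the same strategy as the paper's: factor through the preserved coequalizer $T(p\otimes p')$, then use the epimorphism $T(Tp\otimes Tp')$ to descend through $q_{X,X'}$. The computations you outline all go through; in particular, your ``pivotal identity'' $q_{X,X'}\cdot T(p\otimes p')=\bar q\cdot q_{TY,TY'}$ is exactly the paper's equation~\eqref{eq:prop:ConclCentralArg}, and your verification that $g'$ coequalizes $(\mu\cdot T\kappa,\,T(x\otimes x'))$ after precomposition with $T(Tp\otimes Tp')$ mirrors the paper's diagram chase.

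There is one presentational difference worth noting. The paper assumes the coequalizer of the pair exists as some object $X\circledast X'$ with comparison map $t$, applies the factorization argument to this specific $t$ to obtain $\bar{\bar t}:X\boxtimes X'\to X\circledast X'$, and then exhibits an explicit inverse $u$ to conclude that $\bar{\bar t}$ is an isomorphism. You instead run the same factorization argument for an \emph{arbitrary} $f$ coequalizing the pair, thereby establishing the universal property of $\bar q$ directly. Your route is slightly more economical: it never invokes the existence hypothesis in the statement, so in fact you prove the stronger assertion that $\bar q$ \emph{is} a coequalizer outright, not merely that it agrees with one assumed to exist. The paper's detour through $X\circledast X'$ and the inverse $u$ is unnecessary once one observes, as you do, that the construction works for general $f$.
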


\begin{proof}
By hypothesis, there are coequalizer diagrams
\[
\xymatrix@C=7em{T(TZ\otimes TZ')\ar[r]<0.7ex>^-{T((\mu\cdot Tr)\otimes(\mu\cdot Tr'))}\ar[r]<-0.7ex>_-{T(Ts\otimes Ts')}&T(TY\otimes TY')\ar[r]^-{T(p\otimes p')}&T(X\otimes X')\rlap{\ .}}
\]
and
\[
\xymatrix@C=6em{T(Z\otimes Z')\ar[r]<0.7ex>^-{\mu\cdot T(\kappa\cdot(r\otimes r'))}\ar[r]<-0.7ex>_-{T(s\otimes s')}&T(Y\otimes Y')\ar[r]^-{t}&X\circledast X'}
\]
in $\cC^\mT$. We proceed to show that $X\circledast X'$ and $X\boxtimes X'$ are isomorphic. Since the diagram
\[
\xymatrix@C=7em{T(TZ\otimes TZ')\ar[r]<0.7ex>^-{T((\mu\cdot Tr)\otimes(\mu\cdot Tr'))}\ar[r]<-0.7ex>_-{T(Ts\otimes Ts')}&T(TY\otimes TY')\ar[r]^-{t\cdot\mu\cdot T\kappa}&X\circledast X'}
\]
commutes, there exists a unique $\cC^\mT$-morphism $\overline{t}:T(X\otimes X')\to X\circledast X'$ such that
\begin{equation}\label{eq:prop:TrimEq}
\overline{t}\cdot T(p\otimes p')=t\cdot\mu_{Y\otimes Y'}\cdot T\kappa_{Y,Y'}\ .
\end{equation}
We can therefore consider the diagram 
\[
\xymatrix@C=4em{T(TTY\otimes TTY')\ar[r]^-{T(Tp\otimes Tp')}\ar[d]<0.3em>^{\mu\cdot T\kappa}\ar[d]<-0.3em>_{T(\mu\otimes\mu)}&T(TX\otimes TX')\ar[d]<-0.3em>_{\mu\cdot T\kappa}\ar[d]<0.3em>^{T(x\otimes x')}\\
T(TY\otimes TY')\ar[r]^-{T(p\otimes p')}\ar[d]_{\mu\cdot T\kappa}&T(X\otimes X')\ar[d]^{\overline{t}}\\
T(Y\otimes Y')\ar[r]^-{t}&X\circledast X'}
\]
in which the inner- and outer-upper squares commute, as does the lower one; Proposition~\ref{prop:TensorFree} moreover implies that the two left vertical arrows of the large square can be identified. By hypothesis, $T(Tp\otimes Tp')$ is an epimorphism, so the vertical $\cC^\mT$-morphism $T(TX\otimes TX')\to X\circledast X'$ that makes the large square commute is unique, that is,
\[
\xymatrix@C=4.5em{T(TX\otimes TX')\ar[r]<0.7ex>^-{\mu\cdot T\kappa}\ar[r]<-0.7ex>_-{T(x\otimes x')}&T(X\otimes X')\ar[r]^{\overline{t}}&X\circledast X'}
\]
commutes. Thus, the universal property of $q_{X,X'}$ yields a unique $\cC^\mT$-morphism $\overline{\overline{t}}:X\boxtimes X'\to X\circledast X'$ such that
\[
\overline{\overline{t}}\cdot q_{X,X'}=\overline{t}\ .
\]
Let us verify now that $\overline{\overline{t}}$ is an isomorphism. With \eqref{eq:prop:TrimEq}, one obtains
\begin{equation}\label{eq:prop:PenUltCentralArg}
\overline{\overline{t}}\cdot q_{X,X'}\cdot T(p\otimes p')=t\cdot\mu_{Y\otimes Y'}\cdot T\kappa_{Y,Y'}\ ,
\end{equation}
and therefore
\[
t=\overline{\overline{t}}\cdot q_{X,X'}\cdot T(p\otimes p')\cdot T(\eta_{Y}\otimes\eta_{Y'})
=\overline{\overline{t}}_{X,X'}\cdot q_{X,X'}\cdot T(p\cdot\eta_{Y}\otimes p'\cdot\eta_{Y'})\ .
\]
The equality
\begin{equation}\label{eq:prop:ConclCentralArg}
q_{X,X'}\cdot T(p\cdot\eta_{Y}\otimes p'\cdot\eta_{Y'})\cdot\mu_{Y\otimes Y'}\cdot T\kappa_{Y,Y'}=q_{X,X'}\cdot T(p\otimes p')\ ,
\end{equation}
with Lemma \ref{lem:CoeqSimple} shows that
\[
\xymatrix@C=5em{T(Z\otimes Z')\ar[r]<0.7ex>^-{\mu\cdot T(\kappa\cdot(r\otimes r'))}\ar[r]<-0.7ex>_-{T(s\otimes s')}&T(Y\otimes Y')\ar[r]^-{q\cdot T(p\cdot\eta\otimes p'\cdot\eta)}&X\boxtimes X'}
\]
commutes. The universal property of $t$ then yields a unique $\cC^\mT$-morphism $u:X\circledast X'\to X\boxtimes X'$ such that
\[
q_{X,X'}\cdot T(p\cdot\eta_{Y}\otimes p'\cdot\eta_{Y'})=u\cdot t\ ,
\]
and we can consider the following commutative diagram:
\[
\xymatrix@C=3.5em{T(TY\otimes TY')\ar[r]^-{T(p\otimes p')}\ar[d]_{\mu\cdot T\kappa}&T(X\otimes X')\ar[r]^-{q}\ar[dr]^{\overline{t}}&X\boxtimes X'\ar[d]^{\overline{\overline{t}}}\\
T(Y\otimes Y')\ar[rr]^{t}\ar[d]_{T(\eta\otimes\eta)}&&X\circledast X'\ar[d]^{u}\\
T(TY\otimes TY')\ar[r]^-{T(p\otimes p')}&T(X\otimes X')\ar[r]^-{q}&X\boxtimes X'\rlap{\ .}}
\]
By using \eqref{eq:prop:ConclCentralArg}, one then observes
\[
q_{X,X'}\cdot T(p\otimes p')=u\cdot\overline{\overline{t}}\cdot q_{X,X'}\cdot T(p\otimes p')\ ,
\]
so $u\cdot\overline{\overline{t}}=1_{X\boxtimes X'}$ because both $q_{X,X'}$ and $T(p\otimes p')$ are epimorphisms. By exchanging the displayed upper and lower diagrams, one obtains similarly that $\overline{\overline{t}}\cdot u=1_{X\circledast X'}$, and can conclude that $\overline{\overline{t}}$ is an isomorphism with inverse $u$.
\end{proof}

\begin{cor}\label{cor:TriQ}
Let $(\mT,\kappa)$ be a monoidal monad on a monoidal category $(\cC,\otimes,E)$ with reflexive coequalizers. If $T(X\otimes-)$ and $T(-\otimes Y)$ preserve reflexive coequalizers in $\cC$ (for all $X,Y\in\ob\cC$), then
\[
\xymatrix@C=4.8em{T((TA\otimes TB)\otimes TC)\ar[r]<0.7ex>^-{\mu\cdot T(\kappa\cdot(\kappa\otimes 1))}\ar[r]<-0.7ex>_-{T((a\otimes b)\otimes c)}&T((A\otimes B)\otimes C)\ar[r]^-{q\cdot T(q\cdot\eta\otimes 1)}&(A\boxtimes B)\boxtimes C}
\]
and
\[
\xymatrix@C=4.8em{T(TA\otimes(TB\otimes TC))\ar[r]<0.7ex>^-{\mu\cdot T(\kappa\cdot(1\otimes\kappa))}\ar[r]<-0.7ex>_-{T(a\otimes(b\otimes c))}&T(A\otimes(B\otimes C))\ar[r]^-{q\cdot T(1\otimes q\cdot\eta)}&A\boxtimes(B\boxtimes C)}
\]
are coequalizer diagrams in $\cC^\mT$ (for all $\mT$-algebras $(A,a)$, $(B,b)$ and $(C,c)$).
\end{cor}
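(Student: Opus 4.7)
The plan is to obtain each of the two coequalizer diagrams as a direct application of Proposition~\ref{prop:BoxtimesMonoidal:practical}, feeding in one defining coequalizer $q_{-,-}$ together with one algebra structure viewed as a coequalizer.

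As a preliminary, setting $X=E$ and transporting along $T\lambda$ shows that $T:\cC\to\cC$ itself preserves reflexive coequalizers, so by Proposition~\ref{prop:CreatReflCoeq} the forgetful functor $\cC^\mT\to\cC$ creates them. In particular $\cC^\mT$ has reflexive coequalizers, and therefore has tensors, so every $q_{A,B}$ exists. Moreover, the parallel pair appearing in the first diagram of the statement is reflexive in $\cC^\mT$, with common section $T((\eta_A\otimes\eta_B)\otimes\eta_C)$, so its coequalizer exists.

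For the first diagram, I would apply Proposition~\ref{prop:BoxtimesMonoidal:practical} with $p:=q_{A,B}$ (so $Y=A\otimes B$, $r=\kappa_{A,B}$, $s=a\otimes b$, $X=A\boxtimes B$) and $p':=c$ viewed as the coequalizer of $\mu_C$ and $Tc$ (so $Y'=C$, $r'=1_{TC}$, $s'=c$, $X'=C$). Both $p$ and $p'$ are reflexive coequalizers in $\cC^\mT$, and by Proposition~\ref{prop:TensorReflexiveCoeq} together with the hypothesis, $T(-\otimes-)$ preserves reflexive coequalizers in each variable and therefore preserves the coequalizer diagram of $(p,p')$. To verify that $T(Tp\otimes Tp')$ is epic, decompose it as $T(1\otimes Tp')\cdot T(Tp\otimes 1)$; since $T$ preserves reflexive coequalizers, $Tp$ and $Tp'$ are themselves reflexive coequalizers in $\cC$, and applying the hypothesis to the partial functors $T((-)\otimes-)$ and $T(-\otimes(-))$ shows that each factor is again a reflexive coequalizer, hence epic. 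The remaining hypothesis, existence of $q_{A\boxtimes B,C}$, follows from $\cC^\mT$ having tensors. Proposition~\ref{prop:BoxtimesMonoidal:practical} then identifies the coequalizer as $q_{A\boxtimes B,C}\cdot T(q_{A,B}\cdot\eta_{A\otimes B}\otimes c\cdot\eta_C)$, which reduces to the stated $q\cdot T(q\cdot\eta\otimes 1)$ via the algebra unit law $c\cdot\eta_C=1_C$. The second coequalizer diagram follows by the symmetric choice $p:=a$, $p':=q_{B,C}$.

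The only non-routine step in either application is the epicness of $T(Tp\otimes Tp')$, which becomes transparent once one writes it as a composition of two instances of the preservation hypothesis; everything else is a translation between the general statement of Proposition~\ref{prop:BoxtimesMonoidal:practical} and the concrete coequalizers at hand.
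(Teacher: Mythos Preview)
Your proposal is correct and follows essentially the same approach as the paper: both apply Proposition~\ref{prop:BoxtimesMonoidal:practical} to the pair of reflexive coequalizers $q_{A,B}$ and $c$ (resp.\ $a$ and $q_{B,C}$), verifying its hypotheses via Proposition~\ref{prop:TensorReflexiveCoeq}, the observation that $T\cong T(E\otimes-)$ preserves reflexive coequalizers so Proposition~\ref{prop:CreatReflCoeq} applies, and the common section $T((\eta_A\otimes\eta_B)\otimes\eta_C)$. The only cosmetic difference is your treatment of the epicness of $T(Tp\otimes Tp')$ by explicit factorization, whereas the paper simply notes it is itself a reflexive coequalizer; both arguments are valid.
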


\begin{proof}
We prove the statement for the first diagram, the proof for the second following similarly. For this, we only need to verify that the hypotheses of Proposition~\ref{prop:BoxtimesMonoidal:practical} are verified for the reflexive coequalizer diagrams
\[
\xymatrix@C=3em{T(TA\otimes TB)\ar[r]<0.7ex>^-{\mu\cdot T\kappa}\ar[r]<-0.7ex>_-{T(a\otimes b)}&T(A\otimes B)\ar[r]^-{q}&A\boxtimes B}\dand
\xymatrix@C=3em{TTC\ar[r]<0.7ex>^-{\mu}\ar[r]<-0.7ex>_-{Tc}&TC\ar[r]^-{c}&C\rlap{\ .}}
\]
Since $T\cong T(E\otimes -)$ preserves reflexive coequalizers, these lift to $\cC^\mT$ by Proposition~\ref{prop:CreatReflCoeq}, and $T(q_{A,B}\otimes 1_C)$ is therefore a coequalizer in $\cC^\mT$; thus, $T(q_{A,B}\otimes c)$ is one too by Proposition~\ref{prop:TensorReflexiveCoeq}. Similarly, $T(Tq_{A,B}\otimes Tc)$ is a reflexive coequalizer in $\cC^\mT$ and consequently an epimorphism. Finally, the coequalizer of
\[
\mu_{(A\otimes B)\otimes C}\cdot T(\kappa_{A\otimes B,C}\cdot(\kappa_{A\otimes B}\otimes 1_C))\dand T(a\otimes b)\otimes Tc
\]
exists in $\cC^\mT$ because it is a reflexive pair (split by $T((\eta_A\otimes\eta_B)\otimes \eta_C)$).
\end{proof}

\begin{cor}\label{cor:QuaQ}
Let $(\mT,\kappa)$ be a monoidal monad on a monoidal category $(\cC,\otimes,E)$ with reflexive coequalizers. If $T(X\otimes-)$ and $T(-\otimes Y)$ preserve reflexive coequalizers in $\cC$ (for all $X,Y\in\ob\cC$), then
\[
\xymatrix@C=5em{T((A\otimes B)\otimes(C\otimes D))\ar[r]^-{q\cdot T(q\cdot\eta\otimes q\cdot\eta)}&(A\boxtimes B)\boxtimes(C\boxtimes D)}
\]
is an epimorphism in $\cC^\mT$ (for all $\mT$-algebras $(A,a)$, $(B,b)$, $(C,c)$, and $(D,d)$).
\end{cor}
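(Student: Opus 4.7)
The plan is to invoke Proposition~\ref{prop:BoxtimesMonoidal:practical} with the two tensor-defining coequalizer diagrams of $\cC^\mT$, taking $Y=A\otimes B$, $Y'=C\otimes D$, $Z=TA\otimes TB$, $Z'=TC\otimes TD$, $r=\kappa_{A,B}$, $r'=\kappa_{C,D}$, $s=a\otimes b$, $s'=c\otimes d$, $p=q_{A,B}$ and $p'=q_{C,D}$, so that $X=A\boxtimes B$ and $X'=C\boxtimes D$. The conclusion of that proposition is that $q_{X,X'}\cdot T(p\cdot\eta_{Y}\otimes p'\cdot\eta_{Y'})$, which under the identifications above is exactly the displayed $\cC^\mT$-morphism, is a coequalizer in $\cC^\mT$. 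Since every coequalizer is epic, the corollary will follow immediately.

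What remains is to check the three hypotheses of Proposition~\ref{prop:BoxtimesMonoidal:practical}. Exactly as in the proof of Corollary~\ref{cor:TriQ}, each of the pairs $(\mu_{A\otimes B}\cdot T\kappa_{A,B},T(a\otimes b))$ and $(\mu_{C\otimes D}\cdot T\kappa_{C,D},T(c\otimes d))$ is reflexive, with common splittings $T(\eta_A\otimes\eta_B)$ and $T(\eta_C\otimes\eta_D)$ respectively; Proposition~\ref{prop:CreatReflCoeq} together with $T\cong T(E\otimes-)$ then ensures that $p$ and $p'$ arise from reflexive coequalizer diagrams in $\cC$ that lift to $\cC^\mT$. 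Combining the hypothesis that $T(X\otimes-)$ and $T(-\otimes Y)$ preserve reflexive coequalizers with Proposition~\ref{prop:TensorReflexiveCoeq} gives that $T(-\otimes-):\cC\times\cC\to\cC$ preserves reflexive coequalizers, and after lifting via Proposition~\ref{prop:CreatReflCoeq} that $T(-\otimes-):\cC^\mT\times\cC^\mT\to\cC^\mT$ preserves the coequalizer of $(p,p')$. Applying the same preservation to the reflexive coequalizer pair $(Tp,Tp')$ (obtained by applying $T$, which preserves reflexive coequalizers, to $p$ and $p'$) shows that $T(Tp\otimes Tp')$ is itself a reflexive coequalizer, hence an epimorphism.

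Existence of $q_{X,X'}$ in $\cC^\mT$ is similarly granted by Proposition~\ref{prop:CreatReflCoeq}, since the pair defining it is reflexive. For the last hypothesis, the pair $(\mu_{Y\otimes Y'}\cdot T(\kappa_{Y,Y'}\cdot(\kappa_{A,B}\otimes\kappa_{C,D})),\ T((a\otimes b)\otimes(c\otimes d)))$ is split by $T((\eta_A\otimes\eta_B)\otimes(\eta_C\otimes\eta_D))$ (a direct computation using condition~\eqref{cond:compatibility} of the monoidal monad and the unit axiom $a\cdot\eta_A=1_A$), so its coequalizer exists in $\cC^\mT$ by one more application of Proposition~\ref{prop:CreatReflCoeq}. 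No step is a genuine obstacle; the real content is the careful matching of the data of the $\boxtimes$-coequalizers to the parameters of Proposition~\ref{prop:BoxtimesMonoidal:practical}, a bookkeeping step that directly parallels the proof of Corollary~\ref{cor:TriQ}.
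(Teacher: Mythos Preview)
Your proposal is correct and follows exactly the paper's approach: apply Proposition~\ref{prop:BoxtimesMonoidal:practical} to the coequalizer diagrams defining $q_{A,B}$ and $q_{C,D}$, and conclude that the displayed morphism is a (reflexive) coequalizer, hence epic. Your verification of the hypotheses simply spells out details the paper leaves implicit by pointing back to the parallel argument in Corollary~\ref{cor:TriQ}.
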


\begin{proof}
The given morphism is in fact a reflexive coequalizer in $\cC^\mT$, obtained by applying Proposition~\ref{prop:BoxtimesMonoidal:practical} to the coequalizer diagrams of $q_{A,B}$ and $q_{C,D}$.
\end{proof}

\begin{thm}\label{thm:BoxtimesMonoidal:practical}
Let $(\mT,\kappa)$ be a monoidal monad on a monoidal category $(\cC,\otimes,E)$ with reflexive coequalizers. If $T(X\otimes-)$ and $T(-\otimes Y)$ preserve reflexive coequalizers in $\cC$ (for all $X,Y\in\ob\cC$), then $(\cC^\mT,\boxtimes,TE)$ is a monoidal category whose structure morphisms are induced by those of $(\cC,\otimes,E)$.

Moreover, if $(\cC,\otimes,E)$ and $(\mT,\kappa)$ are symmetric monoidal, then so is $(\cC^\mT,\boxtimes,TE)$.
\end{thm}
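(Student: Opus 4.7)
The plan is to verify the three hypotheses of Theorem~\ref{thm:BoxtimesMonoidal} using Corollaries~\ref{cor:TriQ} and~\ref{cor:QuaQ}, after translating between the two formulations through the identifications of~\ref{conv:Tensor}.

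First I would check that $\cC^\mT$ has tensors. Since $T \cong T(E \otimes -)$ preserves reflexive coequalizers by hypothesis, Proposition~\ref{prop:CreatReflCoeq} ensures that the forgetful functor $\cC^\mT \to \cC$ creates them. The defining parallel pair of $q_{A,B}$ in~\ref{ssec:TensorProd} is reflexive (split by $T\eta_{A \otimes B}$), so $q_{A,B}$ exists in $\cC^\mT$ for all $\mT$-algebras $(A,a)$ and $(B,b)$.

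Next, I would identify the two coequalizer diagrams demanded by Theorem~\ref{thm:BoxtimesMonoidal} with those supplied by Corollary~\ref{cor:TriQ}. Under the identifications $T(TA \otimes TB) \boxtimes TTC = T((TA \otimes TB) \otimes TC)$ and $T(A \otimes B) \boxtimes TC = T((A \otimes B) \otimes C)$ of~\ref{conv:Tensor}, the equation $T(a \otimes b) \boxtimes Tc = T((a \otimes b) \otimes c)$ holds by Proposition~\ref{prop:TensorFree}(2). For the remaining map, I would compute $(\mu \cdot T\kappa) \boxtimes \mu$ using the defining property $(f \boxtimes g) \cdot q = q \cdot T(f \otimes g)$: the monoidal-monad axiom $\mu \cdot T\kappa \cdot \kappa_{TX,TY} = \kappa \cdot (\mu \otimes \mu)$ together with naturality of $\kappa$ yields $(\mu \cdot T\kappa) \boxtimes \mu = \mu \cdot T(\kappa \cdot (\kappa \otimes 1))$, matching the first pair in Corollary~\ref{cor:TriQ}; the second diagram is symmetric. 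For the epimorphism hypothesis, the defining equation for $q_{A,B} \boxtimes q_{C,D}$, composed on the right with $T(\eta \otimes \eta)$, simplifies via axiom~(4) (namely $\kappa \cdot (\eta \otimes \eta) = \eta$, whence $\mu \cdot T\kappa \cdot T(\eta \otimes \eta) = 1$) to $q_{A,B} \boxtimes q_{C,D} = q \cdot T(q \cdot \eta \otimes q \cdot \eta)$, which is an epimorphism by Corollary~\ref{cor:QuaQ}.

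With all three hypotheses of Theorem~\ref{thm:BoxtimesMonoidal} in place, both the monoidal conclusion and the symmetric refinement follow directly from that theorem. The main obstacle, though essentially notational, is the identification step in the middle paragraph: Corollaries~\ref{cor:TriQ} and~\ref{cor:QuaQ} are phrased in terms of morphisms of the shape $q \cdot T(\cdots)$ built from free algebras in $\cC$, whereas Theorem~\ref{thm:BoxtimesMonoidal} phrases its hypotheses directly in terms of $(-\boxtimes-)$, so one must carefully unpack~\ref{conv:Tensor} and invoke the monoidal-monad axioms to see that the two families of maps coincide.
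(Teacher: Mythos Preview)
Your approach is correct and essentially matches the paper's proof: verify the hypotheses of Theorem~\ref{thm:BoxtimesMonoidal} by identifying its coequalizer diagrams with those of Corollary~\ref{cor:TriQ} via~\ref{conv:Tensor}, and use Corollary~\ref{cor:QuaQ} for the epimorphism hypothesis. The computations you sketch for the parallel pairs and for $q_{A,B}\boxtimes q_{C,D}$ are exactly what the paper does.

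There is one small omission. You identify the parallel pairs in the diagrams of Theorem~\ref{thm:BoxtimesMonoidal} with those in Corollary~\ref{cor:TriQ}, but you do not explicitly identify the coequalizer \emph{maps}: you still need $q_{A,B}\boxtimes c = q_{A\boxtimes B,C}\cdot T(q_{A,B}\cdot\eta_{A\otimes B}\otimes 1_C)$ (and the symmetric statement for $a\boxtimes q_{B,C}$). Matching the parallel pairs only tells you that the pair from Theorem~\ref{thm:BoxtimesMonoidal} \emph{has} a coequalizer in $\cC^\mT$; it does not yet say that the specific map $q_{A,B}\boxtimes c$ is that coequalizer. The paper closes this gap using $q_{A\boxtimes B,C} = (a\bowtie b)\boxtimes c$ from~\ref{conv:Tensor}, functoriality of $\boxtimes$, and the fact that $q_{A,B}$ is a $\cC^\mT$-morphism (so that $(a\bowtie b)\cdot T(q_{A,B}\cdot\eta_{A\otimes B}) = q_{A,B}\cdot\mu_{A\otimes B}\cdot T\eta_{A\otimes B} = q_{A,B}$). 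This is the same unwinding you already carried out for $q_{A,B}\boxtimes q_{C,D}$, so it is not a conceptual gap, just an unstated step.
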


\begin{proof}
We verify the hypotheses of Theorem~\ref{thm:BoxtimesMonoidal}. By Proposition~\ref{prop:CreatReflCoeq}, $q_{A,B}$ exists and is therefore is a reflexive coequalizer of $(\mu_{A\otimes B}\cdot T\kappa_{A,B},T(a\otimes b))$ in $\cC^\mT$. With the convention of \ref{conv:Tensor}, the first coequalizer diagram in Corollary~\ref{cor:TriQ} is in fact
\[
\xymatrix@C=4.8em{T(TA\otimes TB)\boxtimes TTC\ar[r]<0.7ex>^-{\mu\cdot T\kappa\boxtimes \mu}\ar[r]<-0.7ex>_-{T(a\otimes b)\boxtimes Tc}&T(A\otimes B)\boxtimes TC\ar[r]^-{q\boxtimes c}&(A\boxtimes B)\boxtimes C\rlap{\ .}}
\]
Indeed, we already know that $T((a\otimes b)\otimes c)=T(a\otimes b)\boxtimes Tc$; moreover, with $q_{TA,TB}=\mu_{A\otimes B}\cdot T\kappa_{A,B}$ in the diagram
\[
\xymatrix@C=4.8em{T(T(TA\otimes TB)\otimes TTC)\ar[d]_{\mu\cdot T\kappa}\ar[r]^-{T(\mu\cdot T\kappa\otimes\mu)}&T(T(A\otimes B)\otimes TC)\ar[d]^{\mu\cdot T\kappa}\\
T((TA\otimes TB)\otimes TC)\ar[r]^-{\mu\cdot T(\kappa\cdot(\kappa\otimes 1))}&T((A\otimes B)\otimes C)\rlap{\ ,}}
\]
we see that $\mu_{(A\otimes B)\otimes C}\cdot T(\kappa_{A\otimes B,C}\cdot(\kappa_{A,B}\otimes 1_{TC}))=\mu_{A\otimes B}\cdot T\kappa_{A,B}\boxtimes\mu_C$; finally, since $q_{A,B}$ is a $\cC^\mT$-morphism and $q_{A\boxtimes B,C}=(a\bowtie b)\boxtimes c$, we have by functoriality of $(-\boxtimes-)$:
\begin{align*}
q_{A\boxtimes B,C}\cdot T(q_{A,B}\cdot\eta_{A\otimes B}\otimes 1_C)&=((a\bowtie b)\boxtimes c)\cdot (T(q_{A,B}\cdot\eta_{A\otimes B})\boxtimes 1_{TC})\\
&=(q_{A,B}\boxtimes c)\cdot ((\mu_{A\otimes B}\cdot T\eta_{A\otimes B})\boxtimes 1_{TC})=q_{A,B}\boxtimes c\ .
\end{align*}
By symmetry, the second coequalizer diagram of  Corollary~\ref{cor:TriQ} shows that $a\boxtimes q_{B,C}$ is the coequalizer of $(\mu_A\boxtimes\mu_{B\otimes C}\cdot T\kappa_{B,C},Ta\boxtimes T(b\otimes c))$. By using that $q_{A,B}=a\boxtimes b$ and $q_{C,D}=c\boxtimes d$ are $\cC^\mT$-morphisms in Corollary~\ref{cor:QuaQ}, we obtain that
\[
q_{A,B}\boxtimes q_{C,D}=q_{A\boxtimes B,C\boxtimes D}\cdot T(q_{A,B}\cdot\eta_{A\otimes B}\otimes q_{C,D}\cdot\eta_{C\otimes D})
\]
is an epimorphism. Hence, Theorem~\ref{thm:BoxtimesMonoidal} applies. 
\end{proof}

\begin{ex}\label{ex:BoxtimesMonoidal:practical}
For the monoidal category $(\cC,+,\varnothing)$ of Example~\ref{ex:MonoidalMonads}\ref{ex:MonoidalMonads:coprod}, we note that the functor $X+(-)$ preserves coequalizers for all $X\in\ob\cC$. In general, $\cC$ is not monoidal closed (for example, if $\cC=\Set$ and $X$ is a non-empty set, the functor $X+(-)$ is not left adjoint, as it does not preserve coproducts), so Corollary~\ref{cor:thm:BoxtimesMonoidal} does not apply. Nevertheless, if $\cC^\mT$ has tensors, then $(-\boxtimes-)$ turns out to be the binary coproduct in $\cC^\mT$ (see \cite[Proposition~2]{Lin:69}). In this case, $(\cC^\mT,\boxtimes,T\varnothing)$ is monoidal because of the universal property of coproducts; alternatively, $(-\boxtimes-)$ is left adjoint to the diagonal functor, so Corollary~\ref{cor:BoxtimesMonoidal} applies. 

In the case where $\cC$ has reflexive coequalizers and the monad functor $T$ preserves them (so that $\cC^\mT$ has reflexive coequalizers by Proposition~\ref{prop:CreatReflCoeq}), Theorem~\ref{thm:BoxtimesMonoidal:practical} states that $(\cC^\mT,\boxtimes,T\varnothing)$ is monoidal, a result that essentially summarizes the previous discussion.
\end{ex}


\subsection{Monoidal monad morphisms and Eilenberg--Moore categories}
Once a monoidal structure on the Eilenberg--Moore category has been established, it is not surprising that monoidal monad morphisms induce monoidal functors, although these need not be strict as in  Proposition~\ref{prop:MonoidalMorphismLift}.

\begin{prop}\label{prop:MonadMorphMonoidal}
Let $(\cC,\otimes,E)$ be a monoidal category, and $(\mS,\iota)$, $(\mT,\kappa)$ monoidal monads on $\cC$. Suppose that $\cC^\mT$ and $\cC^\mS$ both have tensors that make $(\cC^\mT,\boxtimes,TE)$ and $(\cC^\mS,\circledast,SE)$ monoidal categories with structures induced by those of $(\cC,\otimes,E)$. If $\phi:(\mS,\iota)\to(\mT,\kappa)$ is a monoidal monad morphism, then the induced functor
\[
\cC^\phi:\cC^\mT\to\cC^\mS
\]
(that commutes with the forgetful functors to $\cC$) is itself monoidal. 
\end{prop}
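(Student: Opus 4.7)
The plan is to equip $\cC^\phi$ with constraint morphisms
\[
\psi_{A,B}\colon \cC^\phi(A,a)\circledast\cC^\phi(B,b)\to\cC^\phi\bigl((A,a)\boxtimes(B,b)\bigr)
\dand
\psi_0:=\phi_E\colon SE\to\cC^\phi(TE),
\]
and then verify the coherence conditions by reducing them, via the universal properties of the coequalizers $q^\mS$ and $q^\mT$, to the coherence already available in $(\cC,\otimes,E)$ (or rather in $\cC^\mT$ and $\cC^\mS$, whose structure morphisms are themselves induced by those of $\cC$ via Theorem~\ref{thm:BoxtimesMonoidal}).

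The morphism $\psi_0$ is an $\mS$-algebra morphism from $(SE,\mu^\mS_E)$ to $(TE,\mu^\mT_E\cdot\phi_{TE})$ directly by the monad-morphism identity for $\phi$. To construct $\psi_{A,B}$, I would show that the $\cC$-morphism $q^\mT_{A,B}\cdot\phi_{A\otimes B}\colon S(A\otimes B)\to A\boxtimes B$ is an $\mS$-morphism (immediate from $q^\mT$ being $\mT$-homomorphic and from $\phi\cdot\mu^\mS=\mu^\mT\cdot T\phi\cdot\phi S$), and that it coequalizes the parallel pair whose $\mS$-coequalizer is $q^\mS_{A,B}$ (with the $\mS$-algebra structures $a\cdot\phi_A$, $b\cdot\phi_B$ on $A,B$). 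The latter is the key calculation: using naturality of $\phi$, the monoidal-monad-morphism identity $\phi_{A\otimes B}\cdot\iota_{A,B}=\kappa_{A,B}\cdot(\phi_A\otimes\phi_B)$, and the coequalizer property of $q^\mT$, one rewrites
\[
q^\mT_{A,B}\cdot\phi_{A\otimes B}\cdot\mu^\mS_{A\otimes B}\cdot S\iota_{A,B}
\;=\;q^\mT_{A,B}\cdot\mu^\mT_{A\otimes B}\cdot T\kappa_{A,B}\cdot T(\phi_A\otimes\phi_B)\cdot\phi_{SA\otimes SB}
\]
and then applies $q^\mT_{A,B}\cdot\mu^\mT_{A\otimes B}\cdot T\kappa_{A,B}=q^\mT_{A,B}\cdot T(a\otimes b)$ to collapse the right-hand side to $q^\mT_{A,B}\cdot\phi_{A\otimes B}\cdot S(a\phi_A\otimes b\phi_B)$. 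The universal property of $q^\mS_{A,B}$ then yields a unique $\psi_{A,B}$ with $\psi_{A,B}\cdot q^\mS_{A,B}=q^\mT_{A,B}\cdot\phi_{A\otimes B}$, and naturality in $A,B$ is an immediate coequalizer diagram chase.

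With these data in hand, the remaining work is to verify the two unit diagrams, the associativity pentagon, and (in the symmetric case) the braiding compatibility. For each, the strategy is to precompose with the relevant coequalizer(s) from $\cC^\mS$ so that every morphism in the diagram is expressed in terms of $q^\mT\cdot\phi_{(-)}$, $T(f)$ and the structure morphisms of $\cC$; the defining equations of $\psi$, $\overline{\alpha}^\mS$, $\overline{\lambda}^\mS$, $\overline{\rho}^\mS$ (and their $\mT$-analogues) then reduce each diagram to coherence already established in the proof of Theorem~\ref{thm:BoxtimesMonoidal}. The unit and braiding diagrams are direct one-step chases.

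The main obstacle is the associativity pentagon, because it involves a three-fold tensor and all three constraint morphisms simultaneously. Here I would precompose both composites with the epimorphism $q^\mS_{A\circledast B,C}\cdot S(q^\mS_{A,B}\cdot\eta^\mS_{A\otimes B}\otimes 1_{SC})$ out of $S((A\otimes B)\otimes C)$ (an epimorphism by the $\circledast$-version of Corollary~\ref{cor:QuaQ}, whose hypotheses are inherited from those granting the monoidal structure on $\cC^\mS$). After this precomposition, both sides of the pentagon become expressible as $q^\mT\cdot\phi_{A\otimes(B\otimes C)}\cdot S\alpha$ applied to the same initial datum, so equality follows from the associativity constraint of $\cC$ together with naturality of $\phi$. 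This completes the verification that $(\cC^\phi,\psi,\psi_0)$ is monoidal, with strong/symmetric variants recovered whenever $\phi$ is an isomorphism at the unit or $(\mS,\iota)$, $(\mT,\kappa)$ are symmetric.
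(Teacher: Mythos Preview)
Your construction is exactly the paper's: it writes $\overline{\phi}_{A,B}$ for your $\psi_{A,B}$, obtains it from the same square with the two coequalizer rows linked by $\phi\cdot S(\phi\otimes\phi)$ and $\phi$, and then dismisses the coherence diagrams as ``standard diagram chases'' without the detail you supply. One small point: your appeal to Corollary~\ref{cor:QuaQ} for the epimorphism in the associativity check is not quite licensed, since the proposition only assumes the conclusion of Theorem~\ref{thm:BoxtimesMonoidal} (not the reflexive-coequalizer hypotheses of Theorem~\ref{thm:BoxtimesMonoidal:practical} that Corollary~\ref{cor:QuaQ} needs); the epimorphism you want is $p_{A,B}\circledast c$, which is a coequalizer directly by the hypotheses of Theorem~\ref{thm:BoxtimesMonoidal} applied to $\mS$, and agrees with your composite via the identification in~\ref{conv:Tensor} (your $1_{SC}$ should read $1_C$).
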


\begin{proof}
Set $\mS=(S,\nu,\delta)$ and $\mT=(T,\mu,\eta)$. Recall that the algebraic functor $\cC^\phi$ sends a $\mT$-algebra $(A,a)$ to the $\mS$-algebra $(A,a\cdot\phi_A)$ and is identical on morphisms. For $\mT$-algebras $(A,a)$ and $(B,b)$, one has $\phi_{A\otimes B}\cdot\nu_{A\otimes B}=\mu_{A\otimes B}\cdot\phi_{T(A\otimes B)}\cdot S\phi_{A\otimes B}$, that is, $\phi_{A\otimes B}:S(A\otimes B)\to T(A\otimes B)$ is a $\cC^\mS$-morphism. Hence, the inner- and outer-left squares in the diagram
\[
\xymatrix@C=5.5em{S(SA\otimes SB)\ar[r]<0.7ex>^-{\nu\cdot S\iota}\ar[r]<-0.7ex>_-{S((a\cdot\phi)\otimes(b\cdot\phi))}\ar[d]_{\phi\cdot S(\phi\otimes\phi)}&S(A\otimes B)\ar[r]^{p}\ar[d]^{\phi}&A\circledast B\ar@{.>}[d]^{\overline{\phi}}\\
T(TA\otimes TB)\ar[r]<-0.7ex>_-{\mu\cdot T\kappa}\ar[r]<0.7ex>^-{T(a\otimes b)}&T(A\otimes B)\ar[r]^{q}&A\boxtimes B}
\]
commute in $\cC^\mS$. If $p_{A,B}$ denotes the coequalizer of the upper row, there is consequently a unique $\cC^\mS$-morphism $\overline{\phi}_{A,B}:(A\circledast B,(a\cdot\phi_A)\bowtie(b\cdot\phi_B))\to(A\boxtimes B,(a\bowtie b)\cdot\phi_{A\boxtimes B})$ that makes the square on the right commute. These $\overline{\phi}_{A,B}$ (for $A,B\in\ob\cC^\mT$) are easily seen to be natural in $A$ and $B$. 

A standard diagram chase involving defining diagrams of the induced structures yields commutativity of
\[
\xymatrix{(A\circledast B)\circledast C\ar[r]^{\overline{\phi}\circledast1}\ar[d]_{\overline{\alpha}}&(A\boxtimes B)\circledast C\ar[r]^{\overline{\phi}}&(A\boxtimes B)\boxtimes C\ar[d]^{\overline{\alpha}}\\
A\circledast(B\circledast C)\ar[r]^{1\circledast\overline{\phi}}&A\circledast(B\boxtimes C)\ar[r]^{\overline{\phi}}&A\boxtimes(B\boxtimes C)\rlap{\ .}}
\]
Similarly, commutativity of 
\[
\xymatrix{SE\circledast A\ar[r]^-{\phi\circledast1}\ar[dr]_{\overline{\lambda}}&TE\circledast A\ar[r]^{\overline{\phi}}&TE\boxtimes A\ar[dl]^{\overline{\lambda}}\\
&A&}
\qquad\xymatrix{\mbox{}\ar@{}[d]|{\textstyle{\text{and}}}\\\mbox{}}\qquad
\xymatrix{A\circledast SE\ar[r]^-{1\circledast\phi}\ar[dr]_{\overline{\rho}}&A\circledast TE\ar[r]^{\overline{\phi}}&A\boxtimes TE\ar[dl]^{\overline{\rho}}\\
&A&}
\]
is routinely verified. Hence, the functor $\cC^\phi$ with $\overline{\phi}:\cC^\phi(-)\circledast\cC^\phi(-)\to\cC^\phi(-\boxtimes-)$ and $\phi_E:SE\to TE$ is monoidal.
\end{proof}

\begin{ex}\label{ex:MonoidalMonMorph}
The unit monad morphism $\eta:\mId\to\mT$ of a monoidal monad $\mT$ (Example~\ref{ex:MonoidalMonadsMorph}) induces the forgetful functor $\cC^\eta:\cC^\mT\to\cC^\mId\cong\cC$ that is therefore monoidal (whenever $(\cC^\mT,\otimes,TE)$ is monoidal with structures morphisms induced by $(\cC,\otimes,E)$).
\end{ex}

\begin{rem}
Example~\ref{ex:MonoidalMonMorph} shows that in the context of Theorem~\ref{thm:BoxtimesMonoidal}, the forgetful functor $G^\mT:\cC^\mT\to\cC$ is monoidal. Standard diagram chases also show that the left adjoint $F^\mT:\cC\to\cC^\mT$ is strong monoidal with respect to $\overline{\kappa}:F^\mT(-)\boxtimes F^\mT(-)\to F^\mT(-\otimes-)$ and $1_{TE}:TE\to TE$ (where $\overline{\kappa}_{X,Y}:(TX\boxtimes TY,\mu_X\bowtie\mu_Y)\to(T(X\otimes Y),\mu_{X\otimes Y})$ denotes the $\cC^\mT$-morphism induced by the bimorphism $\kappa_{X,Y}$ for all $X,Y\in\ob\cC$). Hence, the adjunction $F^\mT\dashv G^\mT:\cC^\mT\to\cC$ is monoidal, mirroring the closed case studied in \cite{Koc:71a} in relation with commutative monads. 
\end{rem}

\section{Actions}\label{sec:Actions}

\subsection{Monoids}\label{Monoids:Monoidal}
A \df{monoid} in a monoidal category $\cC$ is a $\cC$-object $M$ together with two morphisms
\[
m:M\otimes M\to M\ ,\quad e:E\to M
\]
such that the diagrams
\begin{equation*}
\xymatrix@C=2.5em{(M\otimes M)\otimes M\ar[d]_-{m\otimes 1}\ar[r]^-{\alpha}&M\otimes(M\otimes M)\ar[r]^-{1\otimes m}&M\otimes M\ar[d]^-{m}\\
M\otimes M\ar[rr]^-{m}&&M}\quad
\xymatrix@C=2.5em{E\otimes M\ar[dr]_-{\lambda}\ar[r]^-{e\otimes 1}&M\otimes M\ar[d]^-{m}&M\otimes E\ar[dl]^-{\rho}\ar[l]_-{1\otimes e}\\
&M&}
\end{equation*}
commute. A \df{homomorphism of monoids} $f:(N,n,d)\to(M,m,e)$ is a $\cC$-morphism $f:N\to M$ such that the diagrams
\[
\xymatrix@C=35pt{N\otimes N\ar[d]_-{n}\ar[r]^-{f\otimes f}&M\otimes M\ar[d]^-{m}\\
M\ar[r]^-{f}&M}\qquad\qquad
\xymatrix@C=25pt{&E\ar[dl]_-{d}\ar[dr]^-{e}&\\
N\ar[rr]^-{f}&&M}
\]
commute. The category of monoids in $\cC$ with their homomorphisms is denoted by $\Mon(\cC)$.

\begin{examples}\label{Ex:Monoids}
\item For $\Set$ with its cartesian structure, $\Mon(\Set)=\Mon$, the usual category of monoids with their homomorphisms.
\item A unital ring $R$ is an abelian group that is also a monoid in which the distributive laws hold, that is, the multiplication $R\times R\to R$ is $\mathbb{Z}$-bilinear and is therefore equivalently described as a group homomorphism $R\otimes_{\mathbb{Z}}R\to R$. Hence, unital rings are precisely the monoids in $\AbGrp$ (with its usual tensor product), and $\Mon(\AbGrp)=\Rng$ is the category of unital rings and their homomorphisms.
\item A quantale $V$ is a complete lattice with a monoid operation $(-\otimes-):V\times V\to V$ that preserves suprema in each variable; with the tensor product in $\Sup$, the category of complete lattices and sup-preserving maps, the monoid operation may equivalently be considered a morphism $V\otimes V\to V$ in $\Sup$. Hence, one has $\Mon(\Sup)=\Qnt$, the category of quantales and their homomorphisms.
\end{examples}

\subsection{Actions in a monoidal category}
Let $\cC$ be a monoidal category, and $M=(M,m,e)$ a monoid in $\cC$. An \df{$M$-action} (more precisely, a \df{left $M$-action}) is an object $A$ in $\cC$ that comes with a $\cC$-morphism
\[
a:M\otimes A\to A
\]
that makes the following diagrams commute:
\[
\xymatrix@C=6ex{M\otimes(M\otimes A)\ar[d]_-{(m\otimes 1)\cdot\alpha^\inv}\ar[r]^-{1\otimes a}&M\otimes A\ar[d]^{a}\\
M\otimes A\ar[r]^-{a}&A}\qquad\qquad
\xymatrix@C=6ex{E\otimes A\ar[r]^-{e\otimes 1}\ar[dr]_-{\lambda}&M\otimes A\ar[d]^{a}\\
&A\rlap{\ .}}
\]
A $\cC$-morphism $f:A\to B$ between $M$-actions $(A,a)$ and $(B,b)$ is \df{equivariant} if the diagram
\[
\xymatrix@C=6ex{M\otimes A\ar[d]_{a}\ar[r]^{1\otimes f}&M\otimes B\ar[d]^{b}\\
A\ar[r]^f&B}
\]
commutes. The category of $M$-actions and equivariant $\cC$-morphisms is denoted by $\cC^M$, a notation that is motivated by the following result.

\begin{prop}\label{prop:ActionEM}
A monoid $(M,m,e)$ in a monoidal category $\cC$ gives rise to a monad $M=(M\otimes(-),\tilde{m},\tilde{e})$ on $\cC$, where 
\[
\tilde{m}_A=(m\otimes 1_A)\cdot\alpha_{M,M,A}^\inv\qquad\text{and}\qquad\tilde{e}_A=(e\otimes 1_A)\cdot\lambda_A^\inv
\]
for all $A\in\ob\cC$. The Eilenberg--Moore category $\cC^M$ of this monad is the category of $M$-actions and equivariant $\cC$-morphisms.
\end{prop}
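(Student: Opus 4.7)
The plan is to check that the triple $(M\otimes(-),\tilde m,\tilde e)$ satisfies the monad axioms, and then identify its Eilenberg--Moore algebras and their morphisms with the $M$-actions and equivariant morphisms introduced in the previous paragraph.

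First, I would establish the naturality of $\tilde e$ and $\tilde m$. Naturality of $\tilde e_A=(e\otimes 1_A)\cdot\lambda_A^{-1}$ is immediate from the naturality of $\lambda^{-1}$ together with the bifunctoriality of $\otimes$; naturality of $\tilde m_A=(m\otimes 1_A)\cdot\alpha_{M,M,A}^{-1}$ is likewise immediate from the naturality of $\alpha^{-1}$. Next, I would verify the two unit axioms $\tilde m_A\cdot\tilde e_{M\otimes A}=1_{M\otimes A}=\tilde m_A\cdot(1_M\otimes\tilde e_A)$. Expanding the definitions yields composites of $m$, $e\otimes 1$, $\alpha^{-1}$, and $\lambda^{-1}$, which collapse to the identity after invoking naturality of $\alpha$ and $\lambda$, the triangle coherence axiom of $(\cC,\otimes,E)$, and the two unit laws satisfied by $(M,m,e)$ as a monoid in $\cC$.

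For the associativity axiom $\tilde m_A\cdot\tilde m_{M\otimes A}=\tilde m_A\cdot(1_M\otimes\tilde m_A)$, both sides rewrite as composites built from two applications of $m\otimes 1$ and a rearrangement of parentheses via the $\alpha^{-1}$'s. The equality of these two composites follows from the pentagon coherence diagram of $(\cC,\otimes,E)$ (applied to $M,M,M,A$) combined with the associativity of the monoid multiplication $m$. This is the only step that is more than bookkeeping, and I expect it to be the main technical obstacle, since it requires matching the reparenthesizations produced by the two sides of the monad axiom with a single application of the pentagon.

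Finally, I would identify the Eilenberg--Moore category with $\cC^M$. An algebra structure on $A\in\ob\cC$ is a $\cC$-morphism $a:M\otimes A\to A$ satisfying $a\cdot\tilde e_A=1_A$ and $a\cdot\tilde m_A=a\cdot(1_M\otimes a)$. Substituting the formulas for $\tilde e_A$ and $\tilde m_A$ and composing with the isomorphisms $\lambda_A$ and $\alpha_{M,M,A}$ converts these two equations into exactly the unit and associativity diagrams for an $M$-action. Morphisms of algebras are $\cC$-morphisms $f:A\to B$ with $f\cdot a=b\cdot(1_M\otimes f)$, which is verbatim the equivariance diagram. Hence the assignment on objects and the identity on morphisms constitute an isomorphism of categories between the Eilenberg--Moore category of the constructed monad and the category $\cC^M$ of $M$-actions, justifying the notation.
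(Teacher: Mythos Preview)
Your proposal is correct and is precisely the expanded form of the paper's own proof, which consists of the two words ``Direct verifications.'' You have simply spelled out those verifications---naturality, unit and associativity laws via the triangle and pentagon coherence, and the identification of Eilenberg--Moore algebras with $M$-actions---so there is nothing to add or correct.
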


\begin{proof}
Direct verifications.
\end{proof}

\begin{examples}\label{ex:Actions}
\item If $\cC=\Set$, and $M$ is a monoid, the category $\Set^M$ is the usual category of $M$-actions and equivariant maps.
\item The monoidal structure of $\AbGrp$ is given by the tensor product over $\mathbb{Z}$, and a monoid $R$ in $\AbGrp$ is a ring. Hence, $\AbGrp^R$ is the usual category $R\text{-}\Mod$ of left $R$-modules.
\item\label{ex:Actions:P} Given a quantale $\V=(V,\otimes,k)$, that is, a monoid is $\Sup$, the category $\Sup^\V$ is described as follows. A $\V$-action $X$ in $\Sup$ is a complete lattice $X$ together with a bimorphism $(-)\cdot(-):\V\times X\to X$ in $\Sup$ such that
\[
(u\otimes v)\cdot x=u\cdot(v\cdot x)\quad,\qquad k\cdot x=x\ ,
\]
for all $v\in\V$, $x\in X$, and a sup-map $f:X\to Y$ is equivariant whenever
\[
f(v\cdot x)=v\cdot f(x)
\]
for all $v\in\V$, $x\in X$.
\end{examples}

%

\subsection{Monadic actions}
In general, monadic functors do not compose. In the case of actions in $\cC^\mT$ however, they do (Theorem~\ref{thm:ActionsAreMonadic} below).

\begin{prop}\label{prop:MonadAction}
Let $(\mT,\kappa)$ be a monoidal monad on a monoidal category $(\cC,\otimes,E)$ such that $\cC^\mT$ has tensors, and $(\cC^\mT,\boxtimes,TE)$ a monoidal category whose structure morphisms are induced by those of $(\cC,\otimes,E)$.

A monoid $(M,\xi)$ in $(\cC^\mT,\boxtimes,TE)$ induces a monad $M\boxtimes\mT$ on $\cC$ whose functor is $M\boxtimes T(-):\cC\to\cC$, and whose multiplication and unit are given by their components at $X\in\ob\cC$ as follows:
\begin{align*}
\tilde{\mu}_X&=(m\boxtimes 1_{TX})\cdot\overline{\alpha}_{M,M,TX}^\inv\cdot(1_M\boxtimes(\xi\bowtie\mu_X))\ ,\\
\tilde{\eta}_X&=(e\boxtimes 1_{TX})\cdot\overline{\lambda}_{TX}^\inv\cdot\eta_X\ .
\end{align*}
\end{prop}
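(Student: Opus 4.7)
My plan is to recognize $M\boxtimes\mT$ as the monad arising from a composition of Eilenberg--Moore adjunctions, so that the monad axioms require no direct verification. First I would apply Proposition~\ref{prop:ActionEM} inside the monoidal category $(\cC^\mT,\boxtimes,TE)$: the monoid $(M,\xi)$ then induces a monad $M\boxtimes(-)$ on $\cC^\mT$ whose multiplication at a $\mT$-algebra $A$ is $(m\boxtimes 1_A)\cdot\overline{\alpha}^\inv_{M,M,A}$ and whose unit is $(e\boxtimes 1_A)\cdot\overline{\lambda}^\inv_A$; this arises from an adjunction $L\dashv U:(\cC^\mT)^M\to\cC^\mT$ in the usual way. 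Composing with the Eilenberg--Moore adjunction $F^\mT\dashv G^\mT:\cC^\mT\to\cC$ produces an adjunction $LF^\mT\dashv G^\mT U:(\cC^\mT)^M\to\cC$, and because both right adjoints act as the identity on underlying $\cC$-objects, the associated monad on $\cC$ has underlying functor $G^\mT U L F^\mT=M\boxtimes T(-)$, as demanded by the statement.

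Next I would match the unit and multiplication of this composite monad against the stated formulas at $X\in\ob\cC$. The standard composite-unit formula gives $\tilde\eta_X=G^\mT(\eta^L_{F^\mT X})\cdot\eta_X$, where $\eta^L_{(A,a)}=(e\boxtimes 1_A)\cdot\overline{\lambda}^\inv_A$ is the unit of $L\dashv U$; this instantiates directly to $(e\boxtimes 1_{TX})\cdot\overline{\lambda}^\inv_{TX}\cdot\eta_X$, as required. For the multiplication, the composite-counit formula at the free object $LF^\mT X$ is $\epsilon^L_{LF^\mT X}\cdot L\,\epsilon^\mT_{ULF^\mT X}$. The key identification, obtained from convention~\ref{conv:Tensor}, is that the $\mT$-algebra structure on $M\boxtimes TX$ is $\xi\bowtie\mu_X$, so $\epsilon^\mT_{ULF^\mT X}=\xi\bowtie\mu_X$, giving $L\,\epsilon^\mT_{ULF^\mT X}=1_M\boxtimes(\xi\bowtie\mu_X)$; the outer counit $\epsilon^L_{LF^\mT X}$ is the free $M$-action $(m\boxtimes 1_{TX})\cdot\overline{\alpha}^\inv_{M,M,TX}$. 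Stacking these and applying $G^\mT U$ recovers exactly the prescribed $\tilde\mu_X$.

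The main difficulty is purely bookkeeping: one must confirm that $ULF^\mT X$ really has underlying $\mT$-algebra $(M\boxtimes TX,\xi\bowtie\mu_X)$, which rests on convention~\ref{conv:Tensor} together with the construction of $(-\boxtimes-)$ in~\ref{ssec:TensorProd}, and one must check that the two adjunctions compose as expected. Once these identifications are in place, no monad axiom needs verifying by hand: associativity and unitality of $\tilde\mu$ with respect to $\tilde\eta$ follow automatically from the general principle that the composition of two adjunctions is again an adjunction, and hence induces a monad.
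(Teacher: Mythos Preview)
Your proposal is correct and follows exactly the paper's approach: the paper's proof simply observes that composing the adjunctions $(\cC^\mT)^M\rightleftarrows\cC^\mT$ (from Proposition~\ref{prop:ActionEM}) and $\cC^\mT\rightleftarrows\cC$ yields an adjunction whose induced monad is the one described. You have spelled out in more detail than the paper why the composite unit and multiplication take the stated form, but the strategy is identical.
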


\begin{proof}
The composite of the adjunctions
\[
\xymatrix@C=3.5em{(\cC^\mT)^M\ar[r]<-1ex>\ar@{}[r]|-{\bot}&\cC^\mT\ar[r]<-1ex>\ar@{}[r]|-{\bot}\ar[l]<-1ex>_-{M\boxtimes(-)}&\cC\ar[l]<-1ex>_-{T}}
\]
yields an adjunction $\xymatrix@C=3.5em{(\cC^\mT)^M\ar[r]<-1ex>\ar@{}[r]|-{\bot}&\cC\ar[l]<-1ex>_-{M\boxtimes T(-)}}$ that induces the described monad.
\end{proof}

\begin{lem}\label{lem:MndMorphToAction}
Let $(\mT,\kappa)$ be a monoidal monad on a monoidal category $(\cC,\otimes,E)$ such that $\cC^\mT$ has tensors, and $(\cC^\mT,\boxtimes,TE)$ a monoidal category whose structure morphisms are induced by those of $(\cC,\otimes,E)$.

The $\cC$-morphisms $(e\boxtimes 1_{TX})\cdot\overline{\lambda}_{TX}^\inv:TX\to M\boxtimes TX$ are the components of a monad morphism $\tau:\mT\to M\boxtimes\mT$.
\end{lem}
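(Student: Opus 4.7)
The plan is to verify the three conditions making $\tau_X := (e\boxtimes 1_{TX})\cdot\overline{\lambda}_{TX}^{-1}$ a monad morphism: naturality in $X$, compatibility with the units ($\tau\cdot\eta = \tilde{\eta}$), and compatibility with the multiplications ($\tau\cdot\mu = \tilde{\mu}\cdot(1_M\boxtimes T\tau)\cdot\tau_T$). Naturality is immediate from naturality of $\overline{\lambda}^{-1}$ applied to the $\cC^\mT$-morphism $Tf$ (for $f:X\to Y$ in $\cC$) together with functoriality of $\boxtimes$; the unit compatibility is literally the formula recorded for $\tilde{\eta}_X$ in Proposition~\ref{prop:MonadAction}.

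For the multiplication axiom the crucial preliminary observation is that $\tau_X$ is itself a $\cC^\mT$-morphism $(TX,\mu_X)\to(M\boxtimes TX,\xi\bowtie\mu_X)$, since it is the composite of the $\cC^\mT$-morphisms $\overline{\lambda}_{TX}^{-1}$ (an isomorphism of $\mT$-algebras) and $e\boxtimes 1_{TX}$ (which uses that the monoid unit $e:TE\to M$ is a $\cC^\mT$-morphism). Consequently $(\xi\bowtie\mu_X)\cdot T\tau_X = \tau_X\cdot\mu_X$, and unfolding $\tilde{\mu}_X$ via Proposition~\ref{prop:MonadAction} and collapsing with functoriality of $\boxtimes$ rewrites the right-hand side of the axiom as
\[
(m\boxtimes 1_{TX})\cdot\overline{\alpha}_{M,M,TX}^{-1}\cdot(1_M\boxtimes\tau_X)\cdot\tau_X\cdot\mu_X\,.
\]

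To finish, recall that Proposition~\ref{prop:ActionEM} applied to the monoid $(M,m,e)$ in $\cC^\mT$ furnishes a monad $M\boxtimes(-)$ on $\cC^\mT$ with unit $\tilde{e}_A := (e\boxtimes 1_A)\cdot\overline{\lambda}_A^{-1}$ and multiplication $\tilde{m}_A := (m\boxtimes 1_A)\cdot\overline{\alpha}_{M,M,A}^{-1}$; in particular $\tilde{e}_{TX}=\tau_X$. Naturality of $\tilde{e}$ applied to the $\cC^\mT$-morphism $\tau_X$ then yields $(1_M\boxtimes\tau_X)\cdot\tau_X = \tilde{e}_{M\boxtimes TX}\cdot\tau_X$, so that the expression above becomes $\tilde{m}_{TX}\cdot\tilde{e}_{M\boxtimes TX}\cdot\tau_X\cdot\mu_X$, and the unit axiom for the monad $(M\boxtimes(-),\tilde{m},\tilde{e})$ on $\cC^\mT$ collapses the prefix to the identity, leaving $\tau_X\cdot\mu_X$, as required.

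The main obstacle is pure bookkeeping: keeping straight which composites qualify as $\cC^\mT$-morphisms so that naturality of $\overline{\lambda}$ and of $\tilde{e}$ may be invoked, and juggling the several instances of $\boxtimes$-functoriality. No new conceptual ingredient is needed beyond transporting the already-established monad identities of $M\boxtimes(-)$ on $\cC^\mT$ back to $\cC$ along the free algebras $TX$.
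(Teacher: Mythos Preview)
Your argument is correct; the route you take is a bit different from the paper's and arguably more transparent. For the multiplication compatibility the paper performs a direct diagram chase: after using that $\tau_X$ is a $\cC^\mT$-morphism (exactly as you do), it expands all occurrences of $e$ and $\overline{\lambda}^{-1}$, pushes everything down to $TE$, and finishes with Kelly's coherence identity $(\overline{\lambda}_{TE}\boxtimes 1_{TX})\cdot\overline{\alpha}_{TE,TE,TX}^{-1}=\overline{\lambda}_{TE\boxtimes TX}$. You instead recognise $\tau_X=\tilde{e}_{TX}$ and reduce the whole verification to the left unit law $\tilde{m}_{TX}\cdot\tilde{e}_{M\boxtimes TX}=1$ of the monad $M\boxtimes(-)$ on $\cC^\mT$; since that law (via Proposition~\ref{prop:ActionEM}) is itself a repackaging of the same coherence triangle, the two proofs meet at the same point, but your packaging hides the coherence bookkeeping inside an already-established monad axiom. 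One small wording issue: the passage from $(1_M\boxtimes(\tau_X\cdot\mu_X))\cdot\tau_{TX}$ to $(1_M\boxtimes\tau_X)\cdot\tau_X\cdot\mu_X$ is not mere ``functoriality of $\boxtimes$'' but also uses naturality of $\tilde{e}$ at the $\cC^\mT$-morphism $\mu_X$ (equivalently, naturality of $\overline{\lambda}^{-1}$); you invoke that naturality explicitly a line later for $\tau_X$, so just say so here as well.
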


\begin{proof}
Naturality of the components $\tau_{X}:=(e\boxtimes 1_{TX})\cdot\overline{\lambda}_{TX}^\inv$ (for $X\in\ob\cC$) is immediate. Agreement on the units is also easily verified:
\[
\tilde{\eta}_X=(e\boxtimes 1_{TX})\cdot\overline{\lambda}_{TX}^\inv\cdot\eta_X=\tau_X\cdot\eta_X\ .
\]
For the multiplications, one has
\begin{align*}
\tilde{\mu}_X\cdot(1_M\boxtimes T\tau_X)\cdot\tau_{TX}&=(m\boxtimes 1_{TX})\cdot\overline{\alpha}_{M,M,TX}^\inv\cdot(1_M\boxtimes((e\boxtimes 1_{TX})\cdot\overline{\lambda}_{TX}^\inv\cdot\mu_X))\cdot\tau_{TX}\\
&=(\overline{\lambda}_M\boxtimes 1_{TX})\cdot((1_{TE}\boxtimes e)\boxtimes 1_{TX})\cdot\overline{\alpha}_{TE,TE,TX}^\inv\cdot(1_{TE}\boxtimes(\overline{\lambda}_{TX}^\inv\cdot\mu_X))\cdot\overline{\lambda}_{TX}^\inv\\
&=(e\boxtimes 1_{TX})\cdot(\overline{\lambda}_{TE}\boxtimes 1_{TX})\cdot\overline{\alpha}_{TE,TE,TX}^\inv\cdot\overline{\lambda}_{TE\boxtimes TX}^\inv\cdot\overline{\lambda}_{TX}^\inv\cdot\mu_X\\
&=(e\boxtimes 1_{TX})\cdot\overline{\lambda}_{TX}^\inv\cdot\mu_X=\tau_X\cdot\mu_X
\end{align*}
(we use \cite{Kel:64} for the penultimate equality), so $\tau$ is indeed a monad morphism.
\end{proof}

\begin{thm}\label{thm:ActionsAreMonadic}
Let $(\mT,\kappa)$ be a monoidal monad on a monoidal category $(\cC,\otimes,E)$ such that $\cC^\mT$ has tensors, and $(\cC^\mT,\boxtimes,TE)$ a monoidal category whose structure morphisms are induced by those of $(\cC,\otimes,E)$.

For a monoid $(M,\xi)$ in $(\cC^\mT,\boxtimes,TE)$, there is an isomorphism between the category of algebras of the monad $M\boxtimes\mT$ and the category of $M$-actions in $\cC^\mT$:
\[
\cC^{M\boxtimes\mT}\cong(\cC^\mT)^M\ .
\]
In particular, the forgetful functor $(\cC^\mT)^M\to\cC$ is strictly monadic.
\end{thm}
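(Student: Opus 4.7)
The plan is to construct an explicit inverse $L:\cC^{M\boxtimes\mT}\to(\cC^\mT)^M$ to the canonical comparison functor $K:(\cC^\mT)^M\to\cC^{M\boxtimes\mT}$ arising from the composite adjunction of Proposition~\ref{prop:MonadAction}. On objects, $K$ sends an $M$-action $((X,a),\beta)$—comprising a $\mT$-algebra $(X,a)$ together with a $\cC^\mT$-morphism $\beta:M\boxtimes X\to X$ satisfying the action axioms—to the $M\boxtimes\mT$-algebra $(X,\beta\cdot(1_M\boxtimes a))$, acting as the identity on underlying $\cC$-morphisms.

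To define $L$ on an $M\boxtimes\mT$-algebra $(X,\alpha)$, I proceed in two steps. First, set $a:=\alpha\cdot\tau_X$, where $\tau:\mT\to M\boxtimes\mT$ is the monad morphism of Lemma~\ref{lem:MndMorphToAction}; since $\tau$ respects units and multiplications, the algebra axioms for $\alpha$ immediately yield those for $(X,a)$. Second, to obtain the $M$-action $\beta:M\boxtimes X\to X$ in $\cC^\mT$, consider the $\cC$-morphism
\[
\beta^*:=\alpha\cdot q_{M,TX}\cdot\eta_{M\otimes TX}\cdot(1_M\otimes\eta_X):M\otimes X\to X\ .
\]
I verify that $\beta^*$ is a $\cC^\mT$-bimorphism for $(M,\xi)$ and $(X,a)$ by unfolding the bimorphism condition and reducing it via the algebra axioms for $\alpha$—specifically, the explicit formulas for $\tilde{\eta}_X$ and $\tilde{\mu}_X$ from Proposition~\ref{prop:MonadAction}—together with naturality. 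By Proposition~\ref{prop:bim}, $\beta^*$ induces a unique $\cC^\mT$-morphism $\beta:M\boxtimes X\to X$. The unit and associativity axioms for $\beta$ are then verified by reducing to identities on the representing bimorphisms, using the monoid axioms of $(M,m,e)$ and the coherence of the induced isomorphisms $\overline{\alpha}$, $\overline{\lambda}$.

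The equalities $KL=1$ and $LK=1$ follow from the uniqueness in Proposition~\ref{prop:bim}, since both functors ultimately manipulate the same underlying bimorphism data. Functoriality of $L$ and bijection on morphisms are immediate, as morphisms in either category are $\cC$-morphisms preserving equivalent equational structure. Under the resulting isomorphism $(\cC^\mT)^M\cong\cC^{M\boxtimes\mT}$, the composite forgetful $(\cC^\mT)^M\to\cC$ is identified with the Eilenberg--Moore forgetful $\cC^{M\boxtimes\mT}\to\cC$, which is strictly monadic by construction.

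The main obstacle is the bimorphism verification for $\beta^*$ and the subsequent checks of the action axioms for the induced $\beta$, which require patient diagrammatic bookkeeping combining the algebra axioms for $\alpha$, the explicit forms of $\tilde{\mu}_X$ and $\tilde{\eta}_X$, the monad-morphism property of $\tau$, the monoid axioms for $(M,m,e)$, and the coherence of the induced structure isomorphisms $\overline{\alpha}$, $\overline{\lambda}$, $\overline{\rho}$ on $(\cC^\mT,\boxtimes,TE)$.
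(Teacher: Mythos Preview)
Your proposal is correct and follows essentially the same route as the paper: both define the inverse on objects by first extracting the $\mT$-algebra structure $a=\alpha\cdot\tau_X$ via the monad morphism $\tau$ of Lemma~\ref{lem:MndMorphToAction}, and then producing the action $\beta$ from the same $\cC$-morphism $\alpha\cdot q_{M,TX}\cdot\eta_{M\otimes TX}\cdot(1_M\otimes\eta_X)$. The only cosmetic difference is that you package the construction of $\beta$ through the bimorphism representation of Proposition~\ref{prop:bim}, whereas the paper works directly with the coequalizer universal property of $q_{M,X}$ (these are equivalent by Lemma~\ref{lem:bim}); likewise, for the action axioms the paper cancels the epimorphisms $1_M\boxtimes b$ rather than comparing representing bimorphisms, but this amounts to the same diagram chase.
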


\begin{proof}
The comparison functor $K:(\cC^\mT)^M\to\cC^{M\boxtimes\mT}$ sends  a $\mT$-algebra $(A,a_1)$ with an action $a_2:M\boxtimes A\to A$ in $\cC^\mT$ to the $(M\boxtimes\mT)$-algebra $(A,a_2\cdot(1_M\boxtimes a_1))$. We proceed to verify that $K$ is an isomorphism.

The monad morphism $\tau:\mT\to M\boxtimes\mT$ of Lemma~\ref{lem:MndMorphToAction} induces a functor $\cC^\tau:\cC^{M\boxtimes\mT}\to\cC^\mT$ that sends a $(M\boxtimes\mT)$-algebra $(A,a)$ to the $\mT$-algebra $(A,a\cdot\tau_A)$ and commutes with the forgetful functors to $\cC$. Set $a_1:=a\cdot\tau_A$, so that 
\[
\xymatrix@C=3.5em{T(TM\otimes TA)\ar[r]<0.7ex>^-{\mu\cdot T\kappa}\ar[r]<-0.7ex>_-{T(\xi\otimes a_1)}&T(M\otimes A)\ar[r]^-{q}&M\boxtimes A}
\]
is a coequalizer diagram. There is then a unique $\cC^\mT$-morphism $a_2:M\boxtimes A\to A$ such that
\[
\xymatrix{T(M\otimes A)\ar[rr]^-{q}\ar[d]_{T(1\otimes\eta)}&&M\boxtimes A\ar[d]^{a_2}\\
T(M\otimes TA)\ar[r]^-{q}&M\boxtimes TA\ar[r]^-{a}&A}
\]
commutes. To see this, we use the universal property of $q_{M,A}$. Indeed, $\tau_{A}=(e\boxtimes 1_{TA})\cdot\overline{\lambda}_{TA}^\inv:TA\to M\boxtimes TA$ is a $\cC^\mT$-morphism, so the diagram
\[
\xymatrix@C=7.5em{T(TM\otimes TA)\ar[r]^-{T(\xi\otimes\eta)}\ar[dr]_{T(\xi\otimes 1)}&T(M\otimes TTA)\ar[r]^-{T(1\otimes T((e\boxtimes 1)\cdot\overline{\lambda}^\inv))}\ar[d]^{T(1\otimes \mu)}&T(M\otimes T(M\boxtimes TA))\ar[d]^{T(1\otimes(\xi\bowtie\mu))}\\
&T(M\otimes TA)\ar[r]^-{T(1\otimes((e\boxtimes 1)\cdot\overline{\lambda}^\inv))}&T(M\otimes(M\boxtimes TA))}
\]
commutes; by definition of a $(M\boxtimes\mT)$-algebra $(A,a)$, the diagram
\[
\xymatrix@C=4em{T(M\otimes T(M\boxtimes TA))\ar[d]_-{T(1\otimes(\xi\bowtie\mu))}\ar[r]^-{q}&M\boxtimes T(M\boxtimes TA)\ar[d]_{1\boxtimes(\xi\bowtie\mu)}\ar[r]^-{1\boxtimes Ta}&M\boxtimes TA\ar[dr]+<-1ex,1.5ex>^-{a}&\\
T(M\otimes(M\boxtimes TA))\ar[r]^-{q}&M\boxtimes(M\boxtimes TA))\ar[r]^-{(m\boxtimes 1)\cdot\overline{\alpha}^\inv}&M\boxtimes TA\ar[r]^-{a}&A}
\]
also commutes. By glueing these diagrams together along $T(1_M\otimes(\xi\bowtie\mu_A))$, one obtains, with $a_1=a\cdot(e\boxtimes 1_{TA})\cdot\overline{\lambda}_{TA}^\inv$ and $q_{M,TA}\cdot T(1\otimes Ta)=(1_M\boxtimes Ta)\cdot q_{M,T(M\boxtimes TA)}$,
\begin{align*}
a\cdot q_{M,TA}&\cdot T(1_M\otimes\eta_A)\cdot T(\xi\otimes a_1)\\
&=a\cdot q_{M,TA}\cdot T(1_M\otimes Ta_1)\cdot T(\xi\otimes\eta_{TA})\\
&=a\cdot (1_M\boxtimes Ta)\cdot q_{M,T(M\boxtimes TA)}\cdot T(1_M\otimes T((e\boxtimes 1_{TA})\cdot\overline{\lambda}_{TA}^\inv))\cdot T(\xi\otimes\eta_{TA})\\
&=a\cdot(m\boxtimes 1_{TA})\cdot\overline{\alpha}_{M,M,TA}^\inv\cdot q_{M,M\boxtimes TA}\cdot T(1_M\otimes((e\boxtimes 1_{TA})\cdot\overline{\lambda}_{TA}^\inv))\cdot T(\xi\otimes 1_{TA})\\
&=a\cdot(m\boxtimes 1_{TA})\cdot\overline{\alpha}_{M,M,TA}^\inv\cdot(1_M\boxtimes((e\boxtimes 1_{TA})\cdot\overline{\lambda}_{TA}^\inv))\cdot q_{M,TA}\cdot T(\xi\otimes 1_{TA})\\
&=a\cdot(\overline{\rho}_{M}\boxtimes 1_{TA})\cdot\overline{\alpha}_{M,TE,TA}^\inv\cdot(1_M\boxtimes\overline{\lambda}_{TA}^\inv)\cdot q_{M,TA}\cdot T(\xi\otimes 1_{TA})\\
&=a\cdot q_{M,TA}\cdot T(\xi\otimes 1_{TA})\ .
\intertext{Since one also has}
a\cdot q_{M,TA}&\cdot T(1_M\otimes\eta_A)\cdot \mu_{M\otimes A}\cdot T\kappa_{M,A}\\
&=a\cdot q_{M,TA}\cdot\mu_{M\otimes TA}\cdot T\kappa_{M,TA}\cdot T(1_{TM}\otimes T\eta_A)\\
&=a\cdot q_{M,TA}\cdot T(\xi\otimes 1_{TA})\ , 
\end{align*}
the existence and unicity of the required $\cC^\mT$-morphism $a_2$ follows. Moreover, 
\begin{align*}
a\cdot q_{M,TA}&=a\cdot q_{M,TA}\cdot T(1_M\otimes\eta_A)\cdot T(1_M\otimes a_1)\\
&=a_2\cdot q_{M,A}\cdot T(1_M\otimes a_1)\\
&=a_2\cdot(1_M\boxtimes a_1)\cdot q_{M,TA}\ ,
\end{align*}
so $a=a_2\cdot(1_M\boxtimes a_1)$ because $q_{M,TA}$ is epic. Let us verify that $a_2:M\boxtimes A\to A$ defines an action. For this, we use
\begin{align*}
a\cdot(1_M\boxtimes Ta)&=a_2\cdot(1_M\boxtimes a_1)\cdot (1_M\boxtimes Ta)\\
&=a_2\cdot(1_M\boxtimes a)\cdot (1_M\boxtimes (\xi\bowtie\mu_A))\\
&=a_2\cdot(1_M\boxtimes a_2)\cdot (1_M\boxtimes(1_M\boxtimes a_1))\cdot (1_M\boxtimes (\xi\bowtie\mu_A))
\intertext{and}
a\cdot\tilde{\mu}_A&=a_2\cdot(m\boxtimes 1_{A})\cdot (1_{M\boxtimes M}\boxtimes a_1)\cdot\overline{\alpha}_{M,M,TA}^\inv\cdot(1_M\boxtimes(\xi\bowtie\mu_A))\\
&=a_2\cdot(m\boxtimes 1_{A})\cdot\overline{\alpha}_{M,M,A}^\inv\cdot(1_M\boxtimes(1_M\boxtimes a_1))\cdot(1_M\boxtimes(\xi\bowtie\mu_A))\ .
\end{align*}
Moreover, for any $\mT$-algebra structure $b:TB\to B$, the $\cC^\mT$-morphisms $T(1_M\otimes b)$ is an epimorphism, so that $q_{M,A}\cdot T(1_M\otimes b)=(1_M\boxtimes b)\cdot q_{M,TA}$ implies that $(1_M\boxtimes b)$ is epic. Since $a\cdot(1_M\boxtimes Ta)=a\cdot\tilde{\mu}_A$, and $(1_M\boxtimes(1_M\boxtimes a_1))$, $(1_M\boxtimes(\xi\bowtie\mu_A))$ are both epic, we obtain
\[
a_2\cdot(1_M\boxtimes a_2)=a_2\cdot(m\boxtimes 1_A)\cdot\overline{\alpha}_{M,M,A}^\inv\ ,
\]
the first condition for $a_2$ to be an action. The second condition comes from
\[
a\cdot\tilde{\eta}_A=a_2\cdot(1_M\boxtimes a_1)\cdot(e\boxtimes 1_{TA})\cdot\overline{\lambda}_{TA}^\inv\cdot\eta_A=a_2\cdot(e\boxtimes 1_{A})\cdot\overline{\lambda}_{A}^\inv\cdot a_1\cdot\eta_A=a_2\cdot(e\boxtimes 1_{A})\cdot\overline{\lambda}_{A}^\inv
\]
with $a\cdot\tilde{\eta}_A=1_A$. Finally, any $(M\boxtimes\mT)$-algebra homomorphism $f:(A,a)\to(B,b)$ yields a $\mT$-algebra homomorphism $f:(A,a\cdot\tau_A)\to(B,b\cdot\tau_B)$ that is equivariant: one has
\[
f\cdot a_2\cdot(1_M\boxtimes a_1)=f\cdot a=b\cdot (1_M\boxtimes Tf)=b_2\cdot (1_M\boxtimes b_1)\cdot (1_M\boxtimes Tf)=b_2\cdot (1_M\boxtimes f)\cdot (1_M\boxtimes a_1)\ ,
\]
so that $f\cdot a_2=b_2\cdot (1_M\boxtimes f)$ because $(1_M\boxtimes a_1)$ is epic.

Hence, a $(M\boxtimes\mT)$-algebra $(A,a)$ yields a $\mT$-algebra $(A,a_1)$ with an action $a_2:M\boxtimes A\to A$, and $K((A,a_1),a_2)$ returns the original $(M\boxtimes\mT)$-algebra $(A,a)$, since $a=a_2\cdot(1_M\boxtimes a_1)$. 

Conversely, the $K$-image of a $\mT$-algebra $(A,a_1)$ with an action $a_2:M\boxtimes A\to A$ is a $(M\boxtimes\mT)$-algebra $(A,a_2\cdot(1_M\boxtimes a_1))$. Since
\[
a_2\cdot(1_M\boxtimes a_1)\cdot\tau_A=a_2\cdot (e\boxtimes 1_A)\cdot\overline{\lambda}_A^\inv\cdot a_1=a_1
\]
and the diagram 
\[
\xymatrix{T(M\otimes A)\ar[r]^-{1}\ar[d]_{T(1\otimes\eta)}&T(M\otimes A)\ar[rr]^-{q}&&M\boxtimes A\ar[d]^{a_2}\\
T(M\otimes TA)\ar[rr]^(0.55){q}\ar[ur]_(0.6){\ \,T(1\otimes a_1)}&&M\boxtimes TA\ar[ur]^(0.4){1\boxtimes a_1}\ar[r]^-{a}&A}
\]
commutes, one recuperates the original triplet $((A,a_1),a_2)$ from $(A,a_2\cdot(1_M\boxtimes a_1))$; that is, $K$ is an isomorphism.
\end{proof}

\begin{examples}\label{ex:ApplicThmAction}
\item\label{ex:ApplicThmAction:I} Any monoid $M$ in a monoidal category $(\cC,\otimes,E)$ yields an isomorphism $(\cC^\mId)^M\cong\cC^M\cong\cC^{M\otimes\mId}$. This immediate result is also the $\mT=\mId$ case of Theorem~\ref{thm:ActionsAreMonadic}.
\item\label{ex:ApplicThmAction:Ab} If $\mT=\mAb$ is the free abelian group monad and $M=R$ is a ring, the isomorphisms
\[
R\text{-}\Mod\cong\AbGrp^R\cong(\Set^\mAb)^R\cong\Set^{R\otimes_{\mathbb{Z}}\mAb}
\]
recall the classical monadicity of $R$-modules over $\Set$, and describe the free $R$-module over a set $X$ as $R\otimes_{\mathbb{Z}}\mathit{Ab}X$.
\item\label{ex:ApplicThmAction:P} For a quantale $V$, the category $\Sup^\V$ of $\V$-actions (Example~\ref{ex:Actions}\ref{ex:Actions:P}) is isomorphic to the category $\Set^{V\otimes\mP}$. The classical description of the tensor in $\Sup$ (see for example \cite{BanNel:76}) yields isomorphisms
\[
V\otimes PX\cong\Sup(\Sup(V,\Sup(PX,2)),2)\cong\Set(X,V)\ .
\]
The case where $V$ is integral was treated in \cite{PedTho:89}, where it is proved that $\Sup^\V$ is monadic over $\Set$.
\end{examples}


\subsection{Monad morphisms}
Suppose that $\mS$ and $\mT$ are monoidal monads on $(\cC,\otimes, E)$ that respectively induce monoidal categories $(\cC^\mS,\circledast,SE)$ and $(\cC^\mT,\boxtimes,TE)$. Proposition~\ref{prop:MonoidAndMonadMorphInducing} below shows that every pair of monoid homomorphism $f:N\to M$ and monoidal monad morphism $\phi:\mS\to\mT$ induces a monad morphism $(f,\phi):N\circledast\mS\to M\boxtimes\mT$, and thus a functor $(\cC^\mT)^M\to(\cC^\mS)^N$ between the respective categories of actions. The usual  \emph{restriction-of-scalars} functor between categories of modules then appears as the $\mS=\mT=\mAb$ instance of this result (Corollary~\ref{cor:RestScal}). 

\begin{lem}\label{lem:MonoidMonadTransportsMonoids}
Let $(\cC,\otimes,E)$ be a monoidal category, and $(\mS,\iota)$, $(\mT,\kappa)$ monoidal monads on $\cC$. Suppose that $\cC^\mT$ and $\cC^\mS$ both have tensors that make $(\cC^\mT,\boxtimes,TE)$ and $(\cC^\mS,\circledast,SE)$ monoidal categories with structures induced by those of $(\cC,\otimes,E)$.

If $\phi:\mS\to\mT$ is a monoidal monad morphism and $((N,\zeta),n,d)$ is a monoid in $\cC^\mT$, then $(N,\zeta\cdot\phi_N)$ with multiplication $n\cdot\overline{\phi}_{N,N}$ and unit $d\cdot\phi_{E}$ is a monoid in $\cC^\mS$.
\end{lem}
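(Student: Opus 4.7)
The plan is to invoke the standard principle that a lax monoidal functor sends monoids to monoids, here realized through the algebraic functor $\cC^\phi : \cC^\mT \to \cC^\mS$ induced by the monoidal monad morphism $\phi$.

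First, by Proposition~\ref{prop:MonadMorphMonoidal}, $\cC^\phi$ is monoidal with coherence morphisms $\overline{\phi}_{A,B} : \cC^\phi(A) \circledast \cC^\phi(B) \to \cC^\phi(A \boxtimes B)$ and $\phi_E : SE \to TE$. Since $\cC^\phi$ is the identity on underlying $\cC$-objects and on morphisms, one has $\cC^\phi(N,\zeta) = (N,\zeta\cdot\phi_N)$, $\cC^\phi(n) = n$, and $\cC^\phi(d) = d$. Thus the triple exhibited in the statement is exactly the image of $((N,\zeta), n, d)$ equipped with the canonical monoid structure transported along $\cC^\phi$, and the content of the lemma is the verification that this canonical structure satisfies the monoid axioms in $\cC^\mS$.

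To verify associativity of $n\cdot\overline{\phi}_{N,N}$ I would paste together three ingredients: the naturality of $\overline{\phi}$ against $n$ on one side of the pentagon, the coherence square relating $\overline{\phi}$ with the associators $\overline{\alpha}$ of $\cC^\mS$ and $\cC^\mT$ (established in the proof of Proposition~\ref{prop:MonadMorphMonoidal}), and the associativity of $n$ in $\cC^\mT$. For the two unit laws for $d\cdot\phi_E$, I would similarly combine the naturality of $\overline{\phi}$ with the triangles relating $\overline{\phi}$ to $\overline{\lambda}$ and $\overline{\rho}$ (again from Proposition~\ref{prop:MonadMorphMonoidal}) and the unit axioms for $d$ in $\cC^\mT$.

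The main obstacle is not conceptual but organizational: correctly assembling the pentagonal and unit-triangular diagrams so that each commutative sub-region is visibly one of the three types of ingredients above. Once the monoidality of $\cC^\phi$ from Proposition~\ref{prop:MonadMorphMonoidal} is brought to bear, the verification reduces to a routine diagram chase with no further input from the monad structure.
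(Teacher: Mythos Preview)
Your proposal is correct and follows essentially the same route as the paper: both invoke Proposition~\ref{prop:MonadMorphMonoidal} to obtain that $\cC^\phi$ is a (lax) monoidal functor with structure $(\overline{\phi},\phi_E)$, and then deduce the monoid axioms for $(N,\zeta\cdot\phi_N)$ from those of $((N,\zeta),n,d)$ together with the monoidality diagrams of $\cC^\phi$. The paper's proof is terser but makes the same three observations---$(N,\zeta\cdot\phi_N)$ is an $\mS$-algebra, the proposed multiplication and unit are $\cC^\mS$-morphisms, and the monoid diagrams commute by combining the $\cC^\mT$-monoid axioms with the coherence diagrams for $\cC^\phi$---so there is no substantive difference.
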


\begin{proof}
Since $(N,\zeta)$ is a $\mT$-algebra, $(N,\zeta\cdot\phi_N)$ is an $\mS$-algebra. The structures $d\cdot\phi_{E}:SE\to N$ and $n\cdot\overline{\phi}_{N,N}:N\circledast N\to N$ are $\cC^\mS$-morphisms (see  the proof of Proposition~\ref{prop:MonadMorphMonoidal}). Commutativity of the corresponding monoid diagrams in $(\cC^\mS,\circledast,SE)$ follows from commutativity of the monoid diagrams of $(N,n,d)$ in $(\cC^\mT,\boxtimes,TE)$ combined with commutativity of the diagrams showing that the functor $\cC^\phi:\cC^\mT\to\cC^\mS$ is monoidal with respect to $\overline{\phi}$ and $\phi_E$ (Proposition~\ref{prop:MonadMorphMonoidal}).
\end{proof}

\begin{prop}\label{prop:MonoidAndMonadMorphInducing}
Let $(\cC,\otimes,E)$ be a monoidal category, and $(\mS,\iota)$, $(\mT,\kappa)$ monoidal monads on $\cC$. Suppose that $\cC^\mT$ and $\cC^\mS$ both have tensors that make $(\cC^\mT,\boxtimes,TE)$ and $(\cC^\mS,\circledast,SE)$ monoidal categories with structures induced by those of $(\cC,\otimes,E)$.

If $\phi:\mS\to\mT$ is a monoidal monad morphism, and $f:N\to M$ a monoid homomorphism in $\cC^\mT$, then there is a monad morphism $(f,\phi):N\circledast\mS\to M\boxtimes\mT$ whose components at $X\in\ob\cC$ are given by the $\cC^\mS$-morphism $\overline{\phi}_{M,TX}\cdot(f\circledast\phi_{X}):N\circledast SX\to M\boxtimes TX$.
\end{prop}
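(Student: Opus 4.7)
The plan is to verify the three monad morphism conditions directly from the proposed formula, relying on the compatibility data of Proposition~\ref{prop:MonadMorphMonoidal} and Lemma~\ref{lem:MonoidMonadTransportsMonoids}. Write $\mS=(S,\nu,\delta)$ and $\mT=(T,\mu,\eta)$, and let $\zeta,\xi$ denote the $\mT$-algebra structures on $N,M$, and $(n,d),(m,e)$ their monoid structures in $\cC^\mT$.

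For well-definedness, note that under $\cC^\phi$ the morphism $f:N\to M$ is a $\cC^\mS$-morphism and $\phi_X:SX\to TX$ is a $\cC^\mS$-morphism into $(TX,\mu_X\cdot\phi_{TX})$. Hence $f\circledast\phi_X$ is a $\cC^\mS$-morphism, and composing with the $\cC^\mS$-morphism $\overline{\phi}_{M,TX}:\cC^\phi(M)\circledast\cC^\phi(TX)\to\cC^\phi(M\boxtimes TX)$ of Proposition~\ref{prop:MonadMorphMonoidal} yields $(f,\phi)_X$ as a $\cC^\mS$-morphism, and hence \emph{a fortiori} as a $\cC$-morphism. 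Naturality in $X$ is immediate from the naturality of $\phi$ and $\overline{\phi}$.

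For the unit equation $(f,\phi)_X\cdot\tilde{\delta}_X=\tilde{\eta}_X$, I expand using Proposition~\ref{prop:MonadAction} and the transported unit $d\cdot\phi_E:SE\to N$ from Lemma~\ref{lem:MonoidMonadTransportsMonoids}, then apply $f\cdot d=e$ (monoid homomorphism), the unit triangle for $\overline{\phi}$ from Proposition~\ref{prop:MonadMorphMonoidal} together with the naturality of $\overline{\lambda}$, and $\phi_X\cdot\delta_X=\eta_X$; this collapses the expression to $(e\boxtimes 1_{TX})\cdot\overline{\lambda}_{TX}^\inv\cdot\eta_X=\tilde{\eta}_X$.

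The main obstacle is the multiplication equation, which unfolds to an identity between two $\cC$-morphisms $N\circledast S(N\circledast SX)\to M\boxtimes TX$. Expanding both sides via Proposition~\ref{prop:MonadAction} and the transported multiplication $n\cdot\overline{\phi}_{N,N}$ on $N$ in $\cC^\mS$, I plan a diagram chase driven by four ingredients: (i) $f$ is a monoid homomorphism, so $f\cdot n=m\cdot(f\boxtimes f)$; (ii) the associativity hexagon of $\overline{\phi}$ with $\overline{\alpha}$ from Proposition~\ref{prop:MonadMorphMonoidal}, which transports the $\mS$-associator over to the $\mT$-associator through two applications of $\overline{\phi}$; (iii) the fact that $\overline{\phi}_{M,TX}$ is itself a $\cC^\mS$-morphism, which allows pulling the algebra structure $(\xi\bowtie\mu_X)\cdot\phi_{M\boxtimes TX}$ through $\overline{\phi}$; and (iv) the monoidal monad morphism equalities $\phi\cdot\nu=\mu\cdot T\phi\cdot\phi S$ and $\phi_{X\otimes Y}\cdot\iota_{X,Y}=\kappa_{X,Y}\cdot(\phi_X\otimes\phi_Y)$, which enter through the identifications $q_{SA,SB}=\nu_{A\otimes B}\cdot S\iota_{A,B}$ and $q_{TA,TB}=\mu_{A\otimes B}\cdot T\kappa_{A,B}$ of~\ref{conv:Tensor}. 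Coordinating the two coequalizer presentations of $\circledast$ and $\boxtimes$ so that these ingredients combine coherently is where I expect the calculation to require the most care.
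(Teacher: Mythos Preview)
Your proposal is correct and follows the paper's proof essentially step for step: well-definedness via $\cC^\phi$, the unit computation using $f\cdot d=e$ together with the unit triangle for $\overline{\phi}$, and the multiplication computation driven by your ingredients (i), (ii), (iii). The one difference is your ingredient (iv): the paper never descends to the coequalizer presentations $q_{SA,SB}=\nu\cdot S\iota$ and $q_{TA,TB}=\mu\cdot T\kappa$, but stays entirely at the abstract level of $\overline{\phi}$, using only its naturality and the monoidal-functor axioms from Proposition~\ref{prop:MonadMorphMonoidal} together with the fact that $(f,\phi)_X$ is a $\cC^\mS$-morphism; so (iv) is not needed and you can spare yourself that unfolding.
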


\begin{proof}
By Lemma~\ref{lem:MonoidMonadTransportsMonoids}, $N$ can be seen as a monoid in $(\cC^\mS,\circledast,SE)$, thus defining a monad $N\circledast\mS$. Via the functor $\cC^\phi:\cC^\mT\to\cC^\mS$, the arrows $f$ and $\phi_X$ are $\cC^\mS$-morphisms, so $f\circledast\phi_{X}$ is defined in $\cC^\mS$, and so is $\overline{\phi}_{M,TX}$ by Proposition~\ref{prop:MonadMorphMonoidal}. To verify that these components define a monad morphism, we use our usual notations $\mS=(S,\nu,\delta)$, $\mT=(T,\mu,\eta)$ for the monads, and $(N,n,d)$, $(M,m,e)$ for the monoids. Moreover, we let the context differentiate between the induced structure morphisms $\overline{\alpha}$, $\overline{\delta}$ and $\overline{\rho}$ of $\cC^\mT$ or $\cC^\mS$.

By using the definitions and properties of the involved morphisms, we compute
\begin{align*}
\overline{\phi}_{M,TX}\cdot(f\circledast\phi_{X})\cdot\tilde{\delta}_X
&=\overline{\phi}_{M,TX}\cdot(f\circledast\phi_{X})\cdot(d\cdot\phi_{E}\circledast 1_{SX})\cdot\overline{\lambda}_{SX}^\inv\cdot\delta_X\\
&=\overline{\phi}_{M,TX}\cdot(e\circledast 1_{TX})\cdot(\phi_{E}\circledast 1_{TX})\cdot(1_{SE}\circledast\phi_X)\cdot\overline{\lambda}_{SX}^\inv\cdot\delta_X\\
&=(e\boxtimes 1_{TX})\cdot\overline{\phi}_{TE,TX}\cdot(\phi_{E}\circledast 1_{TX})\cdot\overline{\lambda}_{TX}^\inv\cdot\phi_X\cdot\delta_X\\
&=(e\boxtimes 1_{TX})\cdot\overline{\lambda}_{TX}^\inv\cdot\eta_X=\tilde{\eta}_X\ .
\end{align*}
For the multiplications, we use that
\begin{align*}
\overline{\phi}_{M,TX}&\cdot(f\circledast\phi_{X})\cdot(n\cdot\overline{\phi}_{N,N}\circledast 1_{SX})\cdot\overline{\alpha}_{N,N,SX}^\inv\\
&=(m\cdot(f\boxtimes f)\boxtimes 1_{TX})\cdot\overline{\phi}_{N\boxtimes N,TX}\cdot(\overline{\phi}_{N,N}\circledast 1_{TX})\cdot\overline{\alpha}_{N,N,TX}^\inv\cdot(1_N\circledast(1_N\circledast\phi_X))\\
&=(m\cdot(f\boxtimes f)\boxtimes 1_{TX})\cdot\overline{\alpha}_{N,N,TX}^\inv\cdot\overline{\phi}_{N,N\boxtimes TX}\cdot(1_N\circledast\overline{\phi}_{N,TX})\cdot(1_N\circledast(1_N\circledast\phi_X))\\
&=(m\boxtimes 1_{TX})\cdot\overline{\alpha}_{M,M,TX}^\inv\cdot\overline{\phi}_{M,M\boxtimes TX}\cdot(1_M\circledast\overline{\phi}_{M,TX})\cdot(f\circledast(f\circledast\phi_X))
\intertext{and since $\overline{\phi}_{M,TX}\cdot(f\circledast\phi_X):N\circledast SX\to M\boxtimes TX$ is a $\cC^\mS$-morphism,}
\overline{\phi}_{M,M\boxtimes TX}&\cdot(1_M\circledast\overline{\phi}_{M,TX})\cdot(f\circledast(f\circledast\phi_X))\cdot(1_N\circledast(\zeta\cdot\phi_N\bowtie\nu_X))\\
&=\overline{\phi}_{M,M\boxtimes TX}\cdot(f\circledast((\xi\bowtie\mu_X)\cdot\phi_{M\circledast TX}))\cdot(1_N\circledast S(\overline{\phi}_{M,TX}\cdot(f\circledast\phi_X)))\\
&=(1_M\boxtimes(\xi\bowtie\mu_X))\cdot\overline{\phi}_{M,T(M\boxtimes TX)}\cdot(f\circledast\phi_{M\boxtimes TX})\cdot(1_N\circledast S(\overline{\phi}_{M,TX}\cdot(f\circledast\phi_X)))\ .
\end{align*}
Hence,
\begin{align*}
\overline{\phi}_{M,TX}\cdot(f\circledast\phi_{X})\cdot\tilde{\nu}_X&=\overline{\phi}_{M,TX}\cdot(f\circledast\phi_{X})\cdot(n\cdot\overline{\phi}_{N,N}\circledast 1_{SX})\cdot\overline{\alpha}_{N,N,SX}^\inv\cdot(1_N\circledast(\zeta\cdot\phi_N\bowtie\nu_X))\\
&=\tilde{\mu}_X\cdot\overline{\phi}_{M,T(M\boxtimes TX)}\cdot(f\circledast\phi_{M\boxtimes TX})\cdot(1_N\circledast S(\overline{\phi}_{M,TX}\cdot(f\circledast\phi_X)))\ ,
\end{align*}
so the components $\overline{\phi}_{M,TX}\cdot(f\circledast\phi_X)$ satisfy the two conditions for being a monad morphism.
\end{proof}

\begin{cor}[Restriction of scalars]\label{cor:RestScal}
Let $(\cC,\otimes,E)$ be a monoidal category, and $(\mT,\kappa)$ a monoidal monad on $\cC$. Suppose that $\cC^\mT$ has tensors that make $(\cC^\mT,\boxtimes,TE)$ a monoidal category with structure morphisms induced by those of $(\cC,\otimes,E)$. 

If $f:N\to M$ is a monoid homomorphism in $\cC^\mT$, then the monad morphism $(f,1_\mT):N\boxtimes\mT\to M\boxtimes\mT$ induces a functor $\cC^{(f,1_\mT)}:(\cC^\mT)^M\to(\cC^\mT)^N$.
\end{cor}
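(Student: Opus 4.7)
The strategy is to recognize this corollary as the special case $\mS=\mT$, $\phi=1_\mT$ of Proposition~\ref{prop:MonoidAndMonadMorphInducing}, followed by an appeal to Theorem~\ref{thm:ActionsAreMonadic} to pull the resulting algebraic functor back to the categories of actions.

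First I would observe that the identity natural transformation $1_\mT:\mT\to\mT$ is trivially a monoidal monad morphism: the square defining the monoidal condition (with $\iota=\kappa$ on both sides and identity components) commutes on the nose. Consequently, Proposition~\ref{prop:MonoidAndMonadMorphInducing}, applied with $\mS=\mT$, $\iota=\kappa$, $\circledast=\boxtimes$ and $\phi=1_\mT$, yields a monad morphism $(f,1_\mT):N\boxtimes\mT\to M\boxtimes\mT$ on $\cC$, whose components at $X\in\ob\cC$ are given by $\overline{1_\mT}_{M,TX}\cdot(f\boxtimes 1_{TX}):N\boxtimes TX\to M\boxtimes TX$ (where $\overline{1_\mT}_{M,TX}$ is in fact the identity on $M\boxtimes TX$, since the comparison morphism of Proposition~\ref{prop:MonadMorphMonoidal} for $\phi=1_\mT$ is forced to be the identity by its universal property).

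Next, every monad morphism $\psi:\mT_1\to\mT_2$ on a base $\cC$ induces an algebraic functor $\cC^\psi:\cC^{\mT_2}\to\cC^{\mT_1}$ acting as the identity on underlying $\cC$-objects and morphisms while precomposing the algebra structure with $\psi$. Taking $\psi=(f,1_\mT)$, this gives a functor $\cC^{(f,1_\mT)}:\cC^{M\boxtimes\mT}\to\cC^{N\boxtimes\mT}$. Composing with the isomorphisms of Theorem~\ref{thm:ActionsAreMonadic}, namely $(\cC^\mT)^M\cong\cC^{M\boxtimes\mT}$ and $(\cC^\mT)^N\cong\cC^{N\boxtimes\mT}$, we obtain the desired functor $(\cC^\mT)^M\to(\cC^\mT)^N$, which by slight abuse of notation we still denote $\cC^{(f,1_\mT)}$.

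There is no real obstacle here; the main point requiring a moment's attention is simply the verification that $1_\mT$ is monoidal and that its associated $\overline{1_\mT}$ is the identity, so that the components of $(f,1_\mT)$ reduce to the expected $f\boxtimes 1_{TX}$. The content of the corollary then amounts to packaging together Proposition~\ref{prop:MonoidAndMonadMorphInducing} and Theorem~\ref{thm:ActionsAreMonadic}, which is why it deserves the label of corollary rather than an independent proof.
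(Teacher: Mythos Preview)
Your proposal is correct and follows essentially the same approach as the paper: specialize Proposition~\ref{prop:MonoidAndMonadMorphInducing} to $\phi=1_\mT$, then invoke the isomorphisms of Theorem~\ref{thm:ActionsAreMonadic}. The additional remarks you make (that $1_\mT$ is monoidal and that $\overline{1_\mT}$ is the identity, so the components reduce to $f\boxtimes 1_{TX}$) are correct elaborations that the paper leaves implicit.
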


\begin{proof}
The monad morphism is the $\phi=1_\mT$ case of Proposition~\ref{prop:MonoidAndMonadMorphInducing}, and the result follows from the isomorphisms $(\cC^\mS)^N\cong\cC^{N\circledast\mS}$ and $(\cC^\mT)^M\cong\cC^{M\boxtimes\mT}$ (Theorem~\ref{thm:ActionsAreMonadic}).
\end{proof}

\subsection*{Acknowledgement}
I would like to heartily thank Fred Linton that inspired this work by his insightful Remark in \cite{Lin:69}, and followed it with a kind and enlightening e-mail correspondence more than 40 years later \cite{Lin:11}. Dirk Hofmann also helped spot an error in an earlier version of this work, and Martin Brandenburg provided useful comments and additional references: thanks to them, too.


\bibliographystyle{plain}

\end{document}